\newtheorem{theorem}{Theorem}[section]
\newtheorem{lemma}[theorem]{Lemma}
\newtheorem{proposition}[theorem]{Proposition}
\theoremstyle{definition}
\newtheorem{definition}[theorem]{Definition}
\newtheorem{example}[theorem]{Example}
\theoremstyle{remark}
\newtheorem{remark}[theorem]{Remark}
\numberwithin{equation}{section}
\begin{document}
	\setcounter{page}{1}
	\title{Controlled $K-g-$fusion frames in Hilbert $C^{\ast}-$modules}
	
	\author{ Fakhr-dine Nhari$^{1}$ and Mohamed Rossafi$^{2*}$}
	
	\address{$^{1}$Laboratory Analysis, Geometry and Applications Department of Mathematics, Faculty Of Sciences, University of Ibn Tofail, Kenitra, Morocco}
	\email{\textcolor[rgb]{0.00,0.00,0.84}{nharidoc@gmail.com}}
	
	\address{$^{2}$LaSMA Laboratory Department of Mathematics, Faculty of Sciences Dhar El Mahraz, University Sidi Mohamed Ben Abdellah, B. P. 1796 Fes Atlas, Morocco}
	\email{\textcolor[rgb]{0.00,0.00,0.84}{rossafimohamed@gmail.com; mohamed.rossafi@usmba.ac.ma}}
	
	\subjclass[2010]{42C15}
	
	\keywords{Controlled $K-g-$fusion frames in Hilbert $C^{\ast}-$modules}
	
	\begin{abstract}
		Controlled frames have been the subject of interest because of its ability to improve the numerical efficiency of iterative algorithms for inverting the frame operator.
		In this paper, we introduce the concepts of controlled $g-$fusion frame and controlled $K-g-$fusion frame in Hilbert  $C^{\ast}-$modules and we give some properties. Also, we study the pertubation problem of controlled $K-g-$fusion frame. Moreover, an illustrative examples is presented to support the obtained results.
	\end{abstract} 
\maketitle
\section{Introduction}
		Frames for Hilbert spaces were introduced by Duffin and Schaefer \cite{Duf} in 1952 to study some deep problems in nonharmonic Fourier series by abstracting the fundamental notion of Gabor \cite{Gab} for signal processing.
		
		Many generalizations of the concept of frame have been defined in Hilbert $C^{\ast}$-modules \cite{Kho2, Lar1, r1, r3, r4, r5, r6, r9}.

		Controlled frames in Hilbert spaces have been introduced by P. Balazs \cite{r10} to improve the numerical efficiency of iterative algorithms for inverting the frame operator.
		
		Rashidi and Rahimi \cite{r11} are introduced the concept of Controlled frames in Hilbert $C^{\ast}-$modules.
		
		The paper is organized in the following manner. In section 3, we introduced the notion of $g-$fusion frames and controlled $g-$fusion frames in Hilbert $C^{\ast}-$modules and estabilish  some  properties. Section 4 is devoted to introduce the concept of controlled $K-g-$fusion frames in Hilbert $C^{\ast}-$modules and gives some results, finally in section 5 we study the perturbation of controlled $K-g-$fusion frames.
		\section{Preliminaires}
		Let $\mathcal{A}$ be a unital $C^{\ast}-$algebra, let $J$ be countable index set. Throughout this paper $H$ and $L$ are countably generated Hilbert $\mathcal{A}-$modules and $\{H_{j}\}_{j\in J}$ is a sequence of submodules of $L$. For each $j\in J$,  $End_{\mathcal{A}}^{\ast}(H,H_{j})$ is the collection of all adjointable $\mathcal{A}-$linear maps from $H$ to $H_{j}$, and $ End_{\mathcal{A}}^{\ast}(H,H)$ is denoted by  $End_{\mathcal{A}}^{\ast}(H)$. Also let $GL^{+}(H)$ be the set of all positive bounded linear invertible operators on $H$ with bounded inverse.
		\begin{definition} \cite{Kap}
			Let $ \mathcal{A} $ be a unital $C^{\ast}$-algebra and $H$ be a left $ \mathcal{A} $-module, such that the linear structures of $\mathcal{A}$ and $H$ are compatible. $H$ is a pre-Hilbert $\mathcal{A}$-module if $H$ is equipped with an $\mathcal{A}$-valued inner product $\langle.,.\rangle :H\times H\rightarrow\mathcal{A}$, such that is sesquilinear, positive definite and respects the module action. In the other words,
			\begin{itemize}
				\item [(i)] $ \langle f,f\rangle\geq0 $ for all $ f\in H $ and $ \langle f,f\rangle=0$ if and only if $f=0$.
				\item [(ii)] $\langle af+g,h\rangle=a\langle f,h\rangle+\langle g,h\rangle$ for all $a\in\mathcal{A}$ and $f,g,h\in H$.
				\item[(iii)] $ \langle f,g\rangle=\langle g,f\rangle^{\ast} $ for all $f,g\in H$.
			\end{itemize}	 
			For $f\in H$, we define $||f||=||\langle f,f\rangle||^{\frac{1}{2}}$. If $H$ is complete with $||.||$, it is called a Hilbert $\mathcal{A}$-module or a Hilbert $C^{\ast}$-module over $\mathcal{A}$. For every $a$ in a $C^{\ast}$-algebra $\mathcal{A}$, we have $|a|=(a^{\ast}a)^{\frac{1}{2}}$ and the $\mathcal{A}$-valued norm on $H$ is defined by $|f|=\langle f, f\rangle^{\frac{1}{2}}$ for $f\in H$.\\
			Define $l^{2}(\{H_{j}\}_{j\in J})$ by
			\begin{equation*}
				l^{2}(\{H_{j}\}_{j\in J})=\{ \{f_{j}\}_{j\in J}:f_{j}\in H_{j},||\sum_{j\in \mathrm{J}}\langle f_{j},f_{j}\rangle||<\infty  \}.
			\end{equation*}
			With $\mathcal{A}-$valued inner product is given by 
			\begin{equation*}
				\langle \{f_{j}\}_{j\in J},\{g_{j}\}_{j\in J}\rangle = \sum_{j\in \mathrm{J}}\langle f_{j},g_{j}\rangle,
			\end{equation*}
			$l^{2}(\{H_{j}\}_{j\in J})$ is a Hilbert $\mathcal{A}-$module. 
		\end{definition}
		The following lemmas was used to proof our results:
		\begin{lemma}\cite{Deh}\label{ALDE}
			If $\phi:\mathcal{A}\rightarrow\mathcal{B}$ is a $\ast -$homomorphism between $C^{\ast}-$algebras, then $\phi$ is increasing, that is, if $a\leq b$, then $\phi(a)\leq\phi(b)$.
		\end{lemma}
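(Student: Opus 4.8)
The plan is to reduce the order relation to a positivity statement and then exploit that $\phi$ is linear, multiplicative and $\ast$-preserving. Recall that in a $C^{\ast}$-algebra the assertion $a\leq b$ means by definition that $b-a$ is a positive element, and that an element $x$ is positive precisely when it has a positive square root $x^{1/2}$ lying in the $C^{\ast}$-subalgebra generated by $x$ (equivalently, when $x=y^{\ast}y$ for some $y$ in the algebra), the square root being produced by the continuous functional calculus.

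First I would assume $a\leq b$ and set $c=(b-a)^{1/2}\in\mathcal{A}$, so that $c$ is self-adjoint and $c^{2}=b-a$. Applying $\phi$ and using multiplicativity together with $\phi(c)^{\ast}=\phi(c^{\ast})=\phi(c)$, I would compute
\[
\phi(b)-\phi(a)=\phi(b-a)=\phi(c^{2})=\phi(c)\,\phi(c)=\phi(c)^{\ast}\phi(c).
\]
Since every element of $\mathcal{B}$ of the form $z^{\ast}z$ is positive, this yields $\phi(b)-\phi(a)\geq 0$, i.e. $\phi(a)\leq\phi(b)$, which is exactly the claimed monotonicity. (Alternatively, one can bypass square roots entirely: writing $b-a=y^{\ast}y$ gives $\phi(b-a)=\phi(y^{\ast}y)=\phi(y)^{\ast}\phi(y)\geq 0$ immediately.)

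There is no genuine obstacle in this argument; the only point requiring a word of care is that the square root (or the factorization $y^{\ast}y$) of $b-a$ exists inside $\mathcal{A}$ itself, which is ensured by applying the continuous functional calculus within the unital $C^{\ast}$-algebra generated by $b-a$, passing to the unitization of $\mathcal{A}$ if necessary; this does not affect the computation, since $\phi$ extends canonically to a $\ast$-homomorphism between the unitizations and still preserves adjoints and products.
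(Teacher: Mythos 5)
The paper does not prove this lemma; it is quoted from the reference [Deh] and used as a black box, so there is no internal argument to compare against. Your proof is the standard one and is correct: reduce $a\leq b$ to positivity of $b-a$, factor $b-a=c^{\ast}c$ with $c=(b-a)^{1/2}$ obtained from the continuous functional calculus, and use that $\phi$ preserves products and adjoints so that $\phi(b)-\phi(a)=\phi(c)^{\ast}\phi(c)\geq 0$. Your cautionary remark about where the square root lives is also handled correctly (and is in fact even easier than you suggest, since $t\mapsto t^{1/2}$ vanishes at $0$ and so the functional calculus already produces $c$ inside the possibly non-unital $C^{\ast}$-subalgebra generated by $b-a$, with no unitization needed).
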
 
		\begin{lemma}\label{6} \cite{LJI}
			Let $T\in End_{\mathcal{A}}^{\ast}(H,L)$ and $H, L$ are Hilberts $\mathcal{A}-$modules.The following statemnts are multually equivalent:
			\begin{itemize}
				\item [(i)] $T$ is surjective.
				\item [(ii)] $T^{\ast}$ is bounded below with respect to the norm, i.e., there is $m>0$ such that $||T^{\ast}f|| \geq m||f||$ for all $f\in L$.
				\item [(iii)] $T^{\ast}$ is bounded below with respect to the inner product, i.e, there is $m^{'}>0$ such that $\langle T^{\ast}f,T^{\ast}f \rangle \geq m^{'}\langle f,f\rangle $ for  all $f\in L$.
			\end{itemize}
		\end{lemma}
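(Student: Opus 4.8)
The plan is to prove the cycle $(i)\Rightarrow(iii)\Rightarrow(ii)\Rightarrow(i)$, working inside the $C^{\ast}$-algebra $End_{\mathcal{A}}^{\ast}(L)$ and using the $\mathcal{A}$-valued inner product throughout.

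Start with $(iii)\Rightarrow(ii)$, which is immediate: from $0\leq m'\langle f,f\rangle\leq\langle T^{\ast}f,T^{\ast}f\rangle$ and monotonicity of the $C^{\ast}$-norm on the positive cone (equivalently, Lemma~\ref{ALDE} applied to a state, or to the norm itself), taking norms gives $m'\|f\|^{2}\leq\|T^{\ast}f\|^{2}$, i.e. $(ii)$ with $m=\sqrt{m'}$.

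Next, $(i)\Rightarrow(iii)$. If $T$ is surjective I claim $TT^{\ast}\in End_{\mathcal{A}}^{\ast}(L)$ is invertible. It is positive; it is injective, since $TT^{\ast}f=0$ forces $\langle T^{\ast}f,T^{\ast}f\rangle=\langle TT^{\ast}f,f\rangle=0$, hence $f\in\ker T^{\ast}=(\mathrm{Ran}\,T)^{\perp}=\{0\}$; and it is surjective, because (using that $\mathrm{Ran}\,T^{\ast}$ is then closed and $H=\overline{\mathrm{Ran}\,T^{\ast}}\oplus\ker T$) one has $\mathrm{Ran}(TT^{\ast})=T(\mathrm{Ran}\,T^{\ast})=T(H)=L$. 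A bijective adjointable operator has an adjointable bounded inverse, so putting $\mu:=\|(TT^{\ast})^{-1}\|$, functional calculus in $End_{\mathcal{A}}^{\ast}(L)$ gives $(TT^{\ast})^{-1}\leq\mu\,\mathrm{Id}_{L}$, equivalently $TT^{\ast}\geq\mu^{-1}\mathrm{Id}_{L}$; evaluating the positive operator $TT^{\ast}-\mu^{-1}\mathrm{Id}_{L}$ against $f$ yields $\langle T^{\ast}f,T^{\ast}f\rangle=\langle TT^{\ast}f,f\rangle\geq\mu^{-1}\langle f,f\rangle$, which is $(iii)$. (One may also read off $(i)\Rightarrow(ii)$ directly from the open mapping theorem: surjectivity of $T$ gives $c>0$ and, for each $g$, a preimage $f$ with $Tf=g$ and $\|f\|\leq c\|g\|$, so $\|g\|^{2}=\|\langle f,T^{\ast}g\rangle\|\leq\|f\|\,\|T^{\ast}g\|\leq c\|g\|\,\|T^{\ast}g\|$ by Cauchy--Schwarz for the $\mathcal{A}$-valued inner product.)

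It remains to prove $(ii)\Rightarrow(i)$, and this is the step I expect to be the real obstacle. From $\|T^{\ast}f\|\geq m\|f\|$ one gets at once that $T^{\ast}$ is injective and, by a Cauchy-sequence argument, that $\mathrm{Ran}\,T^{\ast}$ is closed. To conclude that $T$ is onto one needs the closed-range theorem for adjointable operators on Hilbert $C^{\ast}$-modules: closedness of $\mathrm{Ran}\,T^{\ast}$ is equivalent to closedness of $\mathrm{Ran}\,T$, and a closed-range adjointable operator has orthogonally complemented range, so $L=\mathrm{Ran}\,T\oplus(\mathrm{Ran}\,T)^{\perp}=\mathrm{Ran}\,T\oplus\ker T^{\ast}=\mathrm{Ran}\,T$. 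Unlike in the Hilbert space case, being bounded below does not on its own produce a complemented closed range in a $C^{\ast}$-module, so it is exactly here that one leans on the module machinery from the cited reference \cite{LJI}; the remaining steps are routine manipulations with the $C^{\ast}$-valued inner product and positivity in $End_{\mathcal{A}}^{\ast}(L)$.
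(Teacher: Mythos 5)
The paper does not actually prove this lemma: it is quoted verbatim from the reference \cite{LJI} (Aramba\v{s}i\'c) as background, so there is no in-paper argument to compare yours against. Judged on its own, your proof is correct and well organized. The cycle $(i)\Rightarrow(iii)\Rightarrow(ii)\Rightarrow(i)$ is a sensible decomposition: $(iii)\Rightarrow(ii)$ is indeed immediate from norm-monotonicity on the positive cone of $\mathcal{A}$; your $(i)\Rightarrow(iii)$ is essentially a proof of Lemma~\ref{3}(ii) of the paper (which is likewise only cited there), and the identification $\ker T^{\ast}=(\mathrm{Ran}\,T)^{\perp}$ together with $\langle TT^{\ast}f,f\rangle\geq\|(TT^{\ast})^{-1}\|^{-1}\langle f,f\rangle$ is exactly right. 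Most importantly, you correctly locate the one genuinely module-theoretic obstacle, in $(ii)\Rightarrow(i)$: bounded below gives closed range of $T^{\ast}$ by a routine Cauchy argument, but passing from that to surjectivity of $T$ requires the closed range theorem for adjointable operators (Mishchenko--Lance: closedness of $\mathrm{Ran}\,T^{\ast}$ is equivalent to closedness of $\mathrm{Ran}\,T$, and closed ranges of adjointable maps are orthogonally complemented), which is not automatic in Hilbert $C^{\ast}$-modules the way it is in Hilbert spaces. Since you invoke that theorem explicitly rather than pretending the Hilbert-space argument carries over, the proof has no gap; it simply outsources the same hard ingredient that \cite{LJI} itself relies on. One small economy you could have taken: once $(ii)\Rightarrow(i)$ is in hand, $(i)\Rightarrow(iii)$ also follows from Lemma~\ref{3}(ii) as stated in the paper, so you would not need to reprove the invertibility of $TT^{\ast}$ from scratch.
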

		
		\begin{lemma}\label{3} \cite{Deh} 
			Let $H$ and $L$ are two Hilbert $\mathcal{A}$-modules and $T\in End^{\ast}_{\mathcal{A}}(H,L)$. Then:
			\begin{itemize}
				\item [(i)] If $T$ is injective and $T$ has closed range, then the adjointable map $T^{\ast}T$ is invertible and $$\|(T^{\ast}T)^{-1}\|^{-1}\leq T^{\ast}T\leq\|T\|^{2}.$$
				\item  [(ii)]	If $T$ is surjective, then the adjointable map $TT^{\ast}$ is invertible and $$\|(TT^{\ast})^{-1}\|^{-1}\leq TT^{\ast}\leq\|T\|^{2}.$$
			\end{itemize}
		\end{lemma}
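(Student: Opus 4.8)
The plan is to deduce both parts from one elementary spectral fact: if $S$ is a positive, invertible element of the $C^{\ast}$-algebra $End^{\ast}_{\mathcal{A}}(H)$ (resp. $End^{\ast}_{\mathcal{A}}(L)$), then $\sigma(S)\subseteq[\,\|S^{-1}\|^{-1},\,\|S\|\,]$, since $\min\sigma(S)=\|S^{-1}\|^{-1}$ and $\max\sigma(S)=\|S\|$ for such $S$; feeding the functions $t\mapsto t-\|S^{-1}\|^{-1}$ and $t\mapsto \|S\|-t$, both nonnegative on $\sigma(S)$, into the continuous functional calculus yields
\[
\|S^{-1}\|^{-1}\,I\ \le\ S\ \le\ \|S\|\,I .
\]
Thus it is enough to prove, for (i), that $T^{\ast}T$ is positive and invertible with $\|T^{\ast}T\|=\|T\|^{2}$, and for (ii) the analogous statement for $TT^{\ast}$.

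For (i), positivity of $T^{\ast}T$ is automatic and $\|T^{\ast}T\|=\|T\|^{2}$ is the $C^{\ast}$-identity, so the work is in the invertibility, and here the range hypothesis enters. I would use that, $\operatorname{ran}(T)$ being closed, the adjoint $T^{\ast}$ also has closed range and both $\operatorname{ran}(T)$ and $\operatorname{ran}(T^{\ast})$ are orthogonally complemented, with $\ker T=\operatorname{ran}(T^{\ast})^{\perp}$ and $\ker T^{\ast}=\operatorname{ran}(T)^{\perp}$; injectivity of $T$ then forces $\operatorname{ran}(T^{\ast})=H$, i.e.\ $T^{\ast}$ is onto. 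Since $T^{\ast}$ kills $\ker T^{\ast}$, one gets $\operatorname{ran}(T^{\ast}T)=T^{\ast}(\operatorname{ran}T)=T^{\ast}(L)=H$, and $T^{\ast}Tf=0$ gives $\langle Tf,Tf\rangle=\langle T^{\ast}Tf,f\rangle=0$, hence $f=0$; so $T^{\ast}T$ is a bijective adjointable operator, therefore has a bounded adjointable inverse and lies in $GL^{+}(H)$. The two displayed bounds are then the spectral fact above applied to $S=T^{\ast}T$.

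For (ii), if $T$ is surjective then $\operatorname{ran}T=L$ is closed and $\ker T^{\ast}=(\operatorname{ran}T)^{\perp}=\{0\}$, so $T^{\ast}$ is injective with closed range; applying part (i) to $T^{\ast}$ shows $(T^{\ast})^{\ast}T^{\ast}=TT^{\ast}$ is invertible with $\|(TT^{\ast})^{-1}\|^{-1}\le TT^{\ast}\le\|T^{\ast}\|^{2}=\|T\|^{2}$. One could instead start from Lemma \ref{6}: surjectivity of $T$ provides $m'>0$ with $\langle T^{\ast}f,T^{\ast}f\rangle\ge m'\langle f,f\rangle$, i.e.\ $TT^{\ast}\ge m'I$, whence $0\notin\sigma(TT^{\ast})$ and $TT^{\ast}$ is invertible, and the two-sided estimate again follows from the functional calculus.

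I expect the only real obstacle to be the structural input invoked in (i)--(ii): the Hilbert $C^{\ast}$-module closed range theorem, namely that an adjointable operator with closed range has closed-range adjoint and yields the orthogonal decompositions $H=\ker T\oplus\operatorname{ran}(T^{\ast})$ and $L=\ker T^{\ast}\oplus\operatorname{ran}(T)$. This is precisely the place where closedness of the range is indispensable; granted that, everything else is the open mapping theorem together with the elementary spectral estimate recorded at the outset.
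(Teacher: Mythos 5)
The paper does not prove this lemma: it is quoted verbatim from \cite{Deh} as a known auxiliary result, so there is no in-text argument to compare yours against. Judged on its own, your proof is correct. The reduction of both displayed inequalities to the spectral estimate $\|S^{-1}\|^{-1}I\leq S\leq\|S\|I$ for a positive invertible element $S$ of the $C^{\ast}$-algebra $End^{\ast}_{\mathcal{A}}(H)$ is sound, the $C^{\ast}$-identity gives $\|T^{\ast}T\|=\|T\|^{2}$, and the invertibility claims are traced back to the right place, namely the closed range theorem for adjointable operators between Hilbert $C^{\ast}$-modules: closedness of $\operatorname{ran}T$ forces $\operatorname{ran}T^{\ast}$ to be closed and yields the orthogonal decompositions $H=\ker T\oplus\operatorname{ran}T^{\ast}$ and $L=\ker T^{\ast}\oplus\operatorname{ran}T$, from which injectivity of $T$ gives surjectivity of $T^{\ast}$ and hence bijectivity of $T^{\ast}T$. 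You correctly flag this as the one genuinely nontrivial input; since closed submodules of a Hilbert $C^{\ast}$-module need not be orthogonally complemented in general, this step cannot be replaced by naive Hilbert-space reasoning and needs an external citation (e.g.\ Lance's closed range theorem) for the proof to be fully self-contained. The derivation of (ii) by applying (i) to $T^{\ast}$, or alternatively via the lower bound $\langle T^{\ast}f,T^{\ast}f\rangle\geq m'\langle f,f\rangle$ of Lemma \ref{6}, is also correct.
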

		\begin{lemma}\cite{LJI}\label{11}
			Let $H$ be a Hilbert $\mathcal{A}$-module over a $C^{\ast}$-algebra $\mathcal{A}$, and $T\in End_{\mathcal{A}}^{\ast}(H)$ such that $T^{\ast}=T$. The following statements are equivalent:
			\begin{itemize}
				\item [(i)] $T$ is surjective.
				\item[(ii)] There are $m, M>0$ such that $m\|f\|\leq\|Tf\|\leq M\|f\|$, for all $f\in H$.
				\item[(iii)] There are $m', M'>0$ such that $m'\langle f,f\rangle\leq\langle Tf,Tf\rangle\leq M'\langle f,f\rangle$ for all $f\in H$.
			\end{itemize}
		\end{lemma}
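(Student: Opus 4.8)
The plan is to reduce everything to Lemma \ref{6} together with the automatic boundedness of adjointable operators, exploiting the hypothesis $T^{\ast}=T$ throughout.

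First I would dispose of the two \emph{upper} estimates, which in fact hold for \emph{every} self-adjoint $T\in End_{\mathcal{A}}^{\ast}(H)$ and carry no real content. Since $T$ is adjointable it is bounded, so $\|Tf\|\leq\|T\|\,\|f\|$ for all $f\in H$, which is the right-hand inequality in (ii) with $M=\|T\|$. For (iii), using $T^{\ast}=T$ one writes $\langle Tf,Tf\rangle=\langle T^{2}f,f\rangle$; the operator $T^{2}$ is positive and self-adjoint with $\|T^{2}\|=\|T\|^{2}$, hence $T^{2}\leq\|T\|^{2}I$, and applying this to $f$ gives $\langle Tf,Tf\rangle\leq\|T\|^{2}\langle f,f\rangle$, i.e. the right-hand inequality in (iii) with $M'=\|T\|^{2}$. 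So in all three items only the lower bound and the surjectivity statement actually interact.

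The substance is the equivalence of surjectivity with the two \emph{lower} bounds. Here I would apply Lemma \ref{6} with $L=H$ to the operator $T$ itself and then substitute $T^{\ast}=T$: Lemma \ref{6} states that $T$ is surjective iff $T^{\ast}$ is bounded below with respect to the norm iff $T^{\ast}$ is bounded below with respect to the inner product, and after the substitution these two conditions read precisely ``$\exists m>0:\ \|Tf\|\geq m\|f\|$'' and ``$\exists m'>0:\ \langle Tf,Tf\rangle\geq m'\langle f,f\rangle$''. Combining with the previous paragraph: (i) implies both lower bounds (and the upper bounds are free), while conversely each lower bound on its own already forces surjectivity by Lemma \ref{6}; this closes the cycle (i)$\Leftrightarrow$(ii)$\Leftrightarrow$(iii).

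The one point that requires care — and the reason one should not simply invoke ``equivalence of norms'' — is that in a Hilbert $C^{\ast}$-module the inner-product estimate $\langle Tf,Tf\rangle\geq m'\langle f,f\rangle$ is strictly stronger than the scalar estimate $\|Tf\|\geq\sqrt{m'}\,\|f\|$, so the implication (ii)$\Rightarrow$(iii) is \emph{not} routine; it must be routed through surjectivity via Lemma \ref{6} rather than proved directly. I expect this to be the main (mild) obstacle. An alternative to Lemma \ref{6} for the lower bounds would be to observe that a surjective self-adjoint $T$ has $T^{2}=TT^{\ast}$ invertible by Lemma \ref{3}(ii), hence $T$ is invertible, whence $\|f\|=\|T^{-1}Tf\|\leq\|T^{-1}\|\,\|Tf\|$ gives $m=\|T^{-1}\|^{-1}$, and $\langle f,f\rangle=\langle T^{-1}Tf,T^{-1}Tf\rangle\leq\|T^{-1}\|^{2}\langle Tf,Tf\rangle$ gives $m'=\|T^{-1}\|^{-2}$; but invoking Lemma \ref{6} directly is the shortest route.
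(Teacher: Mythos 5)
The paper does not actually prove this lemma: it is imported verbatim from the reference \cite{LJI} with no argument supplied, so there is no in-paper proof to compare against. Judged on its own, your derivation is correct and is the natural one. The two upper estimates are indeed vacuous for any adjointable $T$ (the inner-product one is exactly the Paschke inequality the paper quotes as a separate lemma, so you could cite that instead of re-deriving $T^{2}\leq\|T\|^{2}I$), and the whole content of the equivalence collapses onto Lemma \ref{6} applied with $L=H$ once you substitute $T^{\ast}=T$: surjectivity of $T$ is equivalent to each of the two lower bounds, and each lower bound separately forces surjectivity, which closes the cycle without ever having to prove (ii)$\Rightarrow$(iii) directly. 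Your remark that the inner-product lower bound is genuinely stronger than the norm lower bound in a Hilbert $C^{\ast}$-module, so that the equivalence must be routed through surjectivity, is exactly the right point of caution and is what makes the detour through Lemma \ref{6} necessary rather than cosmetic. The alternative route you sketch via Lemma \ref{3}(ii) also works ($T^{2}=TT^{\ast}$ invertible forces $T$ bijective, hence adjointably invertible, and the bounds follow with $m=\|T^{-1}\|^{-1}$, $m'=\|T^{-1}\|^{-2}$), but as you say it is longer; the only step there worth spelling out is that invertibility of $T^{2}$ really does give injectivity and surjectivity of $T$ separately.
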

		\begin{lemma} \cite{Pas}
			Let $H$ be a Hilbert $\mathcal{A}$-module. If $T\in End_{\mathcal{A}}^{\ast}(H)$, then 
			\begin{equation*}
				\langle Tf,Tf\rangle\leq\|T\|^{2}\langle f,f\rangle,\qquad \forall f\in H.
			\end{equation*}
		\end{lemma}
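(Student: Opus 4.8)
The plan is to exploit the fact that $End_{\mathcal{A}}^{\ast}(H)$ is itself a (unital) $C^{\ast}$-algebra, so that the order structure and the continuous functional calculus available in any $C^{\ast}$-algebra apply to $T^{\ast}T$, and then to transfer the resulting operator inequality to an inequality between $\mathcal{A}$-valued inner products by extracting a square root.

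First I would observe that $S:=T^{\ast}T$ is a positive element of the $C^{\ast}$-algebra $End_{\mathcal{A}}^{\ast}(H)$, since it is self-adjoint and of the form $R^{\ast}R$. By the $C^{\ast}$-identity in $End_{\mathcal{A}}^{\ast}(H)$ one has $\|S\|=\|T^{\ast}T\|=\|T\|^{2}$. Since every positive element $a$ of a unital $C^{\ast}$-algebra satisfies $a\leq\|a\|\,1$ (its spectrum lies in $[0,\|a\|]$), it follows that
\begin{equation*}
    \|T\|^{2}I-T^{\ast}T\geq 0 \quad \text{in } End_{\mathcal{A}}^{\ast}(H).
\end{equation*}

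Next I would take the positive square root: put $B:=\left(\|T\|^{2}I-T^{\ast}T\right)^{1/2}\in End_{\mathcal{A}}^{\ast}(H)$, which exists by the functional calculus and satisfies $B^{\ast}=B$ and $B^{2}=\|T\|^{2}I-T^{\ast}T$. Then for every $f\in H$,
\begin{equation*}
    \|T\|^{2}\langle f,f\rangle-\langle Tf,Tf\rangle=\langle(\|T\|^{2}I-T^{\ast}T)f,f\rangle=\langle B^{2}f,f\rangle=\langle Bf,Bf\rangle\geq 0,
\end{equation*}
where the last inequality is axiom (i) of the $\mathcal{A}$-valued inner product. Rearranging yields $\langle Tf,Tf\rangle\leq\|T\|^{2}\langle f,f\rangle$, as required.

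The only delicate point is the passage from the operator inequality $\|T\|^{2}I-T^{\ast}T\geq 0$ in the $C^{\ast}$-algebra $End_{\mathcal{A}}^{\ast}(H)$ to the pointwise inequality in $\mathcal{A}$; this is precisely what the square-root device takes care of, via the identity $\langle B^{\ast}Bf,f\rangle=\langle Bf,Bf\rangle$. Everything else is standard $C^{\ast}$-algebraic order calculus, so I do not anticipate any serious obstacle.
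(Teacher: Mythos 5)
The paper does not prove this lemma; it is quoted verbatim from Paschke \cite{Pas}, so there is no in-paper argument to compare against. Your proof is correct and is essentially the classical one: $T^{\ast}T$ is positive in the $C^{\ast}$-algebra $End_{\mathcal{A}}^{\ast}(H)$ with $\|T^{\ast}T\|=\|T\|^{2}$, hence $\|T\|^{2}I-T^{\ast}T\geq 0$, and the self-adjoint square root $B$ of this element transfers the algebra-level inequality to the $\mathcal{A}$-valued one via $\|T\|^{2}\langle f,f\rangle-\langle Tf,Tf\rangle=\langle B^{2}f,f\rangle=\langle Bf,Bf\rangle\geq 0$. The only points worth making explicit are that the functional calculus keeps $B$ inside $End_{\mathcal{A}}^{\ast}(H)$ (so $B$ is adjointable with $B^{\ast}=B$, which is what legitimizes $\langle B^{2}f,f\rangle=\langle Bf,B^{\ast}f\rangle=\langle Bf,Bf\rangle$), and that the final positivity is axiom (i) of the $\mathcal{A}$-valued inner product; you address both, so there is no gap.
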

		\begin{lemma}\cite{Fang}\label{5}
		Let $E,H$ and $L$ be Hilbert $\mathcal{A}-$modules, $T\in End_{\mathcal{A}}^{\ast}(E,L)$ and $T^{'}\in End_{\mathcal{A}}^{\ast}(H,L)$. Then the following two statements are equivalent:
		\begin{itemize}
			\item[(1)] $T^{'}(T^{'})^{\ast}\leq \lambda TT^{\ast}$ for some $\lambda>0$;
			\item[(2)] There exists $\mu>0$ such that $\|(T^{'})^{\ast}z\|\leq \mu\|T^{\ast}z\|$ for all $z\in L$.
		\end{itemize}
		\end{lemma}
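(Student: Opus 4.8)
The two implications are of very different character, so I would handle them separately.

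For $(1)\Rightarrow(2)$, the plan is simply to unfold the operator inequality pointwise. For each $z\in L$ the relation $T'(T')^{\ast}\leq\lambda TT^{\ast}$ gives $\langle T'(T')^{\ast}z,z\rangle\leq\lambda\langle TT^{\ast}z,z\rangle$, i.e.\ $\langle (T')^{\ast}z,(T')^{\ast}z\rangle\leq\lambda\langle T^{\ast}z,T^{\ast}z\rangle$ in $\mathcal{A}$. Since the $C^{\ast}$-norm is monotone on positive elements, applying $\|\cdot\|$ yields $\|(T')^{\ast}z\|^{2}\leq\lambda\|T^{\ast}z\|^{2}$, so $\mu=\sqrt{\lambda}$ works. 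This direction is routine.

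For $(2)\Rightarrow(1)$, which is the substantive direction, the obstacle is exactly that (2) is a scalar inequality between $C^{\ast}$-norms while (1) is an order inequality between $\mathcal{A}$-valued quantities, and $\|a\|\leq\|b\|$ does \emph{not} imply $a\leq b$ for positive $a,b$. My plan is: first fix $z\in L$ and set $p=\langle (T')^{\ast}z,(T')^{\ast}z\rangle$ and $q=\langle T^{\ast}z,T^{\ast}z\rangle$, both positive in $\mathcal{A}$; the goal reduces to proving $p\leq\mu^{2}q$. Next, feed $az$ (for arbitrary $a\in\mathcal{A}$) into hypothesis (2) and use that $(T')^{\ast},T^{\ast}$ are $\mathcal{A}$-linear, so $(T')^{\ast}(az)=a(T')^{\ast}z$, etc.; this turns (2) into $\|apa^{\ast}\|\leq\mu^{2}\|aqa^{\ast}\|$ for all $a$, hence (replacing $a$ by $a^{\ast}$) $\|a^{\ast}pa\|\leq\mu^{2}\|a^{\ast}qa\|$ for every $a\in\mathcal{A}$ — a statement now living purely inside the $C^{\ast}$-algebra $\mathcal{A}$. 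Then, for $\varepsilon>0$, insert the resolvent-type element $a=(q+\varepsilon 1)^{-1/2}\in\mathcal{A}$ (here unitality of $\mathcal{A}$ is used): by functional calculus $a^{\ast}qa=q(q+\varepsilon1)^{-1}$ has norm $\leq 1$, so the inequality forces $\|(q+\varepsilon1)^{-1/2}p(q+\varepsilon1)^{-1/2}\|\leq\mu^{2}$; this element being positive, it is $\leq\mu^{2}1$, and conjugating by $(q+\varepsilon1)^{1/2}$ gives $p\leq\mu^{2}(q+\varepsilon1)$. Letting $\varepsilon\to 0$ yields $p\leq\mu^{2}q$. Finally, since $z$ was arbitrary, $\langle T'(T')^{\ast}z,z\rangle\leq\mu^{2}\langle TT^{\ast}z,z\rangle$ for all $z$, so $T'(T')^{\ast}\leq\mu^{2}TT^{\ast}$ and $\lambda=\mu^{2}$ works.

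The step I expect to require the most care is the passage to a purely $C^{\ast}$-algebraic statement followed by the choice of test element: one must notice that $\mathcal{A}$-linearity lets the scalar estimate be tested against all $az$, and then that the correct probe is $(q+\varepsilon1)^{-1/2}$ rather than, say, an approximate unit (which would only recover $\|p\|\leq\mu^{2}\|q\|$). Everything else is bookkeeping with monotonicity of the $C^{\ast}$-norm and elementary properties of the order on positive elements of $\mathcal{A}$.
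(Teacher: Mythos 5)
The paper does not prove this statement: it is quoted as Lemma~\ref{5} directly from \cite{Fang} with no argument given, so there is nothing internal to compare against. Your proof is correct and is essentially the standard argument from that reference: the easy direction by monotonicity of the norm on positive elements of $\mathcal{A}$, and the converse by testing the scalar inequality on $az$ to get $\|a p a^{\ast}\|\leq\mu^{2}\|a q a^{\ast}\|$, probing with $a=(q+\varepsilon 1)^{-1/2}$, and letting $\varepsilon\to 0$ using closedness of the positive cone; the only conventions to check (left-module $\mathcal{A}$-linearity giving $\langle af,af\rangle=a\langle f,f\rangle a^{\ast}$, and unitality of $\mathcal{A}$ for the resolvent element) are consistent with the paper's setup.
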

		
		\begin{lemma}\cite{rfn1}\label{9}
			Let $\{W_{j}\}_{j\in J}$ be a sequence of orthogonally complemented closed submodules of $H$ and $T\in End_{\mathcal{A}}^{\ast}(H)$ invertible, if $T^{\ast}TW_{j}\subset  W_{j}$ for each $j \in J$, then $\{TW_{j}\}_{j\in J}$ is a sequence of orthogonally complemented closed submodules and $P_{W_{j}}T^{\ast}=P_{W_{j}}T^{\ast}P_{TW_{j}}$.
		\end{lemma}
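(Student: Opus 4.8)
The strategy is to exhibit an explicit orthogonal projection onto $TW_{j}$ and then deduce the intertwining identity from a short orthogonality argument. The algebraic fact that drives everything is that, since $T^{\ast}T$ is self-adjoint and $T^{\ast}TW_{j}\subset W_{j}$, it also satisfies $T^{\ast}TW_{j}^{\perp}\subset W_{j}^{\perp}$: for $x\in W_{j}^{\perp}$ and $w\in W_{j}$ one has $\langle T^{\ast}Tx,w\rangle=\langle x,T^{\ast}Tw\rangle=0$ because $T^{\ast}Tw\in W_{j}$. Hence $T^{\ast}T$ commutes with $P_{W_{j}}$; since $T$ is invertible, $T^{\ast}T\in GL^{+}(H)$, so $(T^{\ast}T)^{-1}$ exists, and a commuting pair stays commuting under inversion, so $(T^{\ast}T)^{-1}$ commutes with $P_{W_{j}}$ as well.

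I would then set $Q_{j}:=TP_{W_{j}}(T^{\ast}T)^{-1}T^{\ast}$ and verify the three defining features of the orthogonal projection onto $TW_{j}$: self-adjointness $Q_{j}^{\ast}=Q_{j}$ (from $P_{W_{j}}(T^{\ast}T)^{-1}=(T^{\ast}T)^{-1}P_{W_{j}}$), idempotency $Q_{j}^{2}=Q_{j}$ (collapsing the middle block via $(T^{\ast}T)^{-1}(T^{\ast}T)=I$ and $P_{W_{j}}^{2}=P_{W_{j}}$), and $\mathrm{Ran}(Q_{j})=TW_{j}$ (the inclusion $\subset$ is trivial, while $Q_{j}(Tw)=TP_{W_{j}}(T^{\ast}T)^{-1}(T^{\ast}T)w=Tw$ for every $w\in W_{j}$ gives $\supset$). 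As the range of a self-adjoint idempotent, $TW_{j}$ is then automatically closed and orthogonally complemented, with $P_{TW_{j}}=Q_{j}$; applying this for every index yields the first assertion.

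For the identity $P_{W_{j}}T^{\ast}=P_{W_{j}}T^{\ast}P_{TW_{j}}$, I would decompose $I=P_{TW_{j}}+(I-P_{TW_{j}})$ and note that $T^{\ast}$ maps $(TW_{j})^{\perp}$ into $W_{j}^{\perp}$: for $y\in(TW_{j})^{\perp}$ and $w\in W_{j}$, $\langle T^{\ast}y,w\rangle=\langle y,Tw\rangle=0$. Hence $P_{W_{j}}T^{\ast}(I-P_{TW_{j}})=0$, which is exactly the claimed identity. One can also substitute $P_{TW_{j}}=Q_{j}$ and compute directly, again using that $(T^{\ast}T)^{-1}$ commutes with $P_{W_{j}}$.

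The one genuinely delicate point is the orthogonal complementability of $TW_{j}$: unlike in Hilbert spaces, a closed submodule of a Hilbert $C^{\ast}$-module need not be orthogonally complemented, so one cannot simply take orthogonal complements. The hypothesis $T^{\ast}TW_{j}\subset W_{j}$ is precisely what makes the candidate $Q_{j}$ self-adjoint, so I expect that self-adjointness check (hence the recognition that $Q_{j}$ really is $P_{TW_{j}}$) to be the crux; everything else is routine bookkeeping. A more functional-analytic alternative uses the polar decomposition $T=U|T|$ with $U$ unitary (valid since $T$ is invertible): $|T|$ and $|T|^{-1}$ preserve $W_{j}$ as norm limits of polynomials in $T^{\ast}T$, so $TW_{j}=UW_{j}$ is orthogonally complemented with projection $UP_{W_{j}}U^{\ast}$; but the explicit-projection route avoids the continuous functional calculus and is more self-contained.
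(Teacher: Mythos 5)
The paper does not actually prove this lemma: it is quoted verbatim from the reference \cite{rfn1} and used as a black box, so there is no in-paper argument to compare yours against. Judged on its own, your proof is correct and complete. The key observations all check out: $T^{\ast}T$ commutes with $P_{W_{j}}$ because it preserves both $W_{j}$ and $W_{j}^{\perp}$ (the latter by self-adjointness), hence so does $(T^{\ast}T)^{-1}$; the candidate $Q_{j}=TP_{W_{j}}(T^{\ast}T)^{-1}T^{\ast}$ is then a self-adjoint adjointable idempotent with range exactly $TW_{j}$, and the range of such an idempotent is automatically closed and orthogonally complemented in a Hilbert $C^{\ast}$-module (via $f=Q_{j}f+(I-Q_{j})f$ and $\ker Q_{j}=(\operatorname{Ran}Q_{j})^{\perp}$), which is indeed the delicate point you correctly flag, since closed submodules need not be complemented in this setting. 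Your derivation of $P_{W_{j}}T^{\ast}=P_{W_{j}}T^{\ast}P_{TW_{j}}$ from $T^{\ast}\big((TW_{j})^{\perp}\big)\subset W_{j}^{\perp}$ is clean and, notably, uses only the existence of $P_{TW_{j}}$ rather than its explicit formula. The alternative polar-decomposition route you sketch is also valid (with $UU^{\ast}=T(T^{\ast}T)^{-1}T^{\ast}=I$ following from invertibility of $T$), though, as you say, the explicit projection $Q_{j}$ makes the argument more self-contained.
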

		\section{Controlled $g-$fusion frame in Hilbert $C^{\ast}-$modules}
		
		Firstly we give the definition of $g-$fusion frame in Hilbert $C^{\ast}-$modules.
		\begin{definition}\cite{rfn1}
			Let $\{W_{j}\}_{j\in J}$ be a sequence of closed submodules orthogonally complemented of $H$, $\{v_{j}\}_{j\in J}$ be a family of weights in $\mathcal{A}$, ie., each $v_{j}$ is positive invertible element frome the center of $\mathcal{A}$ and $\Lambda_{j}\in End_{\mathcal{A}}^{\ast}(H,H_{j})$ for each $j\in J$.  We say that $\Lambda=\{W_{j},\Lambda_{j},v_{j}\}_{j\in J}$ is a $g-$fusion frame for $H$ if there exists $0<A\leq B<\infty$ such that 
			\begin{equation}
				A\langle f,f \rangle \leq \sum_{j\in J}v_{j}^{2}\langle \Lambda_{j}P_{W_{j}}f,\Lambda_{j}P_{W_{j}}f\rangle\leq B\langle f,f\rangle,\qquad \forall f\in H.
			\end{equation}
			The constants  $A$ and $B$ are called the lower and upper bounds of the  $g-$fusion frame, respectively. If $A=B$ then $\Lambda$ is called tight $g-$fusion frame and if $A=B=1$ then we say $\Lambda$ is a Parseval $g-$fusion frame.
			
			The operator $S:H\rightarrow H$ defined by
			\begin{equation*}
				Sf=\sum_{j\in J}v_{j}^{2}P_{W_{j}}\Lambda_{j}^{\ast}\Lambda_{j}P_{W_{j}}f,\qquad \forall f\in H.
			\end{equation*}
			Is called $g-$fusion frame operator. 
		\end{definition}
		Now we define the notion of $(C,C^{'})-$controlled $g-$fusion frame in Hilbert $C^{\ast}-$modules.
		\begin{definition}
			Let $C$, $C^{'}\in GL^{+}(H)$, $\{W_{j}\}_{j\in J}$ be a sequence of closed submodules orthogonally complemented of $H$, $\{v_{j}\}_{j\in J}$ be a family of weights in $\mathcal{A}$, i.e., each $v_{j}$ is a positive invertible element frome the center of $\mathcal{A}$ and $\Lambda_{j}\in End_{\mathcal{A}}^{\ast}(H,H_{j})$ for each $j\in J$. We say that $\Lambda_{CC^{'}}=\{W_{j},\Lambda_{j},v_{j}\}_{j\in J}$ is a $(C,C^{'})-$controlled $g-$fusion frame for $H$ if there exists $0<A\leq B<\infty$ such that 
			\begin{equation}\label{eq1}
				A\langle f,f \rangle \leq \sum_{j\in J}v_{j}^{2}\langle \Lambda_{j}P_{W_{j}}Cf,\Lambda_{j}P_{W_{j}}C^{'}f\rangle\leq B\langle f,f\rangle,\qquad \forall f\in H.
			\end{equation}
			The constants $A$ and $B$ are called the lower and upper bounds of the  $(C,C^{'})-$controlled $g-$fusion frame, respectively.
			When $A=B$, the sequence $\Lambda_{CC^{'}}=\{W_{j},\Lambda_{j},v_{j}\}_{j\in J}$ is called  $(C,C^{'})-$controlled tight $g-$fusion frame, and when $A=B=1$, it is called a $(C,C^{'})-$controlled Parseval $g-$fusion frame. If only upper inequality of \eqref{eq1} hold, then $\Lambda_{CC^{'}}$ is called an $(C,C^{'})-$controlled $g-$fusion bessel sequence for $H$. 
		\end{definition} 
		\begin{example}
			Let $l^{\infty}$ be the set of all bounded complex-valued sequences. For any $u=\{u_{j}\}_{j\in \mathbb{N}}$, $v=\{v_{j}\}_{j\in \mathbb{N}}\in l^{\infty}$, we have 
			\begin{equation*}
				uv=\{u_{j}v_{j}\}_{j\in \mathbb{N}}, u^{\ast}=\{\overline{u_{j}}\}_{j\in \mathbb{N}}, ||u||=\sup_{j\in\mathbb{N}}|u_{j}|.
			\end{equation*}
			Then $\mathcal{A}=\{l^{\infty}, ||.||\}$ is a $C^{\ast}-$algebra. 
			
			Let $H=C_{0}$ be the set of all sequences converging to zero. For any $u$, $v\in H$ we define 
			\begin{equation*}
				\langle u,v\rangle=uv^{\ast}=\{u_{j}\overline{v_{j}}\}_{j\in\mathbb{N}}.
			\end{equation*}
			Then $H$ is a Hilbert $\mathcal{A}-$module.
			
			Now let $\{e_{j}\}_{j\in\mathbb{N}}$ be the standard orthonormal basis of $H$. 
			
			We construct $H_{j}=\overline{span}\{e_{1},e_{2},...,e_{j}\}$ and $W_{j}=\overline{span}\{e_{j}\}$ for each $j\in\mathbb{N}$.\\
			Define $\Lambda_{j}:H\rightarrow H_{j}$ by $\Lambda_{j}(f)=\sum_{k=1}^{j}\langle f,\frac{e_{j}}{\sqrt{j}}\rangle e_{k}$.\\
			The adjoint operator $\Lambda_{j}^{\ast}:H_{j}\rightarrow H$ define by $\Lambda_{j}^{\ast}(g)=\sum_{k=1}^{j}\langle g,\frac{e_{k}}{\sqrt{j}}\rangle e_{j}$.\\
			And the projection orthogonal $P_{W_{j}}$ define by $P_{W_{j}}(f)=\langle f,e_{j}\rangle e_{j}$.\\
			Let us define $Cf=2f$ and $C^{'}f=\frac{1}{2}f$. Then for any $f\in H$, we have 
			\begin{align*}
				\langle \Lambda_{j}P_{W_{j}}Cf,\Lambda_{j}P_{W_{j}}C^{'}f\rangle&=\langle \frac{2}{\sqrt{j}}\langle f,e_{j}\rangle\sum_{k=1}^{j}e_{k},\frac{1}{2\sqrt{j}}\langle f,e_{j}\rangle\sum_{k=1}^{j}e_{k}\rangle\\&=\frac{1}{j}\langle f, e_{j}\rangle\langle e_{j},f\rangle\langle \sum_{k=1}^{j}e_{k},\sum_{k=1}^{j}e_{k}\rangle\\&=\frac{1}{j}\langle f, e_{j}\rangle\langle e_{j},f\rangle\sum_{k=1}^{j}||e_{k}||^{2}\\&=\frac{1}{j}\langle f, e_{j}\rangle\langle e_{j},f\rangle j\\&=\langle f, e_{j}\rangle\langle e_{j},f\rangle.
			\end{align*}
			Therefore, for each $f\in H$, 
			\begin{equation*}
				\sum_{j\in\mathbb{N}}\langle \Lambda_{j}P_{W_{j}}Cf,\Lambda_{j}P_{W_{j}}C^{'}f\rangle=\sum_{j\in\mathbb{N}}\langle f, e_{j}\rangle\langle e_{j},f\rangle=\langle f,f\rangle.
			\end{equation*}
			Hence $\{W_{j},\Lambda_{j},1\}_{j\in\mathbb{N}}$ is a $(C,C^{'})-$controlled Parseval $g-$fusion frame for $H$.
		\end{example}
		Suppose that $\Lambda_{CC^{'}}$ be a $(C,C^{'})-$controlled $g-$fusion bessel sequence for $H$. The bounded linear operator $T_{(C,C^{'})}:l^{2}(\{H_{j}\}_{j\in J})\rightarrow H$ define by 
		\begin{equation}
			T_{(C,C^{'})}(\{f_{j}\}_{j\in J})=\sum_{j\in J}v_{j}(CC^{'})^{\frac{1}{2}}P_{W_{j}}\Lambda_{j}^{\ast}f_{j},\qquad \forall \{f_{j}\}_{j\in J}\in l^{2}(\{H_{j}\}_{j\in J}).
		\end{equation}
		is called the synthesis operator for the $(C,C^{'})-$controlled $g-$fusion frame $\Lambda_{CC^{'}}$.\\
		The adjoint operator $T_{(C,C^{'})}^{\ast}:H\rightarrow l^{2}(\{H_{j}\}_{j\in J})$ given by 
		\begin{equation}\label{eq4}
			T_{(C,C^{'})}^{\ast}(g)=\{v_{j}\Lambda_{j}P_{W_{j}}(C^{'}C)^{\frac{1}{2}}g\}_{j\in J}
		\end{equation}
		is called the analysis operator for the $(C,C^{'})-$controlled $g-$fusion frame $\Lambda_{CC^{'}}$.\\
		When  $C$ and $C^{'}$ commute with each other, and commute with the operator $P_{W_{j}}\Lambda_{j}^{\ast}\Lambda_{j}P_{W_{j}}$, for each $j\in J$, then the $(C,C^{'})-$controlled $g-$fusion frame operator $S_{(C,C^{'})}:H\rightarrow H$ is defined as 
		\begin{equation}
			S_{(C,C^{'})}(f)=T_{(C,C^{'})}T_{(C,C^{'})}^{\ast}(f)=\sum_{j\in J}v_{j}^{2}C^{'}P_{W_{j}}\Lambda_{j}^{\ast}\Lambda_{j}P_{W_{j}}Cf,\qquad \forall f\in H.
		\end{equation}
		And we have 
		\begin{equation}\label{eq12}
			\langle S_{(C,C^{'})}(f),f\rangle =\sum_{j\in J}v_{j}^{2}\langle\Lambda_{j}P_{W_{j}}Cf,\Lambda_{j}P_{W_{j}}C^{'}f\rangle,\qquad\forall f\in H.
		\end{equation}
		From now we assume that  $C$ and $C^{'}$ commute with each other, and commute with the operator $P_{W_{j}}\Lambda_{j}^{\ast}\Lambda_{j}P_{W_{j}}$, for each $j\in J$
		\begin{lemma}
			Let $\Lambda_{CC^{'}}$ be a $(C,C^{'})-$controlled $g-$fusion frame for $H$. Then the  $(C,C^{'})-$controlled $g-$fusion frame operator $S_{(C,C^{'})}$ is positive, self-adjoint and invertible.
		\end{lemma}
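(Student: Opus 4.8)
The plan is to use the factorization $S_{(C,C')}=T_{(C,C')}T_{(C,C')}^{\ast}$ through the synthesis operator, which delivers self-adjointness and positivity at once, and then to extract invertibility from the lower frame bound by feeding it into Lemmas \ref{6} and \ref{3}.

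First I would record that $T_{(C,C')}\in End_{\mathcal{A}}^{\ast}\big(l^{2}(\{H_{j}\}_{j\in J}),H\big)$ is bounded and adjointable (already established above), so $S_{(C,C')}=T_{(C,C')}T_{(C,C')}^{\ast}$ satisfies $S_{(C,C')}^{\ast}=\big(T_{(C,C')}T_{(C,C')}^{\ast}\big)^{\ast}=T_{(C,C')}T_{(C,C')}^{\ast}=S_{(C,C')}$, hence $S_{(C,C')}$ is self-adjoint. Positivity is then immediate: for every $f\in H$ one has $\langle S_{(C,C')}f,f\rangle=\langle T_{(C,C')}^{\ast}f,T_{(C,C')}^{\ast}f\rangle\ge 0$; alternatively, combining \eqref{eq12} with the left inequality of \eqref{eq1} gives $\langle S_{(C,C')}f,f\rangle\ge A\langle f,f\rangle\ge 0$.

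For invertibility the key chain is $\langle T_{(C,C')}^{\ast}f,T_{(C,C')}^{\ast}f\rangle=\langle S_{(C,C')}f,f\rangle\ge A\langle f,f\rangle$ for all $f\in H$, where the equality is the factorization and the estimate is the lower frame bound read off from \eqref{eq12}. This is exactly statement (iii) of Lemma \ref{6} applied to $T=T_{(C,C')}$, so $T_{(C,C')}$ is surjective. Lemma \ref{3}(ii) applied to $T_{(C,C')}$ then shows that $T_{(C,C')}T_{(C,C')}^{\ast}=S_{(C,C')}$ is invertible, and as a bonus yields the bounds $\|S_{(C,C')}^{-1}\|^{-1}\le S_{(C,C')}\le\|T_{(C,C')}\|^{2}$.

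The only step needing genuine care is the justification of the factorization $S_{(C,C')}=T_{(C,C')}T_{(C,C')}^{\ast}$ together with the companion identity \eqref{eq12}: both rely on the standing hypothesis that $C$ and $C'$ commute with one another and with each $P_{W_{j}}\Lambda_{j}^{\ast}\Lambda_{j}P_{W_{j}}$, which is precisely what lets the factors $(CC')^{1/2}$ in $T_{(C,C')}$ and $(C'C)^{1/2}$ in $T_{(C,C')}^{\ast}$ recombine into $C'P_{W_{j}}\Lambda_{j}^{\ast}\Lambda_{j}P_{W_{j}}C$ when the composition is formed. Once this bookkeeping is in place (it is already displayed just before the statement), the rest is the two-line deduction above. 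One could equally well finish via Lemma \ref{11}, using that $S_{(C,C')}$ is self-adjoint and surjective, but the route through Lemmas \ref{6} and \ref{3} is the most direct.
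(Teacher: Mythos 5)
Your argument is correct, but it follows a genuinely different route from the paper's. The paper proves positivity directly from the lower frame bound via the identity $\langle S_{(C,C^{'})}f,f\rangle=\sum_{j\in J}v_{j}^{2}\langle \Lambda_{j}P_{W_{j}}Cf,\Lambda_{j}P_{W_{j}}C^{'}f\rangle$; it proves self-adjointness by computing $\langle S_{(C,C^{'})}f,g\rangle=\langle f,S_{(C^{'},C)}g\rangle$, so that $S_{(C,C^{'})}^{\ast}=S_{(C^{'},C)}$, and then invoking the commutativity hypothesis to identify $S_{(C^{'},C)}$ with $S_{(C,C^{'})}$; and it obtains invertibility directly from the operator inequality $AI_{H}\leq S_{(C,C^{'})}\leq BI_{H}$, with no appeal to the synthesis operator or to Lemmas \ref{6} and \ref{3}. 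You instead read self-adjointness and positivity off the factorization $S_{(C,C^{'})}=T_{(C,C^{'})}T_{(C,C^{'})}^{\ast}$ and get invertibility by showing $T_{(C,C^{'})}$ is surjective (Lemma \ref{6}(iii)) and then applying Lemma \ref{3}(ii). Both are sound. The paper's computation has the merit of exposing exactly where commutativity of $C$, $C^{'}$ with $P_{W_{j}}\Lambda_{j}^{\ast}\Lambda_{j}P_{W_{j}}$ is needed (without it one only gets $S_{(C,C^{'})}^{\ast}=S_{(C^{'},C)}$), and it records the inequality $AI_{H}\leq S_{(C,C^{'})}\leq BI_{H}$ that is reused later; your version buries that hypothesis inside the identity $S_{(C,C^{'})}=T_{(C,C^{'})}T_{(C,C^{'})}^{\ast}$ — which you rightly flag as the one step needing care — but in exchange it yields the extra norm bounds $\|S_{(C,C^{'})}^{-1}\|^{-1}\leq S_{(C,C^{'})}\leq\|T_{(C,C^{'})}\|^{2}$ and avoids any spectral argument for invertibility.
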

		\begin{proof}
			For each $f\in H$ we have  
			\begin{equation*}
				S_{(C,C^{'})}(f)=\sum_{j\in J}v_{j}^{2}C^{'}P_{W_{j}}\Lambda_{j}^{\ast}\Lambda_{j}P_{W_{j}}Cf
			\end{equation*}
			Then
			\begin{equation*}
				\sum_{j\in J}v_{j}^{2}\langle\Lambda_{j}P_{W_{j}}Cf,\Lambda_{j}P_{W_{j}}C^{'}f\rangle=\langle\sum_{j\in J}v_{j}^{2}C^{'}P_{W_{j}}\Lambda_{j}^{\ast}\Lambda_{j}P_{W_{j}}Cf,f\rangle=\langle S_{(C,C^{'})}(f),f\rangle.
			\end{equation*}   
			Since $\Lambda_{CC^{'}}$ is a $(C,C^{'})-$controlled $g-$fusion frame for $H$, then 
			\begin{equation}\label{eq2}
				A\langle f,f \rangle \leq \langle S_{(C,C^{'})}(f),f\rangle \leq B\langle f,f\rangle,\qquad \forall f\in H
			\end{equation}
			It is clear that $S_{(C,C^{'})}$ is  positive, bounded and linear operator. On the other hand for each $f$, $g\in H$
			\begin{align*}
				\langle S_{(C,C^{'})}(f),g\rangle&=\langle\sum_{j\in J}v_{j}^{2}C^{'}P_{W_{j}}\Lambda_{j}^{\ast}\Lambda_{j}P_{W_{j}}Cf,g\rangle\\&=\langle f,\sum_{j\in J}v_{j}^{2}CP_{W_{j}}\Lambda_{j}^{\ast}\Lambda_{j}P_{W_{j}}C^{'}g\rangle\\&=\langle f,S_{(C^{'},C)}(g)\rangle.
			\end{align*}
			That implies $S_{(C,C^{'})}^{\ast}=S_{(C^{'},C)}$. Also as $C$ and $C^{'}$ commute with each other, and commute with the operator $P_{W_{j}}\Lambda_{j}^{\ast}\Lambda_{j}P_{W_{j}}$, for each $j\in J$, we have $S_{(C,C^{'})}=S_{(C^{'},C)}$. So the $(C,C^{'})-$controlled $g-$fusion frame operator $S_{(C,C^{'})}$ is self-adjoint. And from inequality \eqref{eq2} we have
			\begin{equation}
				AI_{H}\leq S_{(C,C^{'})}\leq BI_{H}.
			\end{equation} 	
			Therefore, the $(C,C^{'})-$controlled $g-$fusion frame operator $S_{(C,C^{'})}$ is invertible.
		\end{proof}
		We estabilish an equivalent definition of $(C,C^{'})-$controlled $g-$fusion frame.
		\begin{theorem}\label{2}
			$\Lambda_{CC^{'}}=\{W_{j},\Lambda_{j},v_{j}\}_{j\in J}$ is a $(C,C^{'})-$controlled $g-$fusion frame for $H$. If and only if there exists two constants $0<A\leq B<\infty$ such that 
			\begin{equation}\label{eq3}
				A||f||^{2}\leq ||\sum_{j\in J}v_{j}^{2}\langle \Lambda_{j}P_{W_{j}}Cf,\Lambda_{j}P_{W_{j}}C^{'}f\rangle ||\leq B||f||^{2},\qquad \forall f\in H. 
			\end{equation}
		\end{theorem}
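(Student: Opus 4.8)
The plan is to prove the two implications separately: the forward direction is a one-line consequence of $C^{\ast}$-positivity, while the converse carries the real content.

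\emph{Only if.} If $\Lambda_{CC^{'}}$ is a $(C,C^{'})$-controlled $g$-fusion frame then \eqref{eq1} holds, and there all three terms are positive elements of $\mathcal A$. Applying the standard fact that $0\le a\le b$ in a $C^{\ast}$-algebra implies $\|a\|\le\|b\|$ to each half of \eqref{eq1}, and using $\|A\langle f,f\rangle\|=A\|f\|^{2}$ and $\|B\langle f,f\rangle\|=B\|f\|^{2}$, we obtain \eqref{eq3} with the same constants $A,B$.

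\emph{If.} Assume \eqref{eq3}. Using the standing commutation assumptions on $C,C^{'}$ I would rewrite, as in \eqref{eq12},
\[
\sum_{j\in J}v_{j}^{2}\langle \Lambda_{j}P_{W_{j}}Cf,\Lambda_{j}P_{W_{j}}C^{'}f\rangle=\langle S_{(C,C^{'})}f,f\rangle,\qquad S_{(C,C^{'})}=\sum_{j\in J}T_{j}^{\ast}T_{j},\quad T_{j}=v_{j}\Lambda_{j}P_{W_{j}}(C^{'}C)^{\frac12},
\]
so that $S_{(C,C^{'})}$ is positive, self-adjoint, and bounded (its finite partial sums are positive operators of operator norm $\le B$ by \eqref{eq3}), hence $S_{(C,C^{'})}\in End_{\mathcal A}^{\ast}(H)$. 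Writing $S_{(C,C^{'})}=(S_{(C,C^{'})}^{1/2})^{2}$ with $S_{(C,C^{'})}^{1/2}$ positive, we get
\[
\|S_{(C,C^{'})}^{1/2}f\|^{2}=\|\langle S_{(C,C^{'})}f,f\rangle\|=\Big\|\sum_{j\in J}v_{j}^{2}\langle \Lambda_{j}P_{W_{j}}Cf,\Lambda_{j}P_{W_{j}}C^{'}f\rangle\Big\|,
\]
so \eqref{eq3} says exactly that $\sqrt A\,\|f\|\le\|S_{(C,C^{'})}^{1/2}f\|\le\sqrt B\,\|f\|$ for every $f\in H$. Now the self-adjoint operator $S_{(C,C^{'})}^{1/2}$ satisfies condition (ii) of Lemma \ref{11}, hence also condition (iii): there are $m',M'>0$ with $m'\langle f,f\rangle\le\langle S_{(C,C^{'})}^{1/2}f,S_{(C,C^{'})}^{1/2}f\rangle\le M'\langle f,f\rangle$. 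Since $\langle S_{(C,C^{'})}^{1/2}f,S_{(C,C^{'})}^{1/2}f\rangle=\langle S_{(C,C^{'})}f,f\rangle=\sum_{j\in J}v_{j}^{2}\langle \Lambda_{j}P_{W_{j}}Cf,\Lambda_{j}P_{W_{j}}C^{'}f\rangle$, this is precisely \eqref{eq1} with bounds $m',M'$, so $\Lambda_{CC^{'}}$ is a $(C,C^{'})$-controlled $g$-fusion frame.

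The main obstacle is the converse, and specifically the fact that one may not simply erase the norm bars in \eqref{eq3} to recover the $\mathcal A$-valued estimate \eqref{eq1}: in a general Hilbert $C^{\ast}$-module the scalar $\|\langle Tf,f\rangle\|$ controls much less than the algebra element $\langle Tf,f\rangle$. The device that makes it go through is to pass to the positive square root $S_{(C,C^{'})}^{1/2}$, which converts the $C^{\ast}$-valued quantity into an honest vector norm $\|S_{(C,C^{'})}^{1/2}f\|^{2}$, and then to invoke Lemma \ref{11}, which is exactly the equivalence, for a self-adjoint operator, between a two-sided norm bound and a two-sided inner-product bound; note that this only returns \emph{some} admissible pair of frame bounds $m',M'$, which is all the definition of a frame requires. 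A subsidiary point to dispatch is the well-definedness and boundedness of $S_{(C,C^{'})}$, which I would settle via the uniform bound on its partial sums noted above.
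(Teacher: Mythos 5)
Your proof is correct and follows essentially the same route as the paper's: pass to the positive square root $S_{(C,C^{'})}^{1/2}$, translate \eqref{eq3} into the two-sided norm bound $\sqrt{A}\|f\|\leq\|S_{(C,C^{'})}^{1/2}f\|\leq\sqrt{B}\|f\|$, and invoke Lemma \ref{11} to recover the $\mathcal{A}$-valued inequality \eqref{eq1}. You are in fact somewhat more careful than the paper, which justifies positivity and self-adjointness of $S_{(C,C^{'})}$ in the converse direction by citing a lemma whose hypothesis is that $\Lambda_{CC^{'}}$ is already a controlled $g$-fusion frame, whereas you derive these properties directly from the partial sums under the hypothesis \eqref{eq3} alone.
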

		\begin{proof}
			If $\Lambda_{CC^{'}}$ be a $(C,C^{'})-$controlled $g-$fusion frame for $H$, then we have inequality \eqref{eq3}.\\
			Converselly, assume that \eqref{eq3} holds. From \eqref{eq4}, the $(C,C^{'})-$controlled $g-$fusion frame operator $S_{(C,C^{'})}$ is positive, self-adjoint and invertible. Then we have for all $f\in H$
			\begin{equation}\label{eq5}
				\langle (S_{(C,C^{'})})^{\frac{1}{2}}f,(S_{(C,C^{'})})^{\frac{1}{2}}f\rangle=\langle S_{(C,C^{'})}f,f\rangle=\sum_{j\in J}v_{j}^{2}\langle \Lambda_{j}P_{W_{j}}Cf,\Lambda_{j}P_{W_{j}}C^{'}f\rangle.
			\end{equation}
			Using \eqref{eq3} and \eqref{eq5}, we conclude that
			\begin{equation*}
				\sqrt{A}||f||\leq ||S_{(C,C^{'})}^{\frac{1}{2}}f||\leq \sqrt{B}||f||,\qquad\forall f\in H.
			\end{equation*}
			So by lemma \ref{11}, $\Lambda_{CC^{'}}$ is a $(C,C^{'})-$controlled $g-$fusion frame for $H$. 
		\end{proof}
		\begin{theorem}
			Let $\{W_{j},\Lambda_{j},v_{j}\}_{j\in J}$ be a $g-$fusion frame for $H$ with frame operator $S$ and let $C$, $C^{'}\in GL^{+}(H)$. Then $\{W_{j},\Lambda_{j},v_{j}\}_{j\in J}$ is a  $(C,C^{'})-$controlled $g-$fusion frame for $H$.
		\end{theorem}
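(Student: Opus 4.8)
The plan is to reduce the $(C,C')$-controlled inequality to the ordinary $g$-fusion frame inequality for the frame operator $S$, exploiting the standing commutativity assumptions. First I would unwind those assumptions: since $C$ and $C'$ commute with every $P_{W_j}\Lambda_j^{\ast}\Lambda_j P_{W_j}$, they commute with the (strongly convergent) sum defining $S$, so $C$, $C'$, $S$ are pairwise commuting positive operators; consequently $CC'$ is positive, self-adjoint, and commutes with $S$, hence also with $S^{1/2}$ and with $(CC')^{1/2}$ by functional calculus for commuting operators. Then, exactly as in \eqref{eq12}, for each $j$ and every $f\in H$,
\[
\langle\Lambda_j P_{W_j}Cf,\Lambda_j P_{W_j}C'f\rangle=\langle C'P_{W_j}\Lambda_j^{\ast}\Lambda_j P_{W_j}Cf,f\rangle=\langle C'C\,P_{W_j}\Lambda_j^{\ast}\Lambda_j P_{W_j}f,f\rangle ,
\]
and summing over $j$ (the partial sums of $\sum_j v_j^2P_{W_j}\Lambda_j^{\ast}\Lambda_j P_{W_j}f$ converge to $Sf$ because $\{W_j,\Lambda_j,v_j\}$ is a $g$-fusion frame and $C,C'$ are bounded) gives
\[
\sum_{j\in J}v_j^2\langle\Lambda_j P_{W_j}Cf,\Lambda_j P_{W_j}C'f\rangle=\langle CC'Sf,f\rangle .
\]

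Next I would put $h=(CC')^{1/2}f$; since $(CC')^{1/2}$ is self-adjoint and commutes with $S$, the right-hand side equals $\langle Sh,h\rangle$. The $g$-fusion frame inequality gives $A\langle h,h\rangle\le\langle Sh,h\rangle\le B\langle h,h\rangle$, and $\langle h,h\rangle=\langle CC'f,f\rangle$. It then remains only to sandwich $\langle CC'f,f\rangle$ between multiples of $\langle f,f\rangle$, for which I would use the elementary fact that for commuting positive operators $\|C^{-1}\|^{-1}I\le C\le\|C\|I$ and $\|C'^{-1}\|^{-1}I\le C'\le\|C'\|I$ imply
\[
\|C^{-1}\|^{-1}\|C'^{-1}\|^{-1}I\le CC'\le\|C\|\,\|C'\|I
\]
(for instance $CC'-\|C^{-1}\|^{-1}\|C'^{-1}\|^{-1}I=(C-\|C^{-1}\|^{-1}I)C'+\|C^{-1}\|^{-1}(C'-\|C'^{-1}\|^{-1}I)\ge0$, and symmetrically on the other side). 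Chaining the three estimates yields
\[
A\|C^{-1}\|^{-1}\|C'^{-1}\|^{-1}\langle f,f\rangle\le\sum_{j\in J}v_j^2\langle\Lambda_j P_{W_j}Cf,\Lambda_j P_{W_j}C'f\rangle\le B\|C\|\,\|C'\|\langle f,f\rangle ,
\]
which is precisely \eqref{eq1} with controlled bounds $A'=A\|C^{-1}\|^{-1}\|C'^{-1}\|^{-1}>0$ and $B'=B\|C\|\,\|C'\|<\infty$.

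All steps are routine computations; I expect the only delicate points to be the bookkeeping that $C$, $C'$ genuinely commute with $S$ and with the square roots $(CC')^{1/2}$, $S^{1/2}$ (which rests entirely on the standing hypothesis plus functional calculus) and the convergence of the defining series (which follows from the $g$-fusion Bessel property). An essentially equivalent variant avoids $(CC')^{1/2}$ by setting $g=S^{1/2}f$, writing $\langle CC'Sf,f\rangle=\langle CC'g,g\rangle$, bounding $CC'$ as above, and using $\langle g,g\rangle=\langle Sf,f\rangle$ together with the frame bounds on $S$; one could equally well argue with the norm form of Theorem \ref{2} instead of the inner-product form. I do not foresee any genuine obstacle beyond keeping this commutativity bookkeeping straight.
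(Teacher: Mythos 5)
Your proof is correct, and it takes a genuinely different (and more careful) route than the paper's. The paper works with the norm form of the definition: it writes $\|\sum_{j}v_{j}^{2}\langle\Lambda_{j}P_{W_{j}}Cf,\Lambda_{j}P_{W_{j}}C^{'}f\rangle\|=\|\langle S_{(C,C^{'})}f,f\rangle\|$ and then simply asserts that this equals $\|C\|\cdot\|C^{'}\|\cdot\|\langle Sf,f\rangle\|$, from which it reads off the bounds $A\|C\|\|C^{'}\|$ and $B\|C\|\|C^{'}\|$. That middle identity is not justified and does not hold in general; it is already suspicious that it makes the lower and the upper bound scale by the same factor, whereas the lower bound must degrade through $\|C^{-1}\|$, $\|(C^{'})^{-1}\|$. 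You instead stay at the level of the $\mathcal{A}$-valued inner product: using the standing hypothesis (announced just after \eqref{eq12}) that $C$, $C^{'}$ commute with each other and with every $P_{W_{j}}\Lambda_{j}^{\ast}\Lambda_{j}P_{W_{j}}$, you factor the sum as $\langle CC^{'}Sf,f\rangle=\langle S(CC^{'})^{1/2}f,(CC^{'})^{1/2}f\rangle$, apply the $g$-fusion inequality at $h=(CC^{'})^{1/2}f$, and sandwich $CC^{'}$ between $\|C^{-1}\|^{-1}\|(C^{'})^{-1}\|^{-1}I$ and $\|C\|\,\|C^{'}\|I$ (your algebraic decomposition of $CC^{'}-\|C^{-1}\|^{-1}\|(C^{'})^{-1}\|^{-1}I$ as a sum of positive commuting products checks out). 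This yields the correct asymmetric bounds $A\|C^{-1}\|^{-1}\|(C^{'})^{-1}\|^{-1}$ and $B\|C\|\|C^{'}\|$, and it establishes the stronger operator-inequality form \eqref{eq1} directly rather than only the norm form \eqref{eq3} used in Theorem \ref{2}; it also parallels the technique the paper itself uses correctly in the later $C^{2}$-controlled theorem. The one caveat is that your argument, like the paper's implicit use of $S_{(C,C^{'})}$, genuinely requires the standing commutativity assumption even though the theorem statement does not repeat it; your bookkeeping that $CC^{'}$ then commutes with $S$ and hence with $S^{1/2}$ and $(CC^{'})^{1/2}$ (as norm limits of polynomials) is standard and sound.
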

		\begin{proof}
			Let  $\{W_{j},\Lambda_{j},v_{j}\}_{j\in J}$ be a $g-$fusion frame for $H$ with frame bounds $A$ and $B$. Then for each $f\in H$ 
			\begin{equation}\label{eq15}
				A\langle f,f \rangle \leq \sum_{j\in J}v_{j}^{2}\langle \Lambda_{j}P_{W_{j}}f,\Lambda_{j}P_{W_{j}}f\rangle\leq B\langle f,f\rangle
			\end{equation}
			We have 
			\begin{equation}\label{eq16}
				||\sum_{j\in J}v_{j}^{2}\langle \Lambda_{j}P_{W_{j}}Cf,\Lambda_{j}P_{W_{j}}C^{'}f\rangle||=||\langle S_{(C,C^{'})}f,f\rangle ||=||C||.||C^{'}||.||\langle Sf,f\rangle ||,
			\end{equation}
			Using \eqref{eq15} and \eqref{eq16}, we conclude
			\begin{equation*}
				A||C||.||C^{'}||||\langle f,f\rangle ||\leq ||\sum_{j\in J}v_{j}^{2}\langle \Lambda_{j}P_{W_{j}}Cf,\Lambda_{j}P_{W_{j}}C^{'}f\rangle||\leq B||C||.||C^{'}||||\langle f,f\rangle||,\qquad \forall f\in H.
			\end{equation*}
			Therefore,  $\{W_{j},\Lambda_{j},v_{j}\}_{j\in J}$ is a  $(C,C^{'})-$controlled $g-$fusion frame for $H$ with bounds $A||C||.||C^{'}||$ and $B||C||.||C^{'}||$.
		\end{proof}
		\begin{remark}
			When $C=C^{'}$ we say that the sequence $\{W_{j},\Lambda_{j},v_{j}\}_{j\in J}$ is a $C^{2}-$controlled $g-$fusion frame for $H$.
		\end{remark}
		\begin{theorem}
			Let $C\in GL^{+}(H)$. The sequence $\{W_{j},\Lambda_{j},v_{j}\}_{j\in J}$ is a $g-$fusion frame for $H$ if and only if $\{W_{j},\Lambda_{j},v_{j}\}_{j\in J}$ is a $C^{2}-$controlled $g-$fusion frame for $H$. 
		\end{theorem}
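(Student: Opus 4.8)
The plan is to use the invertibility of $C$ as a change of variable, combined with the standard operator bounds
\begin{equation*}
\|C^{-1}\|^{-1}I_{H}\leq C\leq\|C\|I_{H},\qquad \|C\|^{-1}I_{H}\leq C^{-1}\leq\|C^{-1}\|I_{H},
\end{equation*}
which hold for every $C\in GL^{+}(H)$ (the second because $C^{-1}$ again lies in $GL^{+}(H)$); squaring these gives $\|C^{-1}\|^{-2}I_{H}\leq C^{2}\leq\|C\|^{2}I_{H}$ and $\|C\|^{-2}I_{H}\leq C^{-2}\leq\|C^{-1}\|^{2}I_{H}$. Note that, since $C=C^{\ast}$, one has $\langle Cf,Cf\rangle=\langle C^{2}f,f\rangle$ and $\langle C^{-1}f,C^{-1}f\rangle=\langle C^{-2}f,f\rangle$, so these inequalities translate into $\|C^{-1}\|^{-2}\langle f,f\rangle\leq\langle Cf,Cf\rangle\leq\|C\|^{2}\langle f,f\rangle$ and $\|C\|^{-2}\langle f,f\rangle\leq\langle C^{-1}f,C^{-1}f\rangle\leq\|C^{-1}\|^{2}\langle f,f\rangle$ for all $f\in H$. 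Also observe that in the $C^{2}$-controlled definition the middle term is $\sum_{j\in J}v_{j}^{2}\langle\Lambda_{j}P_{W_{j}}Cf,\Lambda_{j}P_{W_{j}}Cf\rangle$, i.e. the ordinary $g$-fusion frame sum evaluated at the vector $Cf$.

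For the forward implication, assume $\{W_{j},\Lambda_{j},v_{j}\}_{j\in J}$ is a $g$-fusion frame with bounds $A,B$. Replacing $f$ by $Cf$ in its defining inequality yields
\begin{equation*}
A\langle Cf,Cf\rangle\leq\sum_{j\in J}v_{j}^{2}\langle\Lambda_{j}P_{W_{j}}Cf,\Lambda_{j}P_{W_{j}}Cf\rangle\leq B\langle Cf,Cf\rangle,\qquad\forall f\in H,
\end{equation*}
and then substituting the two-sided estimate for $\langle Cf,Cf\rangle$ from the first paragraph shows that $\{W_{j},\Lambda_{j},v_{j}\}_{j\in J}$ is a $C^{2}$-controlled $g$-fusion frame with bounds $A\|C^{-1}\|^{-2}$ and $B\|C\|^{2}$. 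Conversely, assume $\{W_{j},\Lambda_{j},v_{j}\}_{j\in J}$ is a $C^{2}$-controlled $g$-fusion frame with bounds $A,B$. Since $C$ is a bijection of $H$, we may replace $f$ by $C^{-1}f$ in its defining inequality; as $\Lambda_{j}P_{W_{j}}C(C^{-1}f)=\Lambda_{j}P_{W_{j}}f$, this gives
\begin{equation*}
A\langle C^{-1}f,C^{-1}f\rangle\leq\sum_{j\in J}v_{j}^{2}\langle\Lambda_{j}P_{W_{j}}f,\Lambda_{j}P_{W_{j}}f\rangle\leq B\langle C^{-1}f,C^{-1}f\rangle,\qquad\forall f\in H,
\end{equation*}
and substituting the estimate for $\langle C^{-1}f,C^{-1}f\rangle$ shows $\{W_{j},\Lambda_{j},v_{j}\}_{j\in J}$ is a $g$-fusion frame with bounds $A\|C\|^{-2}$ and $B\|C^{-1}\|^{2}$.

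I do not expect a genuine obstacle in this argument; it is essentially a two-line change of variable. The only points requiring minor care are the justification of the operator inequalities $\|C^{-1}\|^{-1}I_{H}\leq C\leq\|C\|I_{H}$ for positive invertible $C$ (a standard spectral fact, in the same spirit as Lemma \ref{3}), together with the observation that squaring preserves the order since all the operators involved commute with $C$, and the legitimacy of the substitutions $f\mapsto Cf$, $f\mapsto C^{-1}f$, which is immediate because $C$ is bijective. Alternatively one could run the whole proof through the frame operator identity $\langle S_{(C,C)}f,f\rangle=\sum_{j\in J}v_{j}^{2}\langle\Lambda_{j}P_{W_{j}}Cf,\Lambda_{j}P_{W_{j}}Cf\rangle$ together with $S_{(C,C)}=CSC$, deducing the equivalence of $AI_{H}\leq S\leq BI_{H}$ and suitable bounds on $S_{(C,C)}$, but the direct substitution is shorter.
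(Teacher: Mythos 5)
Your proposal is correct and follows essentially the same route as the paper: the substitutions $f\mapsto Cf$ and $f\mapsto C^{-1}f$ combined with two-sided bounds on $\langle Cf,Cf\rangle$ and $\langle C^{-1}f,C^{-1}f\rangle$ in terms of $\|C\|$ and $\|C^{-1}\|$ (the paper obtains the lower bounds via $\langle f,f\rangle=\langle C^{-1}Cf,C^{-1}Cf\rangle\leq\|C^{-1}\|^{2}\langle Cf,Cf\rangle$ and Lemma \ref{3}, rather than your spectral inequality $\|C^{-1}\|^{-2}I_{H}\leq C^{2}$, but these are equivalent and yield the same constants).
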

		\begin{proof}
			Suppose that $\{W_{j},\Lambda_{j},v_{j}\}_{j\in J}$ is a $g-$fusion frame for $H$. with bounds $A$ and $B$. Then 
			\begin{equation*}
				A\langle f,f\rangle \leq\sum_{j\in J}v_{j}^{2}\langle \Lambda_{j}P_{W_{j}}f,\Lambda_{j}P_{W_{j}}f\rangle \leq B\langle f,f\rangle,\qquad \forall f\in H.
			\end{equation*}
			We have for each $f\in H$,
			\begin{equation}\label{eq6}
				\sum_{j\in J}v_{j}^{2}\langle \Lambda_{j}P_{W_{j}}Cf,\Lambda_{j}P_{W_{j}}Cf\rangle\leq B\langle Cf,Cf\rangle\leq B||C||^{2}\langle f,f\rangle.
			\end{equation}
			On the other hand for each $f\in H$
			\begin{align}\label{eq7}
				A\langle f,f\rangle=A\langle C^{-1}Cf,C^{-1}Cf\rangle &\leq A||C^{-1}||^{2}\langle Cf,Cf\rangle\notag \\&\leq ||C^{-1}||^{2}\sum_{j\in J}v_{j}^{2}\langle \Lambda_{j}P_{W_{j}}Cf,\Lambda_{j}P_{W_{j}}Cf\rangle.
			\end{align}
			So from \eqref{eq6} and \eqref{eq7}, we have  
			\begin{equation*}
				A||C^{-1}||^{-2}\langle f,f\rangle \leq \sum_{j\in J}v_{j}^{2}\langle \Lambda_{j}P_{W_{j}}Cf,\Lambda_{j}P_{W_{j}}Cf\rangle\leq B||C||^{2}\langle f,f\rangle,\qquad \forall f\in H.
			\end{equation*}
			We conclude that $\{W_{j},\Lambda_{j},v_{j}\}_{j\in J}$ is a $C^{2}-$controlled $g-$fusion frame for $H$.\\
			Converselly, Let $\{W_{j},\Lambda_{j},v_{j}\}_{j\in J}$ be a $C^{2}-$controlled $g-$fusion frame for $H$ with bounds $A^{'}$ and $B^{'}$. Then for all $f\in H$,
			\begin{equation*}
				A^{'}\langle f,f\rangle\leq \sum_{j\in J}v_{j}^{2}\langle \Lambda_{j}P_{W_{j}}Cf,\Lambda_{j}P_{W_{j}}Cf\rangle\leq B^{'}\langle f,f\rangle
			\end{equation*}
			We have for each $f\in H$,
			\begin{align}\label{eq8}
				\sum_{j\in J}v_{j}^{2}\langle \Lambda_{j}P_{W_{j}}f,\Lambda_{j}P_{W_{j}}f\rangle &\notag=\sum_{j\in J}v_{j}^{2}\langle \Lambda_{j}P_{W_{j}}CC^{-1}f,\Lambda_{j}P_{W_{j}}CC^{-1}f\rangle \\&\notag\leq B^{'}\langle C^{-1}f,C^{-1}f\rangle  \\&\leq B^{'}||C^{-1}||^{2}\langle f,f\rangle.
			\end{align}
			Also for each $f\in H$,
			\begin{align*}
				A^{'}\langle C^{-1}f,C^{-1}f\rangle\leq \sum_{j\in J}v_{j}^{2}\langle \Lambda_{j}P_{W_{j}}CC^{-1}f,\Lambda_{j}P_{W_{j}}CC^{-1}f\rangle=\sum_{j\in J}v_{j}^{2}\langle \Lambda_{j}P_{W_{j}}f,\Lambda_{j}P_{W_{j}}f\rangle
			\end{align*}
			And
			\begin{equation}\label{eq9}
				A^{'}||(C^{-1}C^{-1})^{-1}||^{-1}\langle f,f\rangle \leq 	A^{'}\langle C^{-1}f,C^{-1}f\rangle\leq \sum_{j\in J}v_{j}^{2}\langle \Lambda_{j}P_{W_{j}}f,\Lambda_{j}P_{W_{j}}f\rangle
			\end{equation}
			From \eqref{eq8} and \eqref{eq9}, we have
			\begin{equation*}
				A^{'}||(C^{-2})^{-1}||^{-1}\langle f,f\rangle \leq \sum_{j\in J}v_{j}^{2}\langle \Lambda_{j}P_{W_{j}}f,\Lambda_{j}P_{W_{j}}f\rangle\leq B^{'}||C^{-1}||^{2}\langle f,f\rangle,\qquad \forall f\in H.
			\end{equation*}
			Hence $\{W_{j},\Lambda_{j},v_{j}\}_{j\in J}$ is a $g-$fusion frame for $H$.
		\end{proof}
		\begin{theorem}
			Let $C$, $C^{'}\in GL^{+}(H)$, and $C$, $C^{'}$ commute with each other and commute with $P_{W_{j}}\Lambda_{j}^{\ast}\Lambda_{j}P_{W_{j}}$ for all $j\in J$. Then $\Lambda_{CC^{'}}=\{W_{j},\Lambda_{j},v_{j}\}_{j\in J}$ is a $(C,C^{'})-$controlled $g-$fusion bessel sequence for $H$ with bound $B$ if and only if the operator $T_{(C,C^{'})}:l^{2}(\{H_{j}\}_{j\in J}) \rightarrow H$ given by 
			\begin{equation*}
				T_{(C,C^{'})}(\{g_{j}\}_{j\in J})=\sum_{j\in J}v_{j}(CC^{'})^{\frac{1}{2}}P_{W_{j}}\Lambda_{j}^{\ast}g_{j},\qquad \forall \{g_{j}\}_{j\in J}\in l^{2}(\{H_{j}\}_{j\in J}). 
			\end{equation*}
			is well defined and bounded operator with, $||T_{(C,C^{'})}||\leq \sqrt{B}$.
		\end{theorem}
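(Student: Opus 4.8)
The plan is to prove the two implications separately; the "if" direction is the quicker one. For the "only if" direction I would start from the hypothesis that $\Lambda_{CC^{'}}$ is a $(C,C^{'})$-controlled $g$-fusion Bessel sequence with bound $B$. Since $C,C^{'}\in GL^{+}(H)$ commute, the product $CC^{'}$ lies in $GL^{+}(H)$, so $(CC^{'})^{\frac{1}{2}}$ is a well-defined positive (hence self-adjoint) operator commuting with $C$, with $C^{'}$, and with each $P_{W_{j}}\Lambda_{j}^{\ast}\Lambda_{j}P_{W_{j}}$. Using these commutation relations and the self-adjointness of $(CC^{'})^{\frac{1}{2}}$ I would check the term-by-term identity
\[
v_{j}^{2}\langle \Lambda_{j}P_{W_{j}}(CC^{'})^{\frac{1}{2}}h,\Lambda_{j}P_{W_{j}}(CC^{'})^{\frac{1}{2}}h\rangle=v_{j}^{2}\langle \Lambda_{j}P_{W_{j}}Ch,\Lambda_{j}P_{W_{j}}C^{'}h\rangle,\qquad h\in H,\ j\in J,
\]
whose left-hand side is manifestly positive. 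Summing over a finite $I\subset J$ and using that, for a convergent series of positive elements, every finite partial sum is dominated by the total, I obtain $\|\{v_{j}\Lambda_{j}P_{W_{j}}(CC^{'})^{\frac{1}{2}}h\}_{j\in I}\|^{2}=\|\sum_{j\in I}v_{j}^{2}\langle \Lambda_{j}P_{W_{j}}Ch,\Lambda_{j}P_{W_{j}}C^{'}h\rangle\|\le B\|h\|^{2}$. Next, fixing $\{g_{j}\}_{j\in J}\in l^{2}(\{H_{j}\}_{j\in J})$ and a finite $I\subset J$, I would pair $\sum_{j\in I}v_{j}(CC^{'})^{\frac{1}{2}}P_{W_{j}}\Lambda_{j}^{\ast}g_{j}$ against an arbitrary $h\in H$, move $(CC^{'})^{\frac{1}{2}}P_{W_{j}}\Lambda_{j}^{\ast}$ to the other slot of the $\mathcal{A}$-valued inner product, and apply the module-valued Cauchy–Schwarz inequality in $l^{2}(\{H_{j}\}_{j\in J})$ together with the bound just obtained:
\[
\Big\|\big\langle \sum_{j\in I}v_{j}(CC^{'})^{\frac{1}{2}}P_{W_{j}}\Lambda_{j}^{\ast}g_{j},\,h\big\rangle\Big\|\le\|\{g_{j}\}_{j\in I}\|\,\|\{v_{j}\Lambda_{j}P_{W_{j}}(CC^{'})^{\frac{1}{2}}h\}_{j\in I}\|\le\sqrt{B}\,\|\{g_{j}\}_{j\in I}\|\,\|h\|.
\]
Taking $h$ equal to the partial sum itself gives $\|\sum_{j\in I}v_{j}(CC^{'})^{\frac{1}{2}}P_{W_{j}}\Lambda_{j}^{\ast}g_{j}\|\le\sqrt{B}\,\|\{g_{j}\}_{j\in I}\|\le\sqrt{B}\,\|\{g_{j}\}_{j\in J}\|$. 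Applying the same estimate to differences $I^{'}\setminus I$ shows the partial sums form a Cauchy net in $H$ (the tails $\|\{g_{j}\}_{j\notin I}\|$ tend to $0$ since $\{g_{j}\}\in l^{2}$), so the series defining $T_{(C,C^{'})}\{g_{j}\}_{j\in J}$ converges; letting $I\uparrow J$ in the estimate yields $\|T_{(C,C^{'})}\|\le\sqrt{B}$.

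For the "if" direction I would assume $T_{(C,C^{'})}$ is well defined and bounded with $\|T_{(C,C^{'})}\|\le\sqrt{B}$. A direct adjoint computation — the one leading to \eqref{eq4} — identifies $T_{(C,C^{'})}^{\ast}h=\{v_{j}\Lambda_{j}P_{W_{j}}(CC^{'})^{\frac{1}{2}}h\}_{j\in J}$ for $h\in H$. Then, using $S_{(C,C^{'})}=T_{(C,C^{'})}T_{(C,C^{'})}^{\ast}$, the term-by-term identity above, and the estimate $\langle Tf,Tf\rangle\le\|T\|^{2}\langle f,f\rangle$,
\[
\sum_{j\in J}v_{j}^{2}\langle \Lambda_{j}P_{W_{j}}Ch,\Lambda_{j}P_{W_{j}}C^{'}h\rangle=\langle T_{(C,C^{'})}^{\ast}h,T_{(C,C^{'})}^{\ast}h\rangle\le\|T_{(C,C^{'})}^{\ast}\|^{2}\langle h,h\rangle\le B\langle h,h\rangle,
\]
which is exactly the $(C,C^{'})$-controlled $g$-fusion Bessel condition with bound $B$.

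The step I expect to be the main obstacle is the bookkeeping in the "only if" direction: one must (i) justify $\sum_{j\in I}a_{j}\le\sum_{j\in J}a_{j}$ for the positive elements $a_{j}=v_{j}^{2}\langle \Lambda_{j}P_{W_{j}}(CC^{'})^{\frac{1}{2}}h,\Lambda_{j}P_{W_{j}}(CC^{'})^{\frac{1}{2}}h\rangle$ once the full series is known to converge, and (ii) track the commutativity hypotheses carefully so that $(CC^{'})^{\frac{1}{2}}$ can be split as $C$ in one slot of the inner product and $C^{'}$ in the other; the centrality of the weights $v_{j}$ is also used when moving them in and out of the $\mathcal{A}$-valued inner product. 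Everything else is the routine Cauchy–Schwarz / Riesz-type manipulation familiar from the scalar Hilbert-space case.
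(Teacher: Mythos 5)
Your proposal is correct and follows essentially the same route as the paper: the forward direction rests on the identity $\langle \Lambda_{j}P_{W_{j}}(CC^{'})^{\frac{1}{2}}h,\Lambda_{j}P_{W_{j}}(CC^{'})^{\frac{1}{2}}h\rangle=\langle \Lambda_{j}P_{W_{j}}Ch,\Lambda_{j}P_{W_{j}}C^{'}h\rangle$ together with the Cauchy--Schwarz estimate against the Bessel bound, and the converse identifies $T_{(C,C^{'})}^{\ast}$ and applies $\langle T^{\ast}f,T^{\ast}f\rangle\leq\|T^{\ast}\|^{2}\langle f,f\rangle$. Your treatment is in fact slightly tighter than the paper's: you handle convergence via finite partial sums and a Cauchy net rather than a formal infinite-sum computation, and in the converse you obtain the explicit Bessel constant $B$ where the paper settles for an unspecified $M>0$.
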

		\begin{proof}
			Let $\Lambda_{CC^{'}}$ is a $(C,C^{'})-$controlled $g-$fusion bessel sequence with bound $B$ for $H$. As a result of theorem \ref{2},
			\begin{equation}
				|| \sum_{j\in J}v_{j}^{2}\langle \Lambda_{j}P_{W_{j}}Cf,\Lambda_{j}P_{W_{j}}C^{'}f\rangle||\leq B||f||^{2},\qquad \forall f\in H.
			\end{equation}
			For any $\{g_{j}\}_{j\in J}\in l^{2}(\{H_{j}\}_{j\in J})$,
			\begin{align*}
				||T_{(C,C^{'})}(\{g_{j}\}_{j\in J})||&=\sup_{||f||=1}||\langle T_{(C,C^{'})}(\{g_{j}\}_{j\in J}),f\rangle||\\&=\sup_{||f||=1}||\langle \sum_{j\in J}v_{j}(CC^{'})^{\frac{1}{2}}P_{W_{j}}\Lambda_{j}^{\ast}g_{j},f\rangle||\\&=\sup_{||f||=1}||\sum_{j\in J}\langle v_{j}(CC^{'})^{\frac{1}{2}}P_{W_{j}}\Lambda_{j}^{\ast}g_{j},f\rangle||\\&=\sup_{||f||=1}||\sum_{j\in J}\langle g_{j}, v_{j}\Lambda_{j}P_{W_{j}}(CC^{'})^{\frac{1}{2}}f\rangle||\\&\leq \sup_{||f||=1}||\sum_{j\in J}\langle g_{j}, g_{j}\rangle||^{\frac{1}{2}}||\sum_{j\in J}v_{j}^{2}\langle\Lambda_{j}P_{W_{j}}(CC^{'})^{\frac{1}{2}}f, \Lambda_{j}P_{W_{j}}(CC^{'})^{\frac{1}{2}}f\rangle||^{\frac{1}{2}}\\&= \sup_{||f||=1}||\sum_{j\in J}\langle g_{j}, g_{j}\rangle||^{\frac{1}{2}}||\sum_{j\in J}v_{j}^{2}\langle\Lambda_{j}P_{W_{j}}Cf, \Lambda_{j}P_{W_{j}}C^{'}f\rangle||^{\frac{1}{2}}\\&\leq \sup_{||f||=1}||\sum_{j\in J}\langle g_{j}, g_{j}\rangle||^{\frac{1}{2}}\sqrt{B}||f||=\sqrt{B}||\{g_{j}\}_{j\in J}||.
			\end{align*}
			Therefore, the sum $\sum_{j\in J}v_{j}(CC^{'})^{\frac{1}{2}}P_{W_{j}}\Lambda_{j}^{\ast}g_{j}$ is convergent, and we have 
			\begin{equation*}
				||T_{(C,C^{'})}(\{g_{j}\}_{j\in J})||\leq \sqrt{B}||\{g_{j}\}_{j\in J}||
			\end{equation*}
			Hence the operator $T_{(C,C^{'})}$ is well defined, bounded and $||T_{(C,C^{'})}||\leq \sqrt{B}$.\\
			For the converse, suppose that the operator $T_{(C,C^{'})}$ is well defined, bounded and $||T_{(C,C^{'})}||\leq \sqrt{B}$. For all $f\in H$, we have 
			\begin{align*}
				||\sum_{j\in J}v_{j}^{2}\langle \Lambda_{j}P_{W_{j}}Cf,\Lambda_{j}P_{W_{j}}C^{'}f\rangle|| &=||\sum_{j\in J}v_{j}^{2}\langle C^{'}P_{W_{j}}\Lambda_{j}^{\ast}\Lambda_{j}P_{W_{j}}Cf,f\rangle||\\&=||\sum_{j\in J}v_{j}^{2}\langle (CC^{'})^{\frac{1}{2}}P_{W_{j}}\Lambda_{j}^{\ast}\Lambda_{j}P_{W_{j}}(CC^{'})^{\frac{1}{2}}f,f\rangle||\\&=||\langle T_{(C,C^{'})}(\{g_{j}\}_{j\in J}),f\rangle||\\&\leq ||T_{(C,C^{'})}||||\{g_{j}\}_{j\in J}||||f||\\&=||T_{(C,C^{'})}||||\sum_{j\in J}v_{j}^{2}\langle \Lambda_{j}P_{W_{j}}Cf,\Lambda_{j}P_{W_{j}}C^{'}f\rangle||^{\frac{1}{2}}||f||
			\end{align*}
			Where $g_{j}=v_{j}\Lambda_{j}P_{W_{j}}(CC^{'})^{\frac{1}{2}}f$.\\
			Hence 
			\begin{equation*}
				||\sum_{j\in J}v_{j}^{2}\langle \Lambda_{j}P_{W_{j}}Cf,\Lambda_{j}P_{W_{j}}C^{'}f\rangle||^{\frac{1}{2}}\leq \sqrt{B}||f||
			\end{equation*}
			Then
			\begin{equation}\label{eq25}
				||\sum_{j\in J}v_{j}^{2}\langle \Lambda_{j}P_{W_{j}}Cf,\Lambda_{j}P_{W_{j}}C^{'}f\rangle||\leq B||f||^{2}
			\end{equation}
			The adjoint operator of $T_{(C,C^{'})}$ is given by 
			\begin{equation*}
				T_{(C,C^{'})}^{\ast}(g)=\{v_{j}\Lambda_{j}P_{W_{j}}(CC^{'})^{\frac{1}{2}}g\}_{j\in J},\qquad\forall g\in H.
			\end{equation*}
			And we have for each $f\in H$
			\begin{align*}
				||\sum_{j\in J}\langle\Lambda_{j}P_{W_{j}}Cf,\Lambda_{j}P_{W_{j}}C^{'}f\rangle||&=||\sum_{j\in J}v_{j}^{2}\langle \Lambda_{j}P_{W_{j}}(CC^{'})^{\frac{1}{2}}f,\Lambda_{j}P_{W_{j}}(CC^{'})^{\frac{1}{2}}f\rangle||\\&=||\langle T^{\ast}_{(C,C^{'})}(f),T^{\ast}_{(C,C^{'})}(f)\rangle||\\&=||T^{\ast}_{(C,C^{'})}(f)||^{2}
			\end{align*}
			Frome \eqref{eq25}, we have 
			\begin{equation*}
				||T^{\ast}_{(C,C^{'})}(f)||\leq \sqrt{B}||f||,\qquad\forall f\in H.
			\end{equation*}
			So, $T^{\ast}_{(C,C^{'})}$ is bounded $\mathcal{A}-$linear operator, then there exist a constant $M>0$ such that 
			\begin{equation*}
				\langle T^{\ast}_{(C,C^{'})}f,T^{\ast}_{(C,C^{'})}f\rangle \leq M\langle f,f\rangle,\qquad\forall f\in H.
			\end{equation*} 
			Hence 
			\begin{equation*}
				\sum_{j\in J}v_{j}^{2}\langle \Lambda_{j}P_{W_{j}}Cf,\Lambda_{j}P_{W_{j}}C^{'}f\rangle\leq M\langle f,f\rangle.\qquad\forall f\in H.
			\end{equation*}
			This give that $\Lambda_{CC^{'}}$ is a $(C,C^{'})-$controlled $g-$fusion bessel sequence for $H$.
		\end{proof}
		\begin{theorem}
			Let $\{W_{j},\Lambda_{j},v_{j}\}_{j\in J}$ be a $(C,C^{'})-$controlled $g-$fusion frame for $H$ with bounds $A$ and $B$, with operator frame $S_{(C,C^{'})}$. Let $\theta \in End_{\mathcal{A}}^{\ast}(H)$ be injective and has a closed range. Suppose that $\theta$ commute with $C$, $C^{'}$ and $P_{W_{j}}$ for all $j\in J$. Then $\{W_{j},\Lambda_{j}\theta,v_{j}\}_{j\in J}$ is a $(C,C^{'})-$controlled $g-$fusion frame for $H$.
		\end{theorem}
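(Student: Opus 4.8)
The plan is to reduce the defining inequality \eqref{eq1} for the new family $\{W_j,\Lambda_j\theta,v_j\}_{j\in J}$ to the inequality \eqref{eq2} already known for $S_{(C,C')}$, evaluated at the vector $\theta f$. First I would use the hypotheses: since $\theta$ commutes with each $P_{W_j}$ and with $C$ and $C'$, for every $f\in H$ and $j\in J$ one has $(\Lambda_j\theta)P_{W_j}Cf=\Lambda_jP_{W_j}\theta Cf=\Lambda_jP_{W_j}C(\theta f)$, and similarly $(\Lambda_j\theta)P_{W_j}C'f=\Lambda_jP_{W_j}C'(\theta f)$. Hence, using \eqref{eq12},
\[
\sum_{j\in J}v_j^{2}\langle(\Lambda_j\theta)P_{W_j}Cf,(\Lambda_j\theta)P_{W_j}C'f\rangle=\sum_{j\in J}v_j^{2}\langle\Lambda_jP_{W_j}C(\theta f),\Lambda_jP_{W_j}C'(\theta f)\rangle=\langle\theta^{\ast}S_{(C,C')}\theta f,f\rangle .
\]

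Next I would estimate $\langle\theta^{\ast}S_{(C,C')}\theta f,f\rangle$ from above and below using the operator inequality $AI_H\leq S_{(C,C')}\leq BI_H$. For the upper bound, $BI_H-S_{(C,C')}\geq 0$ gives $\theta^{\ast}S_{(C,C')}\theta\leq B\,\theta^{\ast}\theta$, and combining this with the Paschke-type inequality $\langle\theta f,\theta f\rangle\leq\|\theta\|^{2}\langle f,f\rangle$ yields $\langle\theta^{\ast}S_{(C,C')}\theta f,f\rangle\leq B\|\theta\|^{2}\langle f,f\rangle$. For the lower bound, $S_{(C,C')}-AI_H\geq 0$ gives $\theta^{\ast}S_{(C,C')}\theta\geq A\,\theta^{\ast}\theta$; this is the step where injectivity and closedness of the range of $\theta$ are needed, since Lemma \ref{3}(i) then applies and shows $\theta^{\ast}\theta$ is invertible with $\theta^{\ast}\theta\geq\|(\theta^{\ast}\theta)^{-1}\|^{-1}I_H$, whence $\langle\theta^{\ast}S_{(C,C')}\theta f,f\rangle\geq A\|(\theta^{\ast}\theta)^{-1}\|^{-1}\langle f,f\rangle$.

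Combining the two bounds shows that $\{W_j,\Lambda_j\theta,v_j\}_{j\in J}$ satisfies \eqref{eq1} with constants $A\|(\theta^{\ast}\theta)^{-1}\|^{-1}$ and $B\|\theta\|^{2}$, so it is a $(C,C')$-controlled $g$-fusion frame for $H$. The only point needing a little care is consistency with the standing commutation assumption: one should check that $C,C'$ still commute with $P_{W_j}(\Lambda_j\theta)^{\ast}(\Lambda_j\theta)P_{W_j}=\theta^{\ast}P_{W_j}\Lambda_j^{\ast}\Lambda_jP_{W_j}\theta$, which holds because $\theta$ — and hence $\theta^{\ast}$, as $C,C'$ are self-adjoint — commutes with $C,C'$ and with the $P_{W_j}$, together with the assumed commutation of $C,C'$ with $P_{W_j}\Lambda_j^{\ast}\Lambda_jP_{W_j}$. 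I expect this bookkeeping of commutation relations to be the only mild obstacle; the analytic content is entirely carried by Lemma \ref{3}(i) and the positivity bounds for $S_{(C,C')}$.
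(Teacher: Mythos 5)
Your argument is correct and is essentially the paper's own proof: both reduce the frame condition for $\{W_{j},\Lambda_{j}\theta,v_{j}\}_{j\in J}$ to the known inequality evaluated at $\theta f$ via the commutation hypotheses, bound above by $B\|\theta\|^{2}\langle f,f\rangle$, and bound below using Lemma \ref{3}(i) to get $A\|(\theta^{\ast}\theta)^{-1}\|^{-1}\langle f,f\rangle$. Your packaging of the estimate as the operator inequality $A\,\theta^{\ast}\theta\leq\theta^{\ast}S_{(C,C^{'})}\theta\leq B\,\theta^{\ast}\theta$ (and your explicit check that the standing commutation assumption persists for the new family) is only a cosmetic difference from the paper's pointwise computation.
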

		\begin{proof}
			Let $\{W_{j},\Lambda_{j},v_{j}\}_{j\in J}$ be a $(C,C^{'})-$controlled $g-$fusion frame for $H$ with bounds $A$ and $B$, then 
			\begin{equation*}
				A\langle f,f \rangle \leq \sum_{j\in J}v_{j}^{2}\langle \Lambda_{j}P_{W_{j}}Cf,\Lambda_{j}P_{W_{j}}C^{'}f\rangle\leq B\langle f,f\rangle,\qquad \forall f\in H.
			\end{equation*}
			For each $f\in H$, we have 
			\begin{align}\label{eq21}
				\sum_{j\in J}v_{j}^{2}\langle \Lambda_{j}\theta P_{W_{j}}Cf,\Lambda_{j}\theta P_{W_{j}}C^{'}f\rangle &= \sum_{j\in J}v_{j}^{2}\langle \Lambda_{j}P_{W_{j}}C\theta f,\Lambda_{j}P_{W_{j}}C^{'}\theta f\rangle\notag\\&\leq B\langle \theta f,\theta f\rangle\notag \\&\leq B||\theta ||^{2}\langle f,f \rangle
			\end{align}
			And 
			\begin{equation*}
				A\langle \theta f,\theta f\rangle \leq \sum_{j\in J}v_{j}^{2}\langle \Lambda_{j}\theta P_{W_{j}}Cf,\Lambda_{j}\theta P_{W_{j}}C^{'}f\rangle,
			\end{equation*}
			By lemma \ref{3}, we have 
			\begin{equation*}
				A||(\theta^{\ast}\theta)^{-1}||^{-1}\langle f,f\rangle \leq A\langle \theta f,\theta f\rangle
			\end{equation*}
			So
			\begin{equation}\label{eq18}
				A||(\theta^{\ast}\theta)^{-1}||^{-1}\langle f,f\rangle \leq \sum_{j\in J}v_{j}^{2}\langle \Lambda_{j}\theta P_{W_{j}}Cf,\Lambda_{j}\theta P_{W_{j}}C^{'}f\rangle
			\end{equation}
			Using  \eqref{eq21} and \eqref{eq18} we conclude that
			\begin{equation*}
				A||(\theta^{\ast}\theta)^{-1}||^{-1}\langle f,f\rangle \leq \sum_{j\in J}v_{j}^{2}\langle \Lambda_{j}\theta P_{W_{j}}Cf,\Lambda_{j}\theta P_{W_{j}}C^{'}f\rangle \leq B||\theta ||^{2}\langle f,f \rangle, \qquad \forall f\in H.
			\end{equation*}
			Therefore 	$\{W_{j},\Lambda_{j}\theta,v_{j}\}_{j\in J}$ is a $(C,C^{'})-$controlled $g-$fusion frame for $H$.
		\end{proof}
		\begin{theorem}
			Let $\{W_{j},\Lambda_{j},v_{j}\}_{j\in J}$ be a $(C,C^{'})-$controlled $g-$fusion frame for $H$. with bounds $A$ and $B$. Let $\theta\in End_{\mathcal{A}}^{\ast}(L,H)$ be injective and has a closed range. Suppose that $\theta$ commute with $\Lambda_{j}P_{W_{j}}C$ and $\Lambda_{j}P_{W_{j}}C^{'}$ for all $j\in J$. Then  $\{W_{j},\theta \Lambda_{j},v_{j}\}_{j\in J}$ be a $(C,C^{'})-$controlled $g-$fusion frame for $H$. 
		\end{theorem}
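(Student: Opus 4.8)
The plan is to follow the template of the preceding theorem, but now with $\theta$ acting on the left of $\Lambda_{j}$. First I would record the defining estimate for the given frame: there are $0<A\le B<\infty$ with $A\langle f,f\rangle\le\sum_{j\in J}v_{j}^{2}\langle\Lambda_{j}P_{W_{j}}Cf,\Lambda_{j}P_{W_{j}}C^{'}f\rangle\le B\langle f,f\rangle$ for all $f\in H$, and I would note that $\theta\Lambda_{j}\in End_{\mathcal{A}}^{\ast}$ as a composition of adjointable maps, while $W_{j}$ and $v_{j}$ are unchanged, so the structural hypotheses of the definition are inherited.

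Next, fix $f\in H$ and consider $\sum_{j\in J}v_{j}^{2}\langle\theta\Lambda_{j}P_{W_{j}}Cf,\theta\Lambda_{j}P_{W_{j}}C^{'}f\rangle$. Using that $\theta$ commutes with $\Lambda_{j}P_{W_{j}}C$ and with $\Lambda_{j}P_{W_{j}}C^{'}$ for every $j$, I would replace $\theta\Lambda_{j}P_{W_{j}}Cf$ by $\Lambda_{j}P_{W_{j}}C\theta f$ and $\theta\Lambda_{j}P_{W_{j}}C^{'}f$ by $\Lambda_{j}P_{W_{j}}C^{'}\theta f$, so that the sum becomes $\sum_{j\in J}v_{j}^{2}\langle\Lambda_{j}P_{W_{j}}C(\theta f),\Lambda_{j}P_{W_{j}}C^{'}(\theta f)\rangle$, i.e. the controlled $g$-fusion frame expression of $\{W_{j},\Lambda_{j},v_{j}\}$ evaluated at $\theta f$. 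Applying the frame inequality at $\theta f$ then yields $A\langle\theta f,\theta f\rangle\le\sum_{j\in J}v_{j}^{2}\langle\theta\Lambda_{j}P_{W_{j}}Cf,\theta\Lambda_{j}P_{W_{j}}C^{'}f\rangle\le B\langle\theta f,\theta f\rangle$.

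It remains to sandwich $\langle\theta f,\theta f\rangle$ between multiples of $\langle f,f\rangle$. For the upper side, $\langle\theta f,\theta f\rangle\le\|\theta\|^{2}\langle f,f\rangle$. For the lower side, since $\theta$ is injective with closed range, Lemma \ref{3}(i) gives that $\theta^{\ast}\theta$ is invertible with $\|(\theta^{\ast}\theta)^{-1}\|^{-1}\le\theta^{\ast}\theta$, hence $\|(\theta^{\ast}\theta)^{-1}\|^{-1}\langle f,f\rangle\le\langle\theta^{\ast}\theta f,f\rangle=\langle\theta f,\theta f\rangle$. Combining these, one obtains $A\|(\theta^{\ast}\theta)^{-1}\|^{-1}\langle f,f\rangle\le\sum_{j\in J}v_{j}^{2}\langle\theta\Lambda_{j}P_{W_{j}}Cf,\theta\Lambda_{j}P_{W_{j}}C^{'}f\rangle\le B\|\theta\|^{2}\langle f,f\rangle$ for all $f\in H$, which is exactly the assertion that $\{W_{j},\theta\Lambda_{j},v_{j}\}$ is a $(C,C^{'})$-controlled $g$-fusion frame for $H$, with bounds $A\|(\theta^{\ast}\theta)^{-1}\|^{-1}$ and $B\|\theta\|^{2}$.

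I do not expect a genuine obstacle here: the substance is entirely in the commutation hypothesis, which is tailored precisely so that pushing $\theta$ through $\Lambda_{j}P_{W_{j}}C$ and $\Lambda_{j}P_{W_{j}}C^{'}$ turns the new frame expression into the old one evaluated at $\theta f$; after that, the two-sided control of $\langle\theta f,\theta f\rangle$ by $\langle f,f\rangle$ is the same argument used in the previous theorem. The only points meriting a line of care are checking that the domains and codomains of the compositions match so that ``$\theta$ commutes with $\Lambda_{j}P_{W_{j}}C$'' is meaningful and that $\theta\Lambda_{j}$ lands in the required module, and confirming that the series $\sum_{j\in J}v_{j}^{2}\langle\theta\Lambda_{j}P_{W_{j}}Cf,\theta\Lambda_{j}P_{W_{j}}C^{'}f\rangle$ converges, which is immediate from the convergence of the corresponding series for $\{W_{j},\Lambda_{j},v_{j}\}$ at the point $\theta f$.
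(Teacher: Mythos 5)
Your argument is correct and follows essentially the same route as the paper: use the commutation hypothesis to rewrite the new frame expression as the old one at $\theta f$, then control $\langle\theta f,\theta f\rangle$ from above by $\|\theta\|^{2}\langle f,f\rangle$ and from below via Lemma \ref{3}(i), arriving at the same bounds $A\|(\theta^{\ast}\theta)^{-1}\|^{-1}$ and $B\|\theta\|^{2}$. The only cosmetic difference is that the paper obtains the upper bound by pulling $\|\theta\|^{2}$ out of each term directly (without invoking commutation there), whereas you commute first and then bound $\langle\theta f,\theta f\rangle$; both give the same constant.
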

		\begin{proof}
			Let $\{W_{j},\Lambda_{j},v_{j}\}_{j\in J}$ be a $(C,C^{'})-$controlled $g-$fusion frame for $H$ with bounds $A$ and $B$, then 
			\begin{equation*}
				A\langle f,f \rangle \leq \sum_{j\in J}v_{j}^{2}\langle \Lambda_{j}P_{W_{j}}Cf,\Lambda_{j}P_{W_{j}}C^{'}f\rangle\leq B\langle f,f\rangle,\qquad \forall f\in H.
			\end{equation*}
			We have for each $f\in H$
			\begin{align}\label{eq19}
				\sum_{j\in J}v_{j}^{2}\langle \theta \Lambda_{j}P_{W_{j}}Cf,\theta\Lambda_{j}P_{W_{j}}C^{'}f\rangle &\leq ||\theta ||^{2} \sum_{j\in J}v_{j}^{2}\langle \Lambda_{j}P_{W_{j}}Cf,\Lambda_{j}P_{W_{j}}C^{'}f\rangle\notag\\&\leq B||\theta ||^{2}\langle f,f\rangle
			\end{align}
			On the other hand, 
			\begin{equation*}
				A\langle \theta f,\theta f\rangle \leq  \sum_{j\in J}v_{j}^{2}\langle \theta \Lambda_{j}P_{W_{j}}Cf,\theta \Lambda_{j}P_{W_{j}}C^{'}f\rangle= \sum_{j\in J}v_{j}^{2}\langle \Lambda_{j}P_{W_{j}}C\theta f,\Lambda_{j}P_{W_{j}}C^{'}\theta f\rangle
			\end{equation*}
			By lemma \ref{3}, we have
			\begin{equation}\label{eq20}
				A||(\theta^{\ast}\theta)^{-1}||^{-1}\langle f,f \rangle \leq  \sum_{j\in J}v_{j}^{2}\langle \theta\Lambda_{j}P_{W_{j}}Cf,\theta\Lambda_{j}P_{W_{j}}C^{'}f\rangle
			\end{equation}
			Using \eqref{eq19} and \eqref{eq20} , we conclude that 
			\begin{equation*}
				A||(\theta^{\ast}\theta)^{-1}||^{-1}\langle f,f \rangle \leq  \sum_{j\in J}v_{j}^{2}\langle \theta\Lambda_{j}P_{W_{j}}Cf,\theta\Lambda_{j}P_{W_{j}}C^{'}f\rangle\leq B||\theta ||^{2}\langle f,f \rangle,\qquad \forall f\in H. 
			\end{equation*}
			Hence, $\{W_{j},\theta \Lambda_{j},v_{j}\}_{j\in J}$ is a $(C,C^{'})-$controlled $g-$fusion frame for $H$. 
		\end{proof}	
		Under wich conditions a  $(C,C^{'})-$controlled $g-$fusion frame for $H$ with $H$ a $C^{\ast}-$module over a unital $C^{\ast}-$algebras $\mathcal{A}$ is also a  $(C,C^{'})-$controlled $g-$fusion frame for $H$ with $H$ a $C^{\ast}-$module over a unital $C^{\ast}-$algebras $\mathcal{B}$. the following theorem answer this questions. We teak in next theorem $H_{j}\subset H$, $\forall j\in J$.
		\begin{theorem}
			Let $(H,\mathcal{A},\langle .,.\rangle_{\mathcal{A}})$ and $(H,\mathcal{B},\langle .,.\rangle_{\mathcal{B}})$ be two Hilbert $C^{\ast}-$modules and let $\phi :\mathcal{A}\rightarrow \mathcal{B}$ be a $\ast -$homomorphisme and $\theta$ be a map on $H$ such that $\langle \theta f,\theta g\rangle_{\mathcal{B}}=\phi(\langle f,g\rangle_{\mathcal{A}})$ for all $f$, $g\in H$. Suppose that $\Lambda_{CC^{'}}=\{W_{j},\Lambda_{j},v_{j}\}_{j\in J}$ is a $(C,C^{'})-$controlled $g-$fusion frame for $(H,\mathcal{A},\langle .,.\rangle_{\mathcal{A}})$ with frame operator $S_{\mathcal{A}}$ and lower and upper bounds $A$ and $B$ respectively. If $\theta$ is surjective such that $\theta\Lambda_{j}P_{W_{j}}=\Lambda_{j}P_{W_{j}}\theta$ for each $j\in J$ and $\theta C=C\theta$ and $\theta C^{'}=C^{'}\theta$.\\ Then $\{W_{j},\Lambda_{j},\phi(v_{j})\}_{j\in J}$ is a $(C,C^{'})-$controlled $g-$fusion frame for $(H,\mathcal{B},\langle .,.\rangle_{\mathcal{B}})$ with frame operator $S_{\mathcal{B}}$ and lower and upper bounds $A$ and $B$ respectively and $\langle S_{\mathcal{B}}\theta f, \theta g\rangle_{\mathcal{B}}=\phi(\langle S_{\mathcal{A}}f,g\rangle_{\mathcal{A}})$.
		\end{theorem}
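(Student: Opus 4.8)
The plan is to transport the defining inequality of $\Lambda_{CC^{'}}$ from the $\mathcal{A}$-valued inner product to the $\mathcal{B}$-valued one by applying $\phi$ and then using the intertwining identity $\langle\theta f,\theta g\rangle_{\mathcal{B}}=\phi(\langle f,g\rangle_{\mathcal{A}})$ together with the commutation hypotheses.

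First I would start from
\[
A\langle f,f\rangle_{\mathcal{A}}\leq\sum_{j\in J}v_{j}^{2}\langle\Lambda_{j}P_{W_{j}}Cf,\Lambda_{j}P_{W_{j}}C^{'}f\rangle_{\mathcal{A}}\leq B\langle f,f\rangle_{\mathcal{A}},\qquad\forall f\in H,
\]
and apply $\phi$ to all three members. Since $\phi$ is a $\ast$-homomorphism it is monotone by Lemma \ref{ALDE}, it is linear so it passes through the real scalars $A$ and $B$, it is norm-contractive so it commutes with the norm-convergent series $\sum_{j\in J}$, and it is multiplicative so $\phi(v_{j}^{2}\,a)=\phi(v_{j})^{2}\phi(a)$ for every $a\in\mathcal{A}$. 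Using $\phi(\langle f,f\rangle_{\mathcal{A}})=\langle\theta f,\theta f\rangle_{\mathcal{B}}$ for the outer terms and $\phi(\langle\Lambda_{j}P_{W_{j}}Cf,\Lambda_{j}P_{W_{j}}C^{'}f\rangle_{\mathcal{A}})=\langle\theta\Lambda_{j}P_{W_{j}}Cf,\theta\Lambda_{j}P_{W_{j}}C^{'}f\rangle_{\mathcal{B}}$ for the middle term, and then sliding $\theta$ through $\Lambda_{j}P_{W_{j}}$, $C$ and $C^{'}$ by the commutation hypotheses so that $\theta\Lambda_{j}P_{W_{j}}Cf=\Lambda_{j}P_{W_{j}}C\theta f$ (and similarly with $C^{'}$), the chain becomes
\[
A\langle\theta f,\theta f\rangle_{\mathcal{B}}\leq\sum_{j\in J}\phi(v_{j})^{2}\langle\Lambda_{j}P_{W_{j}}C\theta f,\Lambda_{j}P_{W_{j}}C^{'}\theta f\rangle_{\mathcal{B}}\leq B\langle\theta f,\theta f\rangle_{\mathcal{B}},\qquad\forall f\in H.
\]
Because $\theta$ is surjective, $\theta f$ ranges over all of $H$ as $f$ does, so replacing $\theta f$ by an arbitrary $h\in H$ gives exactly the defining inequality of a $(C,C^{'})$-controlled $g$-fusion frame for $(H,\mathcal{B},\langle\cdot,\cdot\rangle_{\mathcal{B}})$ with the same bounds $A$ and $B$ and weights $\phi(v_{j})$; one also checks that each $\phi(v_{j})$ is again a positive invertible central element, so that the new system is admissible.

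For the frame-operator identity I would start from the $\mathcal{B}$-analogue of \eqref{eq12},
\[
\langle S_{\mathcal{B}}\theta f,\theta g\rangle_{\mathcal{B}}=\sum_{j\in J}\phi(v_{j})^{2}\langle\Lambda_{j}P_{W_{j}}C\theta f,\Lambda_{j}P_{W_{j}}C^{'}\theta g\rangle_{\mathcal{B}},
\]
undo the three commutations to rewrite the $j$-th summand as $\phi(v_{j})^{2}\langle\theta\Lambda_{j}P_{W_{j}}Cf,\theta\Lambda_{j}P_{W_{j}}C^{'}g\rangle_{\mathcal{B}}=\phi\big(v_{j}^{2}\langle\Lambda_{j}P_{W_{j}}Cf,\Lambda_{j}P_{W_{j}}C^{'}g\rangle_{\mathcal{A}}\big)$, pull $\phi$ out of the convergent sum, and recognise what is left as $\langle S_{\mathcal{A}}f,g\rangle_{\mathcal{A}}$ by \eqref{eq12}; this yields $\langle S_{\mathcal{B}}\theta f,\theta g\rangle_{\mathcal{B}}=\phi(\langle S_{\mathcal{A}}f,g\rangle_{\mathcal{A}})$.

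The algebraic manipulations are routine; the two points that need a little care are the interchange of $\phi$ with the infinite sum (which rests on $\phi$ being norm-contractive and the series being norm-convergent, the latter guaranteed by $\Lambda_{CC^{'}}$ being Bessel) and the verification that the standing assumption — $C$, $C^{'}$ commuting with each other and with $P_{W_{j}}\Lambda_{j}^{\ast}\Lambda_{j}P_{W_{j}}$ — still holds on the $\mathcal{B}$-side so that $S_{\mathcal{B}}$ is well defined; this last point is immediate since these are literally the same maps on the same underlying space $H$. The main bookkeeping obstacle, such as it is, is keeping track of the three commutation relations ($\theta$ with $\Lambda_{j}P_{W_{j}}$, with $C$, and with $C^{'}$) so that $\theta$ can be pushed freely onto the argument.
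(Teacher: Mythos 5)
Your proposal is correct and follows essentially the same route as the paper: apply $\phi$ to the $\mathcal{A}$-valued frame inequality, use monotonicity (Lemma \ref{ALDE}) and the $\ast$-homomorphism properties to get $\phi(v_{j})^{2}$ and $\mathcal{B}$-inner products via $\langle\theta f,\theta g\rangle_{\mathcal{B}}=\phi(\langle f,g\rangle_{\mathcal{A}})$, slide $\theta$ through by the commutation hypotheses, and invoke surjectivity of $\theta$; the frame-operator identity is the same computation, which you merely read from the $\mathcal{B}$-side backwards while the paper reads it forwards from $\phi(\langle S_{\mathcal{A}}f,g\rangle_{\mathcal{A}})$. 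Your added remarks on interchanging $\phi$ with the convergent series and on the admissibility of the weights $\phi(v_{j})$ are points the paper passes over silently, but they do not change the argument.
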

		\begin{proof}
			Since $\theta$ is surjective, then for every $g\in H$ there exists $f\in H$ such that $\theta f=g$. Using the definition of $(C,C^{'})-$controlled $g-$fusion frame for $(H,\mathcal{A},\langle .,.\rangle_{\mathcal{A}})$ we have
			\begin{equation*}
				A\langle f,f\rangle_{\mathcal{A}} \leq \sum_{j\in J}v_{j}^{2}\langle \Lambda_{j}P_{W_{j}}Cf,\Lambda_{j}P_{W_{j}}C^{'}f\rangle_{\mathcal{A}}\leq B\langle f,f\rangle_{\mathcal{A}}
			\end{equation*}
			By lemma \ref{ALDE} we have 
			\begin{equation*}
				\phi\big(A\langle f,f\rangle_{\mathcal{A}}\big)\leq \phi\bigg(\sum_{j\in J}v_{j}^{2}\langle \Lambda_{j}P_{W_{j}}Cf,\Lambda_{j}P_{W_{j}}C^{'}f\rangle_{\mathcal{A}}\bigg)\leq\phi\big(B\langle f,f\rangle_{\mathcal{A}}\big)
			\end{equation*}
			Frome the definition of $\ast-$homomorphisme we have
			\begin{equation*}
				A\phi\big(\langle f,f\rangle_{\mathcal{A}}\big)\leq\sum_{j\in J}\phi(v_{j}^{2})\phi\bigg(\langle \Lambda_{j}P_{W_{j}}Cf,\Lambda_{j}P_{W_{j}}C^{'}f\rangle_{\mathcal{A}}\bigg)\leq B\phi\big(\langle f,f\rangle_{\mathcal{A}}\big)
			\end{equation*}
			Using the relation betwen $\theta$ and $\phi$ we get 
			\begin{equation*}
				A\langle \theta f,\theta f\rangle_{\mathcal{B}}\leq\sum_{j\in J}\phi(v_{j})^{2}\langle \theta\Lambda_{j}P_{W_{j}}Cf,\theta\Lambda_{j}P_{W_{j}}C^{'}f\rangle_{\mathcal{B}}\leq B\langle \theta f,\theta f\rangle_{\mathcal{B}}
			\end{equation*}
			Since $\theta\Lambda_{j}P_{W_{j}}=\Lambda_{j}P_{W_{j}}\theta$ for each $j\in J$ and $\theta C=C\theta$ and $\theta C^{'}=C^{'}\theta$  we have
			\begin{equation*}
				A\langle\theta f ,\theta f\rangle_{\mathcal{B}}\leq\sum_{j\in J}\phi(v_{j})^{2}\langle \Lambda_{j}P_{W_{j}}C\theta f,\Lambda_{j}P_{W_{j}}C^{'}\theta f\rangle_{\mathcal{B}}\leq B\langle \theta f,\theta f\rangle_{\mathcal{B}}
			\end{equation*}
			Therefore,
			\begin{equation*}
				A\langle g,g\rangle_{\mathcal{B}}\leq\sum_{j\in J}\phi(v_{j})^{2}\langle \Lambda_{j}P_{W_{j}}Cg,\Lambda_{j}P_{W_{j}}C^{'}g\rangle_{\mathcal{B}}\leq B\langle g,g\rangle_{\mathcal{B}},\qquad \forall g\in H.
			\end{equation*}
			This implies that $\{W_{j},\Lambda_{j},\phi(v_{j})\}_{j\in J}$ is a $(C,C^{'})-$controlled $g-$fusion frame for $(H,\mathcal{B},\langle .,.\rangle_{\mathcal{B}})$ with bounds $A$ and $B$.
			Moreover we have
			\begin{align*}
				\phi\big(\langle S_{\mathcal{A}}f,g\rangle_{\mathcal{A}}\big)&=\phi\bigg(\langle\sum_{j\in J}v_{j}^{2}C^{'}P_{W_{j}}\Lambda_{j}^{\ast}\Lambda_{j}P_{W_{j}}Cf,g\rangle_{\mathcal{A}}\bigg)\\&=\sum_{j\in J}\phi(v_{j})^{2}\phi\bigg(\langle\Lambda_{j}P_{W_{j}}Cf,\Lambda_{j}P_{W_{j}}C^{'}g\rangle_{\mathcal{A}}\bigg)\\&=\sum_{j\in J}\phi(v_{j})^{2}\langle\theta\Lambda_{j}P_{W_{j}}Cf,\theta\Lambda_{j}P_{W_{j}}C^{'}g\rangle_{\mathcal{B}}\\&=\langle\sum_{j\in J}\phi(v_{j})^{2}C^{'}P_{W_{j}}\Lambda_{j}^{\ast}\Lambda_{j}P_{W_{j}}C\theta f,\theta g\rangle_{\mathcal{B}}\\&=\langle S_{\mathcal{B}}\theta f,\theta g\rangle_{\mathcal{B}}.
			\end{align*}
		\end{proof}
		\section{$(C,C^{'})-$controlled $K-g-$fusion frames in Hilbert $C^{\ast}-$modules}
		Firstly we give the definition of $K-g-$fusion frame in Hilbert $C^{\ast}-$modules.
			\begin{definition}\cite{rfn1}
			Let $\mathcal{A}$ be a unital $C^{\ast}-$algebra and $\mathcal{H}$ be a countably generated Hilbert $\mathcal{A}-$module. let $\left( v_{j}\right) _{j\in\mathrm{J}}$ be a family of weights in $\mathcal{A}$,i.e.,each $ v_{j}$ is a positive invertible element frome the center of $\mathcal{A}$, let $\left( W_{j}\right) _{j\in\mathrm{J}}$ be a collection of orthogonally complemented closed submodules of $\mathcal{H}$. Let $\left( \mathcal{K}_{j}\right) _{j\in\mathrm{J}}$ a sequence of closed submodules of $\mathcal{K}$ and $\large{\Lambda }_{j}\in End_{\mathcal{A}}^{\ast}(\mathcal{H},\mathcal{K}_{j}) $ for each $j\in\mathrm{J}$ and $K\in End^{\ast}_{\mathcal{A}}(\mathcal{H})$. We say $\Lambda=(W_{j},\Lambda_{j},v_{j})_{j\in\mathrm{J}}$ is $K-g-$fusion frame for $\mathcal{H}$
			with respect to $(\mathcal{K}_{j})_{j\in\mathrm{J}}$ if there exist real constants $ 0 < A \leq B<\infty$ such that  
			\begin{equation}\label{eq7}
				A\langle K^{\ast}f,K^{\ast}f\rangle \leq \sum_{j\in \mathrm{J}}v_{j}^{2} \langle \Lambda_{j} P_{W_{j}}f ,  \Lambda_{j} P_{W_{j}}f \rangle \leq B\langle f, f\rangle, \qquad \forall f\in \mathcal{H}.
			\end{equation} 
			The constants $A$ and $B$ are called a lower and upper bounds of $K-g-$fusion frame, respectively. If the left-hand inequality of \eqref{eq7} is an equality, we say that $\Lambda$ is a tight $K-g-$fusion frame. If $K=I_{\mathcal{H}}$ then $\Lambda$ is a $g-$fusion frame and if $K=I_{\mathcal{H}}$ and $\Lambda_{j}=P_{W_{j}}$ for any $ j\in \mathrm{J}$, then $\Lambda$ is a fusion frame for $\mathcal{H}$
		\end{definition}
		\begin{definition}
			Let $C$, $C^{'}\in GL^{+}(H)$ and $K\in End_{\mathcal{A}}^{\ast}(H)$. $\{W_{j}\}_{j\in J}$ be a sequence of closed submodules orthogonally complemented of $H$, $\{v_{j}\}_{j\in J}$ be a family of weights in $\mathcal{A}$, i.e., each $v_{j}$ is a positive invertible element frome the center of $\mathcal{A}$ and $\Lambda_{j}\in End_{A}^{\ast}(H,H_{j})$ for each $j\in J$. We say $\Lambda_{CC^{'}}=\{W_{j},\Lambda_{j},v_{j}\}_{j\in J}$ is a $(C,C^{'})-$controlled $K-g-$fusion frame for $H$ if there exists $0<A\leq B<\infty$ such that 
			\begin{equation}\label{eq10}
				A\langle K^{\ast}f,K^{\ast}f \rangle \leq \sum_{j\in J}v_{j}^{2}\langle \Lambda_{j}P_{W_{j}}Cf,\Lambda_{j}P_{W_{j}}C^{'}f\rangle\leq B\langle f,f\rangle,\qquad \forall f\in H.
			\end{equation}
			The constants $A$ and $B$ are called a lower and upper bounds of $(C,C^{'})-$controlled $K-g-$fusion frame, respectively. If the left-hand inequality of \eqref{eq10}  is an equality, we say that $\Lambda_{CC^{'}}$ is a tight $(C,C^{'})-$controlled $K-g-$fusion frame for $H$.
		\end{definition}
		\begin{remark}
			If $\Lambda_{CC^{'}}$ is a $(C,C^{'})-$controlled $K-g-$fusion frame for $H$ with bounds $A$ and $B$ we have 
			\begin{equation}\label{eq11}
				AKK^{\ast}\leq S_{(C,C^{'})} \leq BI_{H.}
			\end{equation}
		\end{remark}
		From  equality \eqref{eq12} and inequality \eqref{eq11} we have  
		\begin{proposition}\label{1}
			Let $K\in End_{\mathcal{A}}^{\ast}(H)$, and $\Lambda_{CC^{'}}$ be a $(C,C^{'})-$controlled $g-$fusion bessel sequence for $H$. Then $\Lambda_{CC^{'}}$ is a $(C,C^{'})-$controlled $K-g-$fusion frame for $H$ if and only if there exist a constant $A>0$ such that $AKK^{\ast}\leq S_{(C,C^{'})}$ where $S_{(C,C^{'})}$ is the frame operator for $\Lambda_{CC^{'}}$. 
		\end{proposition}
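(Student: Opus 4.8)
The plan is to treat the statement as a straightforward dictionary translation between the scalar (module-valued) frame inequality \eqref{eq10} and the operator inequality, with the Bessel hypothesis disposing of the upper bound once and for all. First I would record that, since $\Lambda_{CC^{'}}$ is a $(C,C^{'})-$controlled $g-$fusion Bessel sequence, there already exists $B>0$ with $\sum_{j\in J}v_{j}^{2}\langle \Lambda_{j}P_{W_{j}}Cf,\Lambda_{j}P_{W_{j}}C^{'}f\rangle\leq B\langle f,f\rangle$ for all $f\in H$; hence $\Lambda_{CC^{'}}$ is a $(C,C^{'})-$controlled $K-g-$fusion frame exactly when the left-hand inequality of \eqref{eq10} holds, i.e. when there is $A>0$ such that $A\langle K^{\ast}f,K^{\ast}f\rangle\leq \sum_{j\in J}v_{j}^{2}\langle \Lambda_{j}P_{W_{j}}Cf,\Lambda_{j}P_{W_{j}}C^{'}f\rangle$ for every $f\in H$.

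Next I would rewrite both sides in terms of sesquilinear forms of adjointable operators. For the right-hand side I invoke \eqref{eq12}, namely $\sum_{j\in J}v_{j}^{2}\langle \Lambda_{j}P_{W_{j}}Cf,\Lambda_{j}P_{W_{j}}C^{'}f\rangle=\langle S_{(C,C^{'})}f,f\rangle$, which is available under the standing commutativity assumptions that make $S_{(C,C^{'})}$ a well-defined positive self-adjoint operator. For the left-hand side I simply use $\langle K^{\ast}f,K^{\ast}f\rangle=\langle KK^{\ast}f,f\rangle$. The lower inequality then reads $\langle AKK^{\ast}f,f\rangle\leq\langle S_{(C,C^{'})}f,f\rangle$ for all $f\in H$, which is precisely $AKK^{\ast}\leq S_{(C,C^{'})}$ in the order on self-adjoint elements of $End_{\mathcal{A}}^{\ast}(H)$, since $KK^{\ast}$ and $S_{(C,C^{'})}$ are both positive and self-adjoint and an element $T\in End_{\mathcal{A}}^{\ast}(H)$ is positive iff $\langle Tf,f\rangle\geq 0$ for all $f$.

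Combining the two observations settles both directions simultaneously: if $\Lambda_{CC^{'}}$ is a $(C,C^{'})-$controlled $K-g-$fusion frame, the lower bound gives $AKK^{\ast}\leq S_{(C,C^{'})}$ (this is exactly inequality \eqref{eq11}); conversely, given $A>0$ with $AKK^{\ast}\leq S_{(C,C^{'})}$, the pair $(A,B)$, with $B$ the Bessel bound, furnishes frame bounds in \eqref{eq10}. I do not expect a genuine obstacle here; the only point requiring a word of care is the passage from the module-valued inequality to the operator inequality, which is legitimate precisely because of the positivity characterization just mentioned and because \eqref{eq12} identifies the middle term of \eqref{eq10} with the quadratic form of $S_{(C,C^{'})}$.
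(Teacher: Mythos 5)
Your proposal is correct and matches the paper's intent exactly: the paper offers no written proof, stating only that the proposition follows ``from equality \eqref{eq12} and inequality \eqref{eq11}'', which is precisely the translation you carry out — identifying the middle term of \eqref{eq10} with $\langle S_{(C,C^{'})}f,f\rangle$ and using the fact that an adjointable operator is positive iff its quadratic form is positive, with the Bessel bound supplying the upper inequality. Nothing further is needed.
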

		\begin{theorem}
			Let $\Lambda_{CC^{'}}=\{W_{j},\Lambda_{j},v_{j}\}_{j\in J}$ and $\Gamma_{CC^{'}}=\{V_{j},\Gamma_{j},u_{j}\}_{j\in J}$ be two  $(C,C^{'})-$controlled $g-$fusion bessel sequences for $H$ with bounds $B_{1}$ and $B_{2}$, respectively. Suppose that $T_{\Lambda_{CC^{'}}}$ and $T_{\Gamma_{CC^{'}}}$ be their synthesis operators such that $T_{\Gamma_{CC^{'}}}T_{\Lambda_{CC^{'}}}^{\ast}=K^{\ast}$ for some $K\in End_{\mathcal{A}}^{\ast}(H)$. Then, both $\Lambda_{CC^{'}}$ and $\Gamma_{CC^{'}}$ are $(C,C^{'})-$controlled $K$ and $K^{\ast}-g-$fusion frames for $H$, respectively. 
		\end{theorem}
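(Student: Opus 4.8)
The plan is to derive both assertions from Proposition \ref{1}, which characterises, among $(C,C^{\ast})$-controlled $g$-fusion Bessel sequences, those that are $(C,C^{'})$-controlled $K$-$g$-fusion frames by the single operator inequality $AKK^{\ast}\leq S_{(C,C^{'})}$. Since $\Lambda_{CC^{'}}$ and $\Gamma_{CC^{'}}$ are assumed to be $(C,C^{'})$-controlled $g$-fusion Bessel sequences, the upper frame bounds are already available (namely $B_{1}$ and $B_{2}$), so only the lower inequalities remain to be produced.

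First I would record the two standard factorisations of the frame operators through the synthesis operators, valid under the standing commutativity hypotheses on $C$, $C^{'}$: $S_{\Lambda_{CC^{'}}}=T_{\Lambda_{CC^{'}}}T_{\Lambda_{CC^{'}}}^{\ast}$ and $S_{\Gamma_{CC^{'}}}=T_{\Gamma_{CC^{'}}}T_{\Gamma_{CC^{'}}}^{\ast}$. Taking adjoints in the hypothesis $T_{\Gamma_{CC^{'}}}T_{\Lambda_{CC^{'}}}^{\ast}=K^{\ast}$ gives $T_{\Lambda_{CC^{'}}}T_{\Gamma_{CC^{'}}}^{\ast}=K$. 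For $\Lambda_{CC^{'}}$ I would then write $KK^{\ast}=T_{\Lambda_{CC^{'}}}T_{\Gamma_{CC^{'}}}^{\ast}T_{\Gamma_{CC^{'}}}T_{\Lambda_{CC^{'}}}^{\ast}$; because $\Gamma_{CC^{'}}$ is Bessel with bound $B_{2}$, the lemma $\langle Tf,Tf\rangle\leq\|T\|^{2}\langle f,f\rangle$ applied to $T_{\Gamma_{CC^{'}}}$ yields $T_{\Gamma_{CC^{'}}}^{\ast}T_{\Gamma_{CC^{'}}}\leq\|T_{\Gamma_{CC^{'}}}\|^{2}I\leq B_{2}I$, and conjugating this inequality by $T_{\Lambda_{CC^{'}}}$ gives $KK^{\ast}\leq B_{2}\,T_{\Lambda_{CC^{'}}}T_{\Lambda_{CC^{'}}}^{\ast}=B_{2}S_{\Lambda_{CC^{'}}}$, i.e. $\tfrac{1}{B_{2}}KK^{\ast}\leq S_{\Lambda_{CC^{'}}}$. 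Proposition \ref{1} then shows $\Lambda_{CC^{'}}$ is a $(C,C^{'})$-controlled $K$-$g$-fusion frame for $H$, with lower bound $1/B_{2}$ and upper bound $B_{1}$.

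The argument for $\Gamma_{CC^{'}}$ is symmetric: $K^{\ast}K=T_{\Gamma_{CC^{'}}}T_{\Lambda_{CC^{'}}}^{\ast}T_{\Lambda_{CC^{'}}}T_{\Gamma_{CC^{'}}}^{\ast}\leq\|T_{\Lambda_{CC^{'}}}\|^{2}T_{\Gamma_{CC^{'}}}T_{\Gamma_{CC^{'}}}^{\ast}\leq B_{1}S_{\Gamma_{CC^{'}}}$, so $\tfrac{1}{B_{1}}(K^{\ast})(K^{\ast})^{\ast}\leq S_{\Gamma_{CC^{'}}}$, and applying Proposition \ref{1} with $K^{\ast}$ in the role of $K$ gives that $\Gamma_{CC^{'}}$ is a $(C,C^{'})$-controlled $K^{\ast}$-$g$-fusion frame for $H$, with lower bound $1/B_{1}$ and upper bound $B_{2}$.

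The computations are short; the only points that deserve care are (i) that an operator inequality $P\leq Q$ in $End_{\mathcal{A}}^{\ast}$ is preserved under conjugation, $RPR^{\ast}\leq RQR^{\ast}$ for any adjointable $R$, which is what turns the Bessel bound into the desired lower bound for $S_{(C,C^{'})}$, and (ii) checking that the identity $S_{(C,C^{'})}=T_{(C,C^{'})}T_{(C,C^{'})}^{\ast}$ is being used in the exact form established earlier under the commutativity conventions on $C$ and $C^{'}$. I expect (i) to be the main thing to pin down explicitly, but it is routine for adjointable operators on Hilbert $C^{\ast}$-modules.
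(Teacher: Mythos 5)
Your proposal is correct and is essentially the paper's own argument: the paper performs the same computation pointwise, writing $\langle K^{\ast}f,K^{\ast}f\rangle=\langle T_{\Gamma_{CC^{'}}}T_{\Lambda_{CC^{'}}}^{\ast}f,T_{\Gamma_{CC^{'}}}T_{\Lambda_{CC^{'}}}^{\ast}f\rangle\leq\|T_{\Gamma_{CC^{'}}}\|^{2}\langle T_{\Lambda_{CC^{'}}}^{\ast}f,T_{\Lambda_{CC^{'}}}^{\ast}f\rangle\leq B_{2}\sum_{j\in J}v_{j}^{2}\langle\Lambda_{j}P_{W_{j}}Cf,\Lambda_{j}P_{W_{j}}C^{'}f\rangle$, which is exactly your operator inequality $KK^{\ast}\leq B_{2}S_{\Lambda_{CC^{'}}}$ evaluated at $f$. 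Your repackaging through Proposition \ref{1} and conjugation of $T_{\Gamma_{CC^{'}}}^{\ast}T_{\Gamma_{CC^{'}}}\leq B_{2}I$ is only a cosmetic difference, and the bounds $1/B_{2}$ and $1/B_{1}$ you obtain match the paper's.
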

		\begin{proof}
			For each $f\in H$, we have
			\begin{align*}
				\langle K^{\ast}f,K^{\ast}f\rangle=\langle T_{\Gamma_{CC^{'}}}T_{\Lambda_{CC^{'}}}^{\ast}f,T_{\Gamma_{CC^{'}}}T_{\Lambda_{CC^{'}}}^{\ast}f\rangle &\leq ||T_{\Gamma_{CC^{'}}}||^{2}\langle T_{\Lambda_{CC^{'}}}^{\ast},T_{\Lambda_{CC^{'}}}^{\ast}f\rangle\\&\leq B_{2}\sum_{j\in J}v_{j}^{2}\langle \Lambda_{j}P_{W_{j}}Cf,\Lambda_{j}P_{W_{j}}C^{'}f\rangle,
			\end{align*}
			Hence
			\begin{equation*}
				B_{2}^{-1}	\langle K^{\ast}f,K^{\ast}f\rangle\leq\sum_{j\in J}v_{j}^{2}\langle \Lambda_{j}P_{W_{j}}Cf,\Lambda_{j}P_{W_{j}}C^{'}f\rangle.
			\end{equation*}
			This means that $\Lambda_{CC^{'}}$ is a  $(C,C^{'})-$controlled $K-g-$fusion frame for $H$. Similarly, $\Gamma_{CC^{'}}$ is a $(C,C^{'})-$controlled $K^{\ast}-g-$fusion frame for $H$ with the lower bound $B_{1}^{-1}$.
		\end{proof}
		\begin{theorem} Let $U\in End_{\mathcal{A}}^{\ast}(H)$ be an invertible operator on $H$ and $\Lambda_{CC^{'}}=\{W_{j},\Lambda_{j},v_{j}\}_{j\in J}$ be a $(C,C^{'})-$controlled $K-g-$fusion frame for $H$ for some $K\in End_{\mathcal{A}}^{\ast}(H)$. Suppose that $U^{\ast}UW_{j}\subset W_{j}$, $\forall j\in J$ and $C$, $C^{'}$ commute with $U$. Then $\Gamma_{CC^{'}}=\{UW_{j}, \Lambda_{j}P_{W_{j}}U^{\ast},v_{j}\}_{j\in J}$ is a  $(C,C^{'})-$controlled $UKU^{\ast}-g-$fusion frame for $H$.
		\end{theorem}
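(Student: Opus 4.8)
The plan is to verify the defining inequality \eqref{eq10} for the family $\Gamma_{CC^{'}}=\{UW_{j},\Lambda_{j}P_{W_{j}}U^{\ast},v_{j}\}_{j\in J}$ with controlling operator $UKU^{\ast}$, by transporting the corresponding estimate for $\Lambda_{CC^{'}}$ through the substitution $f\mapsto U^{\ast}f$. First I would invoke Lemma \ref{9} with $T=U$: since $U$ is invertible and $U^{\ast}UW_{j}\subset W_{j}$ for every $j\in J$, the submodules $UW_{j}$ are orthogonally complemented and closed, so the projections $P_{UW_{j}}$ (and hence the frame sum for $\Gamma_{CC^{'}}$) are well defined, and moreover $P_{W_{j}}U^{\ast}=P_{W_{j}}U^{\ast}P_{UW_{j}}$ for each $j$. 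Reading this identity in reverse gives $P_{W_{j}}U^{\ast}P_{UW_{j}}=P_{W_{j}}U^{\ast}$, so for every $f\in H$ and each $j$,
\begin{equation*}
(\Lambda_{j}P_{W_{j}}U^{\ast})P_{UW_{j}}Cf=\Lambda_{j}\bigl(P_{W_{j}}U^{\ast}P_{UW_{j}}\bigr)Cf=\Lambda_{j}P_{W_{j}}U^{\ast}Cf=\Lambda_{j}P_{W_{j}}C\,U^{\ast}f,
\end{equation*}
where in the last step I use that $C$, being self-adjoint and commuting with $U$, also commutes with $U^{\ast}$; the analogous identity holds with $C^{'}$ in place of $C$.

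Substituting these identities, for all $f\in H$ the frame sum for $\Gamma_{CC^{'}}$ becomes $\sum_{j\in J}v_{j}^{2}\langle\Lambda_{j}P_{W_{j}}C(U^{\ast}f),\Lambda_{j}P_{W_{j}}C^{'}(U^{\ast}f)\rangle$, which is precisely the middle term of \eqref{eq10} for $\Lambda_{CC^{'}}$ evaluated at the vector $U^{\ast}f$. Hence that inequality yields
\begin{equation*}
A\langle K^{\ast}U^{\ast}f,K^{\ast}U^{\ast}f\rangle\leq\sum_{j\in J}v_{j}^{2}\bigl\langle(\Lambda_{j}P_{W_{j}}U^{\ast})P_{UW_{j}}Cf,(\Lambda_{j}P_{W_{j}}U^{\ast})P_{UW_{j}}C^{'}f\bigr\rangle\leq B\langle U^{\ast}f,U^{\ast}f\rangle.
\end{equation*}

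It then remains to rewrite the two outer terms. For the right-hand side, the estimate $\langle Tg,Tg\rangle\leq\|T\|^{2}\langle g,g\rangle$ (valid for adjointable $T$) gives $\langle U^{\ast}f,U^{\ast}f\rangle\leq\|U\|^{2}\langle f,f\rangle$. For the left-hand side, $(UKU^{\ast})^{\ast}=UK^{\ast}U^{\ast}$, and the same estimate gives $\langle UK^{\ast}U^{\ast}f,UK^{\ast}U^{\ast}f\rangle\leq\|U\|^{2}\langle K^{\ast}U^{\ast}f,K^{\ast}U^{\ast}f\rangle$, that is, $\langle K^{\ast}U^{\ast}f,K^{\ast}U^{\ast}f\rangle\geq\|U\|^{-2}\langle(UKU^{\ast})^{\ast}f,(UKU^{\ast})^{\ast}f\rangle$. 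Combining, $\Gamma_{CC^{'}}$ satisfies \eqref{eq10} with controlling operator $UKU^{\ast}$ and bounds $A\|U\|^{-2}$ and $B\|U\|^{2}$, the lower one being positive because $U$ is invertible; this is exactly the claim.

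I expect the only delicate point to be the projection bookkeeping: Lemma \ref{9} must be applied in exactly the direction that lets the middle projection $P_{UW_{j}}$ be absorbed, and one must carefully distinguish $(UKU^{\ast})^{\ast}=UK^{\ast}U^{\ast}$ from the $K^{\ast}U^{\ast}$ that the original frame inequality produces — bridging these two is precisely where the factor $\|U\|^{-2}$ enters. As an alternative packaging, the adjoint of the same projection identity gives $P_{UW_{j}}UP_{W_{j}}=UP_{W_{j}}$, whence the frame operator of $\Gamma_{CC^{'}}$ equals $U\,S_{(C,C^{'})}\,U^{\ast}$ with $S_{(C,C^{'})}$ the frame operator of $\Lambda_{CC^{'}}$; Proposition \ref{1} together with the operator inequality $AKK^{\ast}\leq S_{(C,C^{'})}$ then delivers the lower bound directly.
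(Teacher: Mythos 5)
Your proposal is correct and follows essentially the same route as the paper: both absorb the projection $P_{UW_{j}}$ via Lemma \ref{9}, rewrite the frame sum of $\Gamma_{CC^{'}}$ as the frame sum of $\Lambda_{CC^{'}}$ evaluated at $U^{\ast}f$ (using the commutation of $C$, $C^{'}$ with $U$), and then obtain the bounds $A\|U\|^{-2}$ and $B\|U\|^{2}$ from $\langle U^{\ast}f,U^{\ast}f\rangle\leq\|U\|^{2}\langle f,f\rangle$ and $\langle UK^{\ast}U^{\ast}f,UK^{\ast}U^{\ast}f\rangle\leq\|U\|^{2}\langle K^{\ast}U^{\ast}f,K^{\ast}U^{\ast}f\rangle$. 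Your closing remark justifying the commutation of $C$ with $U^{\ast}$ and the alternative packaging via the frame operator are welcome additions, but the core argument coincides with the paper's.
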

		\begin{proof} 
			Since $\Lambda_{CC^{'}}$ is a  $(C,C^{'})-$controlled $K-g-$fusion frame for $H$, $\exists$ $A,B > 0$	such that  
			\begin{equation*}
				A\langle K^{\ast}f,K^{\ast}f \rangle \leq \sum_{j\in J}v_{j}^{2}\langle \Lambda_{j}P_{W_{j}}Cf,\Lambda_{j}P_{W_{j}}C^{'}f \rangle \leq B\langle f, f \rangle, \qquad \forall f \in H. 
			\end{equation*} 
			Also, $U$ is an invertible linear operator on $H$, so for any $j \in J$, $UW_{j}$ is closed in $H$. Now, for each $f\in H$, using lemma \ref{9}, we obtain
			\begin{align*}
				\sum_{j\in J}v_{j}^{2}\langle \Lambda_{j}P_{W_{j}}U^{\ast}P_{UW_{j}}Cf,\Lambda_{j}P_{W_{j}}U^{\ast}P_{UW_{j}}C^{'}f \rangle &= \sum_{j\in J}v_{j}^{2} \langle \Lambda_{j}P_{W_{j}}U^{\ast}Cf, \Lambda_{j}P_{W_{j}}U^{\ast}C^{'}f \rangle\\&=\sum_{j\in J}v_{j}^{2} \langle \Lambda_{j}P_{W_{j}}CU^{\ast}f, \Lambda_{j}P_{W_{j}}C^{'}U^{\ast}f \rangle 
				\\&\leq B \langle U^{\ast}f, U^{\ast}f \rangle \\&\leq B||U||^{2}\langle f, f \rangle.  
			\end{align*}
			On the other hand, for each $f\in H$ 
			\begin{align*}
				A\langle (UKU^{\ast})^{\ast}f, (UKU^{\ast})^{\ast}f \rangle &=A \langle UK^{\ast}U^{\ast}f, UK^{\ast}U^{\ast}f \rangle \\&\leq A||U||^{2} \langle K^{\ast}U^{\ast}f, K^{\ast}U^{\ast}f \rangle
				\\& \leq ||U||^{2} \sum_{j\in J}v_{j}^{2}\langle \Lambda_{j}P_{W_{j}}C(U^{\ast}f),\Lambda_{j}P_{W_{j}}C^{'}(U^{\ast}f) \rangle\\&=||U||^{2} \sum_{j\in J}v_{j}^{2}\langle \Lambda_{j}P_{W_{j}}U^{\ast}Cf,\Lambda_{j}P_{W_{j}}U^{\ast}C^{'}f \rangle \\&\leq ||U||^{2} \sum_{j\in J}v_{j}^{2}\langle \Lambda_{j}P_{W_{j}}U^{\ast}P_{UW_{j}}Cf,\Lambda_{j}P_{W_{j}}U^{\ast}P_{UW_{j}}C^{'}f \rangle,
			\end{align*}
			Then 
			\begin{equation*}
				\frac{A}{||U||^{2}} \langle (UKU^{\ast})^{\ast}f, (UKU^{\ast})^{\ast}f \rangle \leq \sum_{j\in J}v_{j}^{2}\langle \Lambda_{j}P_{W_{j}}U^{\ast}P_{UW_{j}}Cf,\Lambda_{j}P_{W_{j}}U^{\ast}P_{UW_{j}}C^{'}f\rangle
			\end{equation*}
			Therefore, $\Gamma_{CC^{'}}$ is a $(C,C^{'})-$controlled $UKU^{\ast}-g-$fusion frame for $H$.
		\end{proof}
		\begin{theorem}
			Let  $U\in End_{\mathcal{A}}^{\ast}(H)$ be an invertible operator on $H$ and  $\Gamma_{CC^{'}}=\{UW_{j}, \Lambda_{j}P_{W_{j}}U^{\ast},v_{j}\}_{j\in J}$ be a $(C,C^{'})-$controlled $K-g-$fusion frame for $H$ for some  $K\in End_{\mathcal{A}}^{\ast}(H)$. Suppose that $U^{\ast}UW_{j}\subset W_{j}$, $\forall j\in J$ and $C$, $C^{'}$ commute with $U$. Then  $\Lambda_{CC^{'}}=\{W_{j},\Lambda_{j},v_{j}\}_{j\in J}$ is a $(C,C^{'})-$controlled $U^{-1}KU-g-$fusion frame for $H$. 
		\end{theorem}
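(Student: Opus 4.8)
The plan is to mirror the computation of the preceding theorem and run it ``backwards''. Start from the defining inequalities of $\Gamma_{CC^{'}}=\{UW_{j},\Lambda_{j}P_{W_{j}}U^{\ast},v_{j}\}_{j\in J}$ as a $(C,C^{'})-$controlled $K-g-$fusion frame: there exist $A,B>0$ with
\begin{equation*}
A\langle K^{\ast}g,K^{\ast}g\rangle\leq\sum_{j\in J}v_{j}^{2}\langle \Lambda_{j}P_{W_{j}}U^{\ast}P_{UW_{j}}Cg,\Lambda_{j}P_{W_{j}}U^{\ast}P_{UW_{j}}C^{'}g\rangle\leq B\langle g,g\rangle,\qquad \forall g\in H.
\end{equation*}
Since $U$ is invertible and $U^{\ast}UW_{j}\subset W_{j}$, Lemma \ref{9} applies with $T=U$ and yields $P_{W_{j}}U^{\ast}=P_{W_{j}}U^{\ast}P_{UW_{j}}$; hence the middle term collapses to $\sum_{j\in J}v_{j}^{2}\langle \Lambda_{j}P_{W_{j}}U^{\ast}Cg,\Lambda_{j}P_{W_{j}}U^{\ast}C^{'}g\rangle$. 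Because $C,C^{'}\in GL^{+}(H)$ are self-adjoint and commute with $U$, taking adjoints of $CU=UC$ shows they also commute with $U^{\ast}$, so this equals $\sum_{j\in J}v_{j}^{2}\langle \Lambda_{j}P_{W_{j}}CU^{\ast}g,\Lambda_{j}P_{W_{j}}C^{'}U^{\ast}g\rangle$.

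Next I substitute $g=(U^{\ast})^{-1}f=(U^{-1})^{\ast}f$, which is legitimate since $U^{\ast}$ is invertible and as $f$ ranges over $H$ so does $g$. The middle term then becomes exactly the quantity $\sum_{j\in J}v_{j}^{2}\langle \Lambda_{j}P_{W_{j}}Cf,\Lambda_{j}P_{W_{j}}C^{'}f\rangle$ attached to $\Lambda_{CC^{'}}$, and the inequalities read
\begin{equation*}
A\langle K^{\ast}(U^{\ast})^{-1}f,K^{\ast}(U^{\ast})^{-1}f\rangle\leq\sum_{j\in J}v_{j}^{2}\langle \Lambda_{j}P_{W_{j}}Cf,\Lambda_{j}P_{W_{j}}C^{'}f\rangle\leq B\langle (U^{\ast})^{-1}f,(U^{\ast})^{-1}f\rangle.
\end{equation*}
For the upper estimate I bound $\langle (U^{\ast})^{-1}f,(U^{\ast})^{-1}f\rangle\leq\|U^{-1}\|^{2}\langle f,f\rangle$. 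For the lower estimate I use the adjoint identity $(U^{-1}KU)^{\ast}=U^{\ast}K^{\ast}(U^{\ast})^{-1}$ together with $\langle Tx,Tx\rangle\leq\|T\|^{2}\langle x,x\rangle$ to get $\langle (U^{-1}KU)^{\ast}f,(U^{-1}KU)^{\ast}f\rangle\leq\|U\|^{2}\langle K^{\ast}(U^{\ast})^{-1}f,K^{\ast}(U^{\ast})^{-1}f\rangle$, so that $\frac{A}{\|U\|^{2}}\langle (U^{-1}KU)^{\ast}f,(U^{-1}KU)^{\ast}f\rangle\leq A\langle K^{\ast}(U^{\ast})^{-1}f,K^{\ast}(U^{\ast})^{-1}f\rangle$. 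Combining the two estimates gives
\begin{equation*}
\frac{A}{\|U\|^{2}}\langle (U^{-1}KU)^{\ast}f,(U^{-1}KU)^{\ast}f\rangle\leq\sum_{j\in J}v_{j}^{2}\langle \Lambda_{j}P_{W_{j}}Cf,\Lambda_{j}P_{W_{j}}C^{'}f\rangle\leq B\|U^{-1}\|^{2}\langle f,f\rangle,\qquad \forall f\in H,
\end{equation*}
which is precisely the assertion that $\Lambda_{CC^{'}}$ is a $(C,C^{'})-$controlled $U^{-1}KU-g-$fusion frame for $H$, with bounds $A/\|U\|^{2}$ and $B\|U^{-1}\|^{2}$.

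The only delicate points are: justifying that $C,C^{'}$ commute with $U^{\ast}$ (immediate from self-adjointness of $C,C^{'}$ and $CU=UC$, $C^{'}U=UC^{'}$ by taking adjoints), and the correct application of Lemma \ref{9}, which requires $UW_{j}$ to be orthogonally complemented — this is part of the hypothesis that $\Gamma_{CC^{'}}$ is a frame, and is in any case guaranteed by Lemma \ref{9} itself under $U^{\ast}UW_{j}\subset W_{j}$. Everything else is routine bookkeeping with adjoints and operator norms, entirely parallel to the preceding theorem; I do not expect any genuine obstacle. (One could instead try to invoke the previous theorem with $U$ replaced by $U^{-1}$, but the hypothesis there is phrased in terms of the submodules $W_{j}$ rather than $UW_{j}$, so the direct computation above is cleaner.)
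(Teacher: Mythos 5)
Your argument is correct and follows essentially the same route as the paper's own proof: both apply the frame inequality for $\Gamma_{CC^{'}}$ at the vector $(U^{\ast})^{-1}f=(U^{-1})^{\ast}f$, use Lemma \ref{9} to drop $P_{UW_{j}}$, commute $C,C^{'}$ past $U^{\ast}$, and obtain the same bounds $A/\|U\|^{2}$ and $B\|U^{-1}\|^{2}$. The only difference is cosmetic (you substitute first and then specialize, while the paper plugs $(U^{-1})^{\ast}f$ in directly), and your explicit justification that $C,C^{'}$ commute with $U^{\ast}$ is a point the paper uses silently.
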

		\begin{proof}
			Since $\Gamma_{CC^{'}}=\{UW_{j},\Lambda_{j}P_{W_{j}}U^{\ast},v_{j}\}_{j\in J}$ is a  $(C,C^{'})-$controlled $K-g-$fusion frame for $H$, $\exists$ $A$, $B>0$ such that
			\begin{equation*}
				A\langle K^{\ast}f,K^{\ast}f\rangle \leq \sum_{j\in J}v_{j}^{2}\langle \Lambda_{j}P_{W_{j}}U^{\ast}P_{UW_{j}}Cf,\Lambda_{j}P_{W_{j}}U^{\ast}P_{UW_{j}}C^{'}f\rangle \leq B\langle f,f \rangle.\qquad\forall f\in H.
			\end{equation*}
			Let $f \in H$, we have 
			\begin{align*}
				A\langle (U^{-1}KU)^{\ast}f,(U^{-1}KU)^{\ast}f \rangle &=A\langle U^{\ast}K^{\ast}(U^{-1})^{\ast}f,U^{\ast}K^{\ast}(U^{-1})^{\ast}f\rangle  
				\\&\leq A||U||^{2}\langle K^{\ast}(U^{-1})^{\ast}f,K^{\ast}(U^{-1})^{\ast}f\rangle \\&\leq ||U||^{2}  \sum_{j\in J}v_{j}^{2}\langle \Lambda_{j}P_{W_{j}}U^{\ast}P_{UW_{j}}C(U^{-1})^{\ast}f,\Lambda_{j}P_{W_{j}}U^{\ast}P_{UW_{j}}C^{'}(U^{-1})^{\ast}f \rangle 
				\\&\leq ||U||^{2}  \sum_{j\in J}v_{j}^{2}\langle \Lambda_{j}P_{W_{j}}U^{\ast}C(U^{-1})^{\ast}f,\Lambda_{j}P_{W_{j}}U^{\ast}C^{'}(U^{-1})^{\ast}f \rangle \\&= ||U||^{2}  \sum_{j\in J}v_{j}^{2}\langle \Lambda_{j}P_{W_{j}}U^{\ast}(U^{-1})^{\ast}Cf,\Lambda_{j}P_{W_{j}}U^{\ast}(U^{-1})^{\ast}C^{'}f \rangle \\&=  ||U||^{2}  \sum_{j\in J}v_{j}^{2}\langle \Lambda_{j}P_{W_{j}}Cf,\Lambda_{j}P_{W_{j}}C^{'}f \rangle.  
			\end{align*}
			Then, for each $f\in H$, we have  
			\begin{equation*}
				\frac{A}{||U||^{2}}\langle (U^{-1}KU)^{\ast}f,(U^{-1}KU)^{\ast}f \rangle \leq \sum_{j\in \mathrm{J}}v_{j}^{2}\langle \Lambda_{j}P_{W_{j}}Cf,\Lambda_{j}P_{W_{j}}C^{'}f \rangle.  
			\end{equation*}
			Also, for each $f\in H$, we have 
			\begin{align*}
				\sum_{j\in J}v_{j}^{2}\langle \Lambda_{j}P_{W_{j}}Cf,\Lambda_{j}P_{W_{j}}C^{'}f \rangle &= \sum_{j\in J}v_{j}^{2}\langle \Lambda_{j}P_{W_{j}}CU^{\ast}(U^{-1})^{\ast}f,\Lambda_{j}P_{W_{j}}C^{'}U^{\ast}(U^{-1})^{\ast}f \rangle  
				\\&=\sum_{j\in J}v_{j}^{2}\langle \Lambda_{j}P_{W_{j}}U^{\ast}C(U^{-1})^{\ast}f,\Lambda_{j}P_{W_{j}}U^{\ast}C^{'}(U^{-1})^{\ast}f \rangle \\&=\sum_{j\in J}v_{j}^{2}\langle \Lambda_{j}P_{W_{j}}U^{\ast}P_{UW_{j}}C(U^{-1})^{\ast}f,\Lambda_{j}P_{W_{j}}U^{\ast}P_{UW_{j}}C^{'}(U^{-1})^{\ast}f \rangle \\&\leq B\langle (U^{-1})^{\ast}f,(U^{-1})^{\ast}f \rangle \\&\leq B ||U^{-1}||^{2}\langle f,f \rangle.  
			\end{align*}
			Thus, $\Lambda_{CC^{'}}$ is a $(C,C^{'})-$controlled $U^{-1}KU-g-$fusion frame for $H$.
		\end{proof}
		\begin{theorem}\label{10}
			Let  $K\in End_{\mathcal{A}}^{\ast}(H)$ be an invertible operator on $H$ and $\Lambda_{CC^{'}}=\{W_{j},\Lambda_{j},v_{j}\}_{j\in J}$ be a  $(C,C^{'})-$controlled $g-$fusion frame for $H$ with frame bounds $A$, $B$ and $S_{(C,C^{'})}$ be the associated $(C,C^{'})-$controlled $g-$fusion frame operator. Suppose that for all $j\in J$, $T^{\ast}TW_{j}\subset W_{j}$, where $T=KS_{(C,C^{'})}^{-1}$ and $C$, $C^{'}$ commute with $T$. Then $\{KS_{(C,C^{'})}^{-1}W_{j},\Lambda_{j}P_{W_{j}}S_{(C,C^{'})}^{-1}K^{\ast},v_{j}\}_{j\in J}$ is a $(C,C^{'})-$controlled $K-g-$fusion frame for $H$ with the corresponding  $(C,C^{'})-$controlled $g-$fusion frame operator $KS_{(C,C^{'})}^{-1}K^{\ast}$.
		\end{theorem}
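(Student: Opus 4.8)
The plan is to set $T:=KS_{(C,C^{'})}^{-1}$; since $K$ is invertible and $S_{(C,C^{'})}$ is positive, self-adjoint and invertible (by the frame-operator lemma of Section 3), $T$ is an invertible element of $End_{\mathcal{A}}^{\ast}(H)$ with $T^{\ast}=S_{(C,C^{'})}^{-1}K^{\ast}$, so that $TW_{j}=KS_{(C,C^{'})}^{-1}W_{j}$ and $\Lambda_{j}P_{W_{j}}T^{\ast}=\Lambda_{j}P_{W_{j}}S_{(C,C^{'})}^{-1}K^{\ast}$. First I would apply Lemma \ref{9} to this $T$: the hypotheses $T^{\ast}TW_{j}\subset W_{j}$ and invertibility of $T$ give that $\{TW_{j}\}_{j\in J}$ is a family of orthogonally complemented closed submodules, together with the identity $P_{W_{j}}T^{\ast}=P_{W_{j}}T^{\ast}P_{TW_{j}}$ and its adjoint $P_{TW_{j}}TP_{W_{j}}=TP_{W_{j}}$. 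I would also note that, since $C$ and $C^{'}$ are self-adjoint and commute with $T$, they commute with $T^{\ast}$ as well; combined with the standing hypothesis that they commute with $P_{W_{j}}\Lambda_{j}^{\ast}\Lambda_{j}P_{W_{j}}$, this is exactly what is needed for the new system $\Gamma:=\{TW_{j},\Lambda_{j}P_{W_{j}}T^{\ast},v_{j}\}_{j\in J}$ to admit a controlled frame operator.

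Next, for fixed $f\in H$, I would simplify the controlled $g$-fusion sum for $\Gamma$. Using $(\Lambda_{j}P_{W_{j}}T^{\ast})P_{TW_{j}}=\Lambda_{j}(P_{W_{j}}T^{\ast}P_{TW_{j}})=\Lambda_{j}P_{W_{j}}T^{\ast}$ from Lemma \ref{9}, and then commuting $C,C^{'}$ past $T^{\ast}$, the sum reduces to
\[
\sum_{j\in J}v_{j}^{2}\langle \Lambda_{j}P_{W_{j}}CT^{\ast}f,\Lambda_{j}P_{W_{j}}C^{'}T^{\ast}f\rangle=\langle S_{(C,C^{'})}(T^{\ast}f),T^{\ast}f\rangle,
\]
by \eqref{eq12}. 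Applying the frame inequalities $AI_{H}\leq S_{(C,C^{'})}\leq BI_{H}$ to the vector $T^{\ast}f$ gives at once the upper estimate $\langle S_{(C,C^{'})}T^{\ast}f,T^{\ast}f\rangle\leq B\langle T^{\ast}f,T^{\ast}f\rangle\leq B\|T\|^{2}\langle f,f\rangle$. For the lower estimate I would write $T^{\ast}f=S_{(C,C^{'})}^{-1}K^{\ast}f$, so that $\langle T^{\ast}f,T^{\ast}f\rangle=\langle S_{(C,C^{'})}^{-2}K^{\ast}f,K^{\ast}f\rangle$, and then use $S_{(C,C^{'})}^{-2}\geq\|S_{(C,C^{'})}\|^{-2}I_{H}\geq B^{-2}I_{H}$ (which follows from Lemma \ref{3}(i) applied to $S_{(C,C^{'})}^{-1}$, or directly from $S_{(C,C^{'})}\leq BI_{H}$) to obtain $\langle S_{(C,C^{'})}T^{\ast}f,T^{\ast}f\rangle\geq AB^{-2}\langle K^{\ast}f,K^{\ast}f\rangle$. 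Hence $\Gamma$ is a $(C,C^{'})$-controlled $K$-$g$-fusion frame with bounds $AB^{-2}$ and $B\|T\|^{2}$.

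To identify the frame operator of $\Gamma$, I would expand, for $f\in H$,
\[
S_{\Gamma}f=\sum_{j\in J}v_{j}^{2}C^{'}P_{TW_{j}}(\Lambda_{j}P_{W_{j}}T^{\ast})^{\ast}(\Lambda_{j}P_{W_{j}}T^{\ast})P_{TW_{j}}Cf=\sum_{j\in J}v_{j}^{2}C^{'}P_{TW_{j}}TP_{W_{j}}\Lambda_{j}^{\ast}\Lambda_{j}P_{W_{j}}T^{\ast}P_{TW_{j}}Cf,
\]
then apply $P_{TW_{j}}TP_{W_{j}}=TP_{W_{j}}$ and $P_{W_{j}}T^{\ast}P_{TW_{j}}=P_{W_{j}}T^{\ast}$, and push $C^{'}$ through $T$ and $C$ through $T^{\ast}$, which yields
\[
S_{\Gamma}f=T\Bigl(\sum_{j\in J}v_{j}^{2}C^{'}P_{W_{j}}\Lambda_{j}^{\ast}\Lambda_{j}P_{W_{j}}C\,T^{\ast}f\Bigr)=TS_{(C,C^{'})}T^{\ast}f=KS_{(C,C^{'})}^{-1}S_{(C,C^{'})}S_{(C,C^{'})}^{-1}K^{\ast}f=KS_{(C,C^{'})}^{-1}K^{\ast}f,
\]
which is the asserted frame operator.

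I expect the main obstacle to be the bookkeeping with the projections onto the translated submodules $TW_{j}$: one must be careful to insert Lemma \ref{9}'s identity $P_{W_{j}}T^{\ast}=P_{W_{j}}T^{\ast}P_{TW_{j}}$ (and its adjoint) at exactly the right places so that both the controlled sums and the operator $S_{\Gamma}$ genuinely collapse to expressions in $S_{(C,C^{'})}$, and to verify that the standing commutativity assumptions carry over to $\Gamma$ — which they do because $C,C^{'}$ commute with $T$, $T^{\ast}$ and with $P_{W_{j}}\Lambda_{j}^{\ast}\Lambda_{j}P_{W_{j}}$. The estimates themselves are then routine applications of the lemmas in Section 2.
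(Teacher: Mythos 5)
Your proposal is correct and follows essentially the same route as the paper's proof: reduce the controlled sum for the transformed system to $\langle S_{(C,C^{'})}T^{\ast}f,T^{\ast}f\rangle$ via Lemma \ref{9} and the commutation hypotheses, apply the frame bounds of $\Lambda_{CC^{'}}$ at the vector $T^{\ast}f=S_{(C,C^{'})}^{-1}K^{\ast}f$, and compute the new frame operator as $TS_{(C,C^{'})}T^{\ast}=KS_{(C,C^{'})}^{-1}K^{\ast}$. The only cosmetic difference is that you get the lower bound $AB^{-2}$ from $S_{(C,C^{'})}^{-2}\geq B^{-2}I_{H}$, whereas the paper writes the equivalent estimate $\langle K^{\ast}f,K^{\ast}f\rangle\leq\|S_{(C,C^{'})}\|^{2}\langle S_{(C,C^{'})}^{-1}K^{\ast}f,S_{(C,C^{'})}^{-1}K^{\ast}f\rangle$.
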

		\begin{proof}
			We now $T=KS_{(C,C^{'})}^{-1}$ is invertible on $H$ and $T^{\ast}=(KS_{(C,C^{'})}^{-1})^{\ast}=S_{(C,C^{'})}^{-1}K^{\ast}$. For each $f \in H$, we have 
			\begin{align*}
				\langle K^{\ast}f,K^{\ast}f \rangle &= \langle S_{(C,C^{'})}S_{(C,C^{'})}^{-1}K^{\ast}f,S_{(C,C^{'})}S_{(C,C^{'})}^{-1}K^{\ast}f \rangle \\&\leq ||S_{(C,C^{'})}||^{2} \langle S_{(C,C^{'})}^{-1}K^{\ast}f,S_{(C,C^{'})}^{-1}K^{\ast}f\rangle 
				\\&\leq B^{2}\langle S_{(C,C^{'})}^{-1}K^{\ast}f,S_{(C,C^{'})}^{-1}K^{\ast}f\rangle. 
			\end{align*}
			Now for each $f \in H$, we get
			\begin{align*}
				\sum_{j\in J}v_{j}^{2}\langle \Lambda_{j}P_{W_{j}}T^{\ast}P_{TW_{j}}C(f),\Lambda_{j}P_{W_{j}}T^{\ast}P_{TW_{j}}C^{'}(f) \rangle &=\sum_{j\in J}v_{j}^{2}\langle \Lambda_{j}P_{W_{j}}T^{\ast}C(f),\Lambda_{j}P_{W_{j}}T^{\ast}C^{'}(f) \rangle 
				\\&=\sum_{j\in J}v_{j}^{2}\langle \Lambda_{j}P_{W_{j}}CT^{\ast}(f),\Lambda_{j}P_{W_{j}}C^{'}T^{\ast}(f) \rangle\\&\leq B\langle T^{\ast}f,T^{\ast}f\rangle \\&\leq B ||T||^{2}\langle f,f\rangle \\&\leq B ||S_{(C,C^{'})}^{-1}||^{2}||K||^{2}\langle f,f\rangle \\&\leq \frac{B}{A^{2}}||K||^{2}\langle f,f\rangle. 
			\end{align*}
			On the other hand, for each $f\in H$, we have
			\begin{align*}
				\sum_{j\in J}v_{j}^{2}\langle \Lambda_{j}P_{W_{j}}T^{\ast}P_{TW_{j}}C(f),\Lambda_{j}P_{W_{j}}T^{\ast}P_{TW_{j}}C^{'}(f) \rangle   &=\sum_{j\in J}v_{j}^{2}\langle \Lambda_{j}P_{W_{j}}T^{\ast}C(f),\Lambda_{j}P_{W_{j}}T^{\ast}C^{'}(f) \rangle 
				\\&=\sum_{j\in J}v_{j}^{2}\langle \Lambda_{j}P_{W_{j}}CT^{\ast}(f),\Lambda_{j}P_{W_{j}}C^{'}T^{\ast}(f) \rangle\\&\geq A \langle T^{\ast}f, T^{\ast}f \rangle \\&= A \langle S_{(C,C^{'})}^{-1}K^{\ast}f,S_{(C,C^{'})}^{-1}K^{\ast}f \rangle \\&\geq \frac{A}{B^{2}}\langle K^{\ast}f,K^{\ast}f \rangle. 
			\end{align*}
			Thus $\{KS_{(C,C^{'})}^{-1}W_{j},\Lambda_{j}P_{W_{j}}S_{(C,C^{'})}^{-1}K^{\ast},v_{j}\}_{j\in J}$ is a $(C,C^{'})-$controlled $K-g-$fusion frame for $H$.\\
			For each $f\in H$, we have 
			\begin{align*}
				\sum_{j\in J}v_{j}^{2}C^{'}P_{TW_{j}}(\Lambda_{j}P_{W_{j}}T^{\ast})^{\ast}(\Lambda_{j}P_{W_{j}}T^{\ast})P_{TW_{j}}Cf &= \sum_{j\in J}v_{j}^{2}C^{'}P_{TW_{j}}TP_{W_{j}}\Lambda_{j}^{\ast}(\Lambda_{j}P_{W_{j}}T^{\ast})P_{TW_{j}}Cf
				\\&=\sum_{j\in \mathrm{J}}v_{j}^{2}C^{'}(P_{W_{j}}T^{\ast}P_{TW_{j}})^{\ast}\Lambda_{j}^{\ast}\Lambda_{j}(P_{W_{j}}T^{\ast}P_{TW{j}})Cf \\&= \sum_{j\in J}v_{j}^{2}C^{'}TP_{W_{j}}\Lambda_{j}^{\ast}\Lambda_{j}P_{W_{j}}T^{\ast}Cf \\&=\sum_{j\in J}v_{j}^{2}TC^{'}P_{W_{j}}\Lambda_{j}^{\ast}\Lambda_{j}P_{W_{j}}CT^{\ast}f\\&= T(\sum_{j\in J}v_{j}^{2}C^{'}P_{W_{j}}\Lambda_{j}^{\ast}\Lambda_{j}P_{W_{j}}CT^{\ast}f) 
				\\&= TS_{(C,C^{'})}T^{\ast}(f) = KS_{(C,C^{'})}^{-1}K^{\ast}(f).
			\end{align*}
			This implies that $KS_{(C,C^{'})}^{-1}K^{\ast}$ is the associated $(C,C^{'})-$controlled $g-$fusion frame operator.
		\end{proof}
		
		The next theorem we give an equivqlent definition of $(C,C^{'})-$controlled $K-g-$fusion frame.
		\begin{theorem}\label{8}
			Let $K\in End_{\mathcal{A}}^{\ast}(H)$. Then $\Lambda_{CC^{'}}$ is a $(C,C^{'})-$controlled $K-g-$fusion frame for $H$ if and only if there exist constants $A$, $B>0$ such that
			\begin{equation}\label{eq14}
				A||K^{\ast}f||^{2}\leq ||\sum_{j\in J}v_{j}^{2}\langle \Lambda_{j}P_{W_{j}}Cf,\Lambda_{j}P_{W_{j}}C^{'}f\rangle||\leq B||f||^{2},\qquad\forall f\in H.
			\end{equation} 
		\end{theorem}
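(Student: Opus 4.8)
The plan is to mimic the proof of Theorem~\ref{2}, but to replace the invertibility step (which is unavailable for a genuine $K$) by the majorization Lemma~\ref{5}. For the forward implication I would start from \eqref{eq10}: under the standing commutativity hypotheses the middle term equals $\langle S_{(C,C^{'})}f,f\rangle=\sum_{j\in J}v_{j}^{2}\langle\Lambda_{j}P_{W_{j}}(CC^{'})^{\frac{1}{2}}f,\Lambda_{j}P_{W_{j}}(CC^{'})^{\frac{1}{2}}f\rangle$, a positive element of $\mathcal{A}$, so \eqref{eq10} reads $A\langle K^{\ast}f,K^{\ast}f\rangle\leq\langle S_{(C,C^{'})}f,f\rangle\leq B\langle f,f\rangle$. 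Applying the $C^{\ast}-$norm, which is monotone on the positive cone, together with $\|\langle K^{\ast}f,K^{\ast}f\rangle\|=\|K^{\ast}f\|^{2}$ and $\|\langle f,f\rangle\|=\|f\|^{2}$, immediately yields \eqref{eq14} with the same constants.

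For the converse I would first exploit the upper estimate in \eqref{eq14}, namely $\|\langle S_{(C,C^{'})}f,f\rangle\|\leq B\|f\|^{2}$ for all $f\in H$: arguing as in the proof that the synthesis operator $T_{(C,C^{'})}$ is bounded, this makes $T_{(C,C^{'})}$ and $T_{(C,C^{'})}^{\ast}$ bounded, so that $\Lambda_{CC^{'}}$ is a $(C,C^{'})-$controlled $g-$fusion bessel sequence and $S_{(C,C^{'})}=T_{(C,C^{'})}T_{(C,C^{'})}^{\ast}$ is a well-defined bounded positive self-adjoint operator, whence $S_{(C,C^{'})}^{\frac{1}{2}}$ is available. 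At this point Proposition~\ref{1} reduces the task to exhibiting a constant $A^{'}>0$ with $A^{'}KK^{\ast}\leq S_{(C,C^{'})}$.

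To produce such an $A^{'}$ I would rewrite, for each $f\in H$, $\|\langle S_{(C,C^{'})}f,f\rangle\|=\|\langle S_{(C,C^{'})}^{\frac{1}{2}}f,S_{(C,C^{'})}^{\frac{1}{2}}f\rangle\|=\|S_{(C,C^{'})}^{\frac{1}{2}}f\|^{2}$, so that the lower estimate in \eqref{eq14} turns into $\sqrt{A}\,\|K^{\ast}f\|\leq\|S_{(C,C^{'})}^{\frac{1}{2}}f\|$ for all $f\in H$. Then I would apply Lemma~\ref{5} with $E=L=H$, $T=S_{(C,C^{'})}^{\frac{1}{2}}$ (self-adjoint, so $T^{\ast}=T$) and $T^{'}=K$: statement~(2) holds with $\mu=1/\sqrt{A}$, so statement~(1) provides $\lambda>0$ with $KK^{\ast}=T^{'}(T^{'})^{\ast}\leq\lambda\,TT^{\ast}=\lambda\,S_{(C,C^{'})}$. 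Hence $\lambda^{-1}KK^{\ast}\leq S_{(C,C^{'})}$, and since $\Lambda_{CC^{'}}$ is already a bessel sequence, Proposition~\ref{1} finishes the proof that it is a $(C,C^{'})-$controlled $K-g-$fusion frame for $H$.

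I expect the last step to be the only real obstacle: an inequality between the $C^{\ast}-$norms of two positive elements does not by itself give the corresponding order inequality, so one cannot pass directly from $A\|K^{\ast}f\|^{2}\leq\|\langle S_{(C,C^{'})}f,f\rangle\|$ to $AKK^{\ast}\leq S_{(C,C^{'})}$. Rewriting the right-hand side as $\|S_{(C,C^{'})}^{\frac{1}{2}}f\|^{2}$ and feeding the resulting norm-domination into Lemma~\ref{5} is exactly the device that upgrades it to an operator inequality; the commutativity hypotheses on $C$ and $C^{'}$ enter only, as in the earlier results, to guarantee that $S_{(C,C^{'})}$ is positive and self-adjoint so that $S_{(C,C^{'})}^{\frac{1}{2}}$ exists.
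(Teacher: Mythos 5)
Your proof is correct and follows essentially the same route as the paper's: the paper also dismisses the forward direction by monotonicity of the norm and, for the converse, first establishes boundedness/adjointability from the upper estimate and then invokes Lemma \ref{5} to upgrade the norm domination into the operator inequality $KK^{\ast}\leq\lambda S_{(C,C^{'})}$. The only cosmetic difference is that you apply Lemma \ref{5} to $S_{(C,C^{'})}^{\frac{1}{2}}$ while the paper applies it to the synthesis/analysis operator $T$ (with $T^{\ast}T=S_{(C,C^{'})}$ and $\|Tf\|=\|S_{(C,C^{'})}^{\frac{1}{2}}f\|$), which amounts to the same thing.
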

		\begin{proof}
			Evidently, every $(C,C^{'})-$controlled $K-g-$fusion frame for $H$ satisfies \eqref{eq14}.\\
			For the converse, we suppose that \eqref{eq14} holds. 
			For any $\{f_{j}\}_{j\in J}\in l^{2}(\{H_{j}\}_{j\in J})$,
			\begin{align*}
				|| \sum_{j\in J}v_{j}(CC^{'})^{\frac{1}{2}}P_{W_{j}}\Lambda_{j}^{\ast}f_{j}||&=\sup_{||g||=1}||\langle \sum_{j\in J}v_{j}(CC^{'})^{\frac{1}{2}}P_{W_{j}}\Lambda_{j}^{\ast}f_{j},g\rangle||\\&=\sup_{||g||=1}||\sum_{j\in J}\langle v_{j}(CC^{'})^{\frac{1}{2}}P_{W_{j}}\Lambda_{j}^{\ast}f_{j},g\rangle||\\&=\sup_{||g||=1}||\sum_{j\in J}\langle f_{j}, v_{j}\Lambda_{j}P_{W_{j}}(CC^{'})^{\frac{1}{2}}g\rangle||\\&\leq \sup_{||g||=1}||\sum_{j\in J}\langle f_{j}, f_{j}\rangle||^{\frac{1}{2}}||\sum_{j\in J}v_{j}^{2}\langle\Lambda_{j}P_{W_{j}}(CC^{'})^{\frac{1}{2}}g, \Lambda_{j}P_{W_{j}}(CC^{'})^{\frac{1}{2}}g\rangle||^{\frac{1}{2}}\\&= \sup_{||g||=1}||\sum_{j\in J}\langle f_{j}, f_{j}\rangle||^{\frac{1}{2}}||\sum_{j\in J}v_{j}^{2}\langle\Lambda_{j}P_{W_{j}}Cg, \Lambda_{j}P_{W_{j}}C^{'}g\rangle||^{\frac{1}{2}}\\&\leq \sup_{||g||=1}||\sum_{j\in J}\langle f_{j}, f_{j}\rangle||^{\frac{1}{2}}\sqrt{B}||g||=\sqrt{B}||\{f_{j}\}_{j\in J}||.
			\end{align*}
			Thus the series $ \sum_{j\in J}v_{j}(CC^{'})^{\frac{1}{2}}P_{W_{j}}\Lambda_{j}^{\ast}f_{j}$ converges in $H$ unconditionally. Since 
			\begin{equation*}
				\langle Tf, \{f_{j}\}_{j\in J}\rangle=\sum_{j\in J}\langle v_{j}\Lambda_{j}P_{W_{j}}(CC^{'})^{\frac{1}{2}}f,f_{j}\rangle=\langle f,\sum_{j\in J}v_{j}(CC^{'})^{\frac{1}{2}}P_{W_{j}}\Lambda_{j}^{\ast}f_{j}\rangle.
			\end{equation*}
			$T$ is adjointable. Now for each $f\in H$ we have
			\begin{equation*}
				\langle Tf,Tf\rangle=\sum_{j\in J}v_{j}^{2}\langle \Lambda_{j}P_{W_{j}}Cf,\Lambda_{j}P_{W_{j}}C^{'}f\rangle\leq ||T||^{2}\langle f,f\rangle.
			\end{equation*}
			On the other hand the left-hand inequality of \eqref{eq14} gives
			\begin{equation*}
				||K^{\ast}f||^{2}\leq \frac{1}{A}||Tf||^{2},\qquad\forall f\in H.
			\end{equation*}
			Then the lemma \ref{5} implies that there exist a constant $\mu >0$ such that 
			\begin{equation*}
				KK^{\ast}\leq \mu T^{\ast}T,
			\end{equation*}
			And hence 
			\begin{equation*}
				\frac{1}{\mu}\langle K^{\ast}f,K^{\ast}f\rangle\leq \langle Tf,Tf\rangle=\sum_{j\in J}v_{j}^{2}\langle \Lambda_{j}P_{W_{j}}Cf,\Lambda_{j}P_{W_{j}}C^{'}f\rangle,\qquad\forall f\in H.
			\end{equation*}
			Consequently, $\Lambda_{CC^{'}}$ is a $(C,C^{'})-$controlled $K-g-$fusion frame for $H$.
		\end{proof}
		\section{perturbation of $(C,C^{'})-$controlled $K-g-$fusion frame in Hilbert $C^{\ast}-$modules}
		\begin{theorem}
			Let $\Lambda_{CC^{'}}=\{W_{j},\Lambda_{j},v_{j}\}_{j\in J}$ be a $(C,C^{'})-$controlled $K-g-$fusion frame for $H$ with frame bounds $A$, $B$ and $\Gamma_{j}\in End_{\mathcal{A}}^{\ast}(H,H_{j})$. Suppose that for each $f\in H$,
				\begin{align*}
					||((v_{j}\Lambda_{j}P_{W_{j}}-u_{j}\Gamma_{j}P_{V_{j}})(CC^{'})^{\frac{1}{2}}f)_{j\in\mathrm{J}}||\leq &\lambda_{1}||(v_{j}\Lambda_{j}P_{W_{j}}(CC^{'})^{\frac{1}{2}}f)_{j\in\mathrm{J}}||+\\&\lambda_{2}||(u_{j}\Gamma_{j}P_{V_{j}}(CC^{'})^{\frac{1}{2}}f)_{j\in\mathrm{J}}||+\epsilon||K^{\ast}f||.
				\end{align*} 
				where $0<\lambda_{1},\lambda_{2}<1$ and $\epsilon >0$ such that $\epsilon < (1-\lambda_{1})\sqrt{A}$.
		
			Then $\{W_{j},\Gamma_{j},u_{j}\}_{j\in J}$ is a $(C,C^{'})-$controlled $K-g-$fusion frame for $H$.
		\end{theorem}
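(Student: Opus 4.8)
The plan is to verify the norm-inequality characterization of Theorem~\ref{8} for the perturbed family $\{V_j,\Gamma_j,u_j\}_{j\in J}$ and to read off explicit frame bounds from the perturbation hypothesis. For $f\in H$ write
\begin{equation*}
	a(f)=\big\|\big(v_j\Lambda_jP_{W_j}(CC')^{\frac12}f\big)_{j\in J}\big\|,\qquad b(f)=\big\|\big(u_j\Gamma_jP_{V_j}(CC')^{\frac12}f\big)_{j\in J}\big\|,
\end{equation*}
and let $c(f)$ denote the left-hand side of the hypothesis. Exactly as in the computations around \eqref{eq12}, the commutativity assumptions give $a(f)^2=\big\|\sum_{j\in J}v_j^2\langle\Lambda_jP_{W_j}Cf,\Lambda_jP_{W_j}C'f\rangle\big\|$ and $b(f)^2=\big\|\sum_{j\in J}u_j^2\langle\Gamma_jP_{V_j}Cf,\Gamma_jP_{V_j}C'f\rangle\big\|$, while the hypothesis reads $c(f)\le\lambda_1a(f)+\lambda_2b(f)+\epsilon\|K^\ast f\|$. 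Since $\Lambda_{CC'}$ is a $(C,C')$-controlled $K$-$g$-fusion frame with bounds $A,B$, Theorem~\ref{8} yields $\sqrt A\,\|K^\ast f\|\le a(f)\le\sqrt B\,\|f\|$ for all $f$.

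First I would establish the upper (Bessel) estimate. Since $(u_j\Gamma_jP_{V_j}(CC')^{\frac12}f)_j$ differs from $(v_j\Lambda_jP_{W_j}(CC')^{\frac12}f)_j$ by the sequence whose norm is $c(f)$, the triangle inequality in $l^2(\{H_j\}_{j\in J})$ gives $b(f)\le a(f)+c(f)\le(1+\lambda_1)a(f)+\lambda_2b(f)+\epsilon\|K^\ast f\|$; using $a(f)\le\sqrt B\,\|f\|$ and $\|K^\ast f\|\le\|K\|\,\|f\|$ and solving for $b(f)$,
\begin{equation*}
	b(f)\le\frac{(1+\lambda_1)\sqrt B+\epsilon\|K\|}{1-\lambda_2}\,\|f\|ᅟ,
\end{equation*}
which gives the upper bound $B'=\big((1+\lambda_1)\sqrt B+\epsilon\|K\|\big)^2(1-\lambda_2)^{-2}$. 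For the lower bound I would argue symmetrically: $a(f)\le b(f)+c(f)\le\lambda_1a(f)+(1+\lambda_2)b(f)+\epsilon\|K^\ast f\|$, so $(1-\lambda_1)a(f)\le(1+\lambda_2)b(f)+\epsilon\|K^\ast f\|$, and feeding in $a(f)\ge\sqrt A\,\|K^\ast f\|$ gives $\big((1-\lambda_1)\sqrt A-\epsilon\big)\|K^\ast f\|\le(1+\lambda_2)b(f)$. Here the standing hypothesis $\epsilon<(1-\lambda_1)\sqrt A$ is used precisely to make the coefficient on the left strictly positive, so that
\begin{equation*}
	b(f)\ge\frac{(1-\lambda_1)\sqrt A-\epsilon}{1+\lambda_2}\,\|K^\ast f\|,
\end{equation*}
i.e.\ a genuine lower bound $A'=\big((1-\lambda_1)\sqrt A-\epsilon\big)^2(1+\lambda_2)^{-2}>0$.

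Squaring the two displayed inequalities and translating back through the identity for $b(f)^2$ yields $A'\|K^\ast f\|^2\le\big\|\sum_{j\in J}u_j^2\langle\Gamma_jP_{V_j}Cf,\Gamma_jP_{V_j}C'f\rangle\big\|\le B'\|f\|^2$ for every $f\in H$, so Theorem~\ref{8} applies and $\{V_j,\Gamma_j,u_j\}_{j\in J}$ is a $(C,C')$-controlled $K$-$g$-fusion frame for $H$ with bounds $A',B'$. The only points that require care are the two triangle-inequality chains — one must insert $c(f)$ on the correct side each time so that the coefficients $1+\lambda_1$ and $1+\lambda_2$ (rather than $1-\lambda_i$) land where they belong — and the identification of the $\mathcal A$-valued sums with the squared $l^2$-norms $a(f)^2,b(f)^2$, which rests on $C,C'$ commuting with each other and with the relevant $P_{W_j}\Lambda_j^\ast\Lambda_jP_{W_j}$, $P_{V_j}\Gamma_j^\ast\Gamma_jP_{V_j}$, exactly as in Section~3; everything else is bookkeeping.
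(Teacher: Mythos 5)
Your argument is correct and is essentially the paper's own proof: the same add-and-subtract triangle-inequality decomposition in $l^{2}(\{H_{j}\}_{j\in J})$ for the upper and lower estimates, the same use of the norm characterization of Theorem~\ref{8} in both directions, and the same resulting bounds $\big((1-\lambda_{1})\sqrt{A}-\epsilon\big)^{2}(1+\lambda_{2})^{-2}$ and $\big((1+\lambda_{1})\sqrt{B}+\epsilon\|K\|\big)^{2}(1-\lambda_{2})^{-2}$. Your explicit $a(f),b(f),c(f)$ notation and the remark on where the hypothesis $\epsilon<(1-\lambda_{1})\sqrt{A}$ is needed only make the same computation cleaner.
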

		\begin{proof}
			We have for each $f\in\mathcal{H}$ 
			\begin{align*}
				||\sum_{j\in \mathrm{J}}u_{j}^{2}\langle \Gamma_{j}P_{V_{j}}Cf,\Gamma_{j}P_{V_{j}}C^{'}f\rangle||^{\frac{1}{2}}&=||(u_{j}\Gamma_{j}P_{V_{j}}(CC^{'})^{\frac{1}{2}}f)_{j\in J}||\\&=||(u_{j}\Gamma_{j}P_{V_{j}}(CC^{'})^{\frac{1}{2}}f)_{j\in J}+(v_{j}\Lambda_{j}P_{W_{j}}(CC^{'})^{\frac{1}{2}}f)_{j\in J}\\&\qquad\qquad-(v_{j}\Lambda_{j}P_{W_{j}}(CC^{'})^{\frac{1}{2}}f)_{j\in J}||\\&\leq ||((u_{j}\Gamma_{j}P_{V_{j}}-v_{j}\Lambda_{j}P_{W_{j}})(CC^{'})^{\frac{1}{2}}f)_{j\in J}||\\&\qquad\qquad+||(v_{j}\Lambda_{j}P_{W_{j}}(CC^{'})^{\frac{1}{2}}f)_{j\in J}||\\&\leq (\lambda_{1}+1)||(v_{j}\Lambda_{j}P_{W_{j}}(CC^{'})^{\frac{1}{2}}f)_{j\in J}||\\&\qquad\qquad+\lambda_{2}||(u_{j}\Gamma_{j}P_{V_{j}}(CC^{'})^{\frac{1}{2}}f)_{j\in J}||+\epsilon||K^{\ast}f||.
			\end{align*}
			So 
			\begin{align*}
				(1-\lambda_{2})||(u_{j}\Gamma_{j}P_{V_{j}}(CC^{'})^{\frac{1}{2}}f)_{j\in J}||\leq (\lambda_{1}+1)\sqrt{B}||f||+\epsilon||K^{\ast}f||.
			\end{align*}
			Then 
			\begin{align*}
				||(u_{j}\Gamma_{j}P_{V_{j}}(CC^{'})^{\frac{1}{2}}f)_{j\in J}||&\leq \frac{(\lambda_{1}+1)\sqrt{B}||f||+\epsilon||K^{\ast}f||}{1-\lambda_{2}}\\&\leq (\frac{(\lambda_{1}+1)\sqrt{B}+\epsilon||K||}{1-\lambda_{2}})||f||.
			\end{align*} 
			Hence 
			\begin{equation*}
				||\sum_{j\in \mathrm{J}}u_{j}^{2}\langle \Gamma_{j}P_{V_{j}}Cf,\Gamma_{j}P_{V_{j}}C^{'}f\rangle||\leq  (\frac{(\lambda_{1}+1)\sqrt{B}+\epsilon||K||}{1-\lambda_{2}})^{2}||f||^{2}. 
			\end{equation*}
			On the other hand for each $f\in\mathcal{H}$
			\begin{align*}
				||\sum_{j\in \mathrm{J}}u_{j}^{2}\langle \Gamma_{j}P_{V_{j}}Cf,\Gamma_{j}P_{V_{j}}C^{'}f\rangle||^{\frac{1}{2}}&=||(u_{j}\Gamma_{j}P_{V_{j}}(CC^{'})^{\frac{1}{2}}f)_{j\in J}||\\&=||((u_{j}\Gamma_{j}P_{V_{j}}-v_{j}\Lambda_{j}P_{W_{j}})(CC^{'})^{\frac{1}{2}}f)_{j\in J}\\&\qquad\qquad+(v_{j}\Lambda_{j}P_{W_{j}}(CC^{'})^{\frac{1}{2}}f)_{j\in J}||\\&\geq||(v_{j}\Lambda_{j}P_{W_{j}}(CC^{'})^{\frac{1}{2}}f)_{j\in J}||\\&\qquad\qquad- ||((u_{j}\Gamma_{j}P_{V_{j}}-v_{j}\Lambda_{j}P_{W_{j}})(CC^{'})^{\frac{1}{2}}f)_{j\in J}||\\&\geq (1-\lambda_{1})||(v_{j}\Lambda_{j}P_{W_{j}}(CC^{'})^{\frac{1}{2}}f)_{j\in J}||\\&\qquad\qquad-\lambda_{2}||(u_{j}\Gamma_{j}P_{V_{j}}(CC^{'})^{\frac{1}{2}}f)_{j\in J}||-\epsilon||K^{\ast}f||.
			\end{align*}
			Hence 
			\begin{equation*}
				||\sum_{j\in \mathrm{J}}u_{j}^{2}\langle \Gamma_{j}P_{V_{j}}Cf,\Gamma_{j}P_{V_{j}}C^{'}f\rangle||\geq (\frac{(1-\lambda_{1})\sqrt{A}-\epsilon}{1+\lambda_{2}})^{2}||K^{\ast}f||^{2}.
			\end{equation*}
			By theorem \ref{8}, we conclude that $\{V_{j},\Gamma_{j},u_{j}\}_{j\in J}$ is a $(C,C^{'})-$controlled $K-g-$fusion frame for $H$.
		\end{proof}
		
		\bibliographystyle{amsplain}

	\end{document}